\theoremstyle{plain}     
                         \newtheorem{theorem}             {Theorem}    [section]
\theoremstyle{definition}
\theoremstyle{plain}     
                         \newtheorem{lemma}      [theorem]{Lemma}
                         \newtheorem{proposition}[theorem]{Proposition}
\theoremstyle{remark}    
                         \newtheorem{remark}     [theorem]{Remark}
\newcommand{\seclabel}   [1]{\label{sec:#1}}
\newcommand{\Secref}    [1]{Section~\ref{sec:#1}}
\newcommand{\eqnlabel}    [1]{\label{eqn:#1}}
\newcommand{\skipequation}[1]{\stepcounter{equation}\expandafter\global\expandafter\edef\csname r@eqn:#1\endcsname{\theequation}}
\newcommand{\eqnref}{\@ifstar\eqnrefnbr\eqnrefbr}
\newcommand{\eqnrefbr} [1]{(\ref{eqn:#1})}
\newcommand{\eqnrefnbr}[1]{\ref{eqn:#1}}
\newcommand{\thmlabel} [1]{\label{thm:#1}}
\newcommand{\lemlabel} [1]{\label{lem:#1}}
\newcommand{\Thmref}  [1]{Theorem~\ref{thm:#1}}
\newcommand{\Lemref}  [1]{Lemma~\ref{lem:#1}}
\let\tmpepsilon\epsilon
\let\epsilon\varepsilon
\let\varepsilon\tmpepsilon
\newcommand{\bE}{\mathbb E}
\newcommand{\bF}{\mathbb F}
\newcommand{\bG}{\mathbb G}
\newcommand{\bH}{\mathbb H}
\newcommand{\bI}{\mathbb I}
\newcommand{\bR}{\mathbb R}
\newcommand{\cB}{{\mathcal B}}
\newcommand{\cE}{{\mathcal E}}
\newcommand{\cH}{{\mathcal H}}
\newcommand{\cL}{{\mathcal L}}
\newcommand{\cN}{{\mathcal N}}
\newcommand{\cR}{{\mathcal R}}
\newcommand{\cS}{{\mathcal S}}
\def\int{\intop\limits}
\newcommand{\narrowarray}{\setlength{\arraycolsep}{0.2em}}
\newcommand{\ba}{\begin{array}}
\newcommand{\ea}{\end{array}}
\newcommand{\E}{\mathbb{E}}
\newcommand{\F}{\mathbb{F}}
\newcommand{\G}{\mathbb{G}}
\newcommand{\I}{\mathbb{I}}
\newcommand{\N}{\mathbb{N}}
\newcommand{\R}{\mathbb{R}}
\newcommand{\U}{\mathbb{U}}
\newcommand{\eps}{\varepsilon}
\newcommand{\si}{\sigma}
\newcommand{\Ga}{\Gamma}
\newcommand{\Si}{\Sigma}
\newcommand{\Om}{\Omega}
\newcommand{\As}{A^{\scriptscriptstyle\Sigma}}
\newcommand{\ns}{\nu_{\scriptscriptstyle\Sigma}}
\newcommand{\cs}{c^{\scriptscriptstyle\Sigma}}
\newcommand{\csh}{\hat{c}^{\scriptscriptstyle\Sigma}}
\newcommand{\csb}{\bar{c}^{\scriptscriptstyle\Sigma}}
\newcommand{\ds}{d^{\scriptscriptstyle\Sigma}}
\newcommand{\fs}{f^{\scriptscriptstyle\Sigma}}
\newcommand{\gs}{g^{\scriptscriptstyle\Sigma}}
\newcommand{\us}{u^{\scriptscriptstyle\Sigma}}
\newcommand{\vs}{v^{\scriptscriptstyle\Sigma}}
\newcommand{\zs}{z^{\scriptscriptstyle\Sigma}}
\newcommand{\zsb}{\bar{z}^{\scriptscriptstyle\Sigma}}
\newcommand{\tgi}{\tilde{g}^{\scriptscriptstyle\Sigma}_-}
\newcommand{\hgi}{\hat{g}^{\scriptscriptstyle\Sigma}_-}
\newcommand{\hci}{\hat{c}^{\scriptscriptstyle\Sigma}_\text{in}}
\newcommand{\oc}{\bar{c}}
\renewcommand{\div}{\text{div\,}}
\newcommand{\pa}{\partial}
\title
	[Global Strong Solutions for Heterogeneous Catalysis Models]
	{Global Strong Solutions for a Class of \\ Heterogeneous Catalysis Models}
\author
	[Dieter Bothe]
	{Dieter Bothe}
\address
	{Center of Smart Interfaces and
	 Department of Mathematics \newline\indent
	 Technische Universit{\"a}t Darmstadt, \newline\indent
	 Alarich-Weiss-Str.~10, 64289 Darmstadt, Germany}
\email
	{bothe@csi.tu-darmstadt.de}
\author
	[Matthias K{\"o}hne]
	{Matthias K{\"o}hne}
\address
	{Mathematisches Institut \newline\indent
	 Heinrich-Heine-Universit{\"a}t D{\"u}sseldorf, \newline\indent
	 Universit{\"a}tsstr.~1, D-40225 D{\"u}sseldorf, Germany}
\email
	{koehne@math.uni-duesseldorf.de}
\author
	[Siegfried Maier]
	{Siegfried Maier}
\address
	{Mathematisches Institut \newline\indent
	 Heinrich-Heine-Universit{\"a}t D{\"u}sseldorf, \newline\indent
	 Universit{\"a}tsstr.~1, D-40225 D{\"u}sseldorf, Germany}
\email
	{maier@math.uni-duesseldorf.de}
\author
	[J{\"u}rgen Saal]
	{J{\"u}rgen Saal}
\address
	{Mathematisches Institut \newline\indent
	 Heinrich-Heine-Universit{\"a}t D{\"u}sseldorf, \newline\indent
	 Universit{\"a}tsstr.~1, D-40225 D{\"u}sseldorf, Germany}
\email
	{saal@math.uni-duesseldorf.de}
\keywords
	{global existence,
	 strong solution,
	 chemisorption,
	 diffusion-sorption-reaction system}
\subjclass
	[2010]
	{Primary: 35K57; Secondary: 35K55, 35R01, 80A32}
\date
	{\today}
\begin{document}
\setlength{\parskip}{0.5\baselineskip}
\setlength{\parindent}{0pt}
\renewcommand{\baselinestretch}{1.125}
\normalsize
\begin{abstract}
We consider a mathematical model for heterogeneous catalysis in a finite three-dimensional pore of cylinder-like geometry,
with the lateral walls acting as a catalytic surface.
The system under consideration consists of a diffusion-advection system inside the bulk phase and a
reaction-diffusion-sorption system modeling the processes on the catalytic wall and the exchange
between bulk and surface. We assume Fickian diffusion with constant coefficients, sorption kinetics 
with linear growth bound and a network of chemical reactions which possesses a certain triangular
structure. 
Our main result gives sufficient conditions for the existence of a unique global strong $L^2$-solution to this model,
thereby extending by now classical results on reaction-diffusion systems to the more complicated case of
heterogeneous catalysis.
\end{abstract}
\maketitle

\section*{Introduction}
Catalysis is a key technology in Chemical Engineering, employed not only to increase the speed of chemical
reactions by up to several orders of magnitude, but also to change the selectivity in favor of a desired product
against other possible output components of a chemical reaction network.
In heterogeneous catalysis, the catalytic substance forms a separate phase which is advantageous
concerning the separation of the products from the catalytic material. A prototypical setting,
which also underlies the mathematical model below, consists of a solid phase catalyst brought into contact with a gas or liquid which carries the educts as well as the product species inside the chemical reactor.
In this case, the overall chemical conversion consists of the following steps:
\begin{enumerate}
\item 
the educt species are transported to the surface of the catalytic substance;
\item 
molecules of at least one educt species adsorb at the catalyst surface;
\item 
adsorbed molecules react, either with other adsorbed molecules or
with molecules in the bulk phase directly adjacent to the surface;
\item 
the product molecules are desorbed.
\end{enumerate}
Of course, further processes will usually appear as well. For instance, adsorbed educt molecules may desorb
back into the bulk before a chemical reaction occurs, or they can be transported along the surface by means
of surface diffusion processes.
For a recent view on the complexity of heterogeneous catalysis modeling see \cite{Keil:Complexities}.

In the present paper, we only consider the case of pure surface chemistry, i.e.\
chemical reactions are only allowed between adsorbed species. This is actually no restriction, since one may
otherwise introduce an artificial adsorbed form of the reaction partner which is in the bulk adjacent to the surface and assign to it an infinite adsorption rate such that all arriving bulk molecules immediately adsorb
and, hence, are available for surface reaction.

In order for a heterogenous catalytic process to be efficient, a large surface area is required.
Therefore, in classical heterogeneous catalysis with solid phase catalyst, the latter is often provided as
a porous structure, e.g.\ in so-called packed-bed reactors. In this case, the smallest unit is a single
pore, into which the educts have to be transported in order to reach the pore wall, i.e.\ the catalytic surface.
More information on this classical reactor concept can be found, e.g., in \cite{Aris:Permeable-Catalysts},
\cite{Levenspiel:Chemical-Engineering} or \cite{White:HetCat}.
In recent years, with the advent of microreactor engineering technology, new reactor designs became feasible.
Due to the large area-to-volume ratio at the micro scale, multichannel microreactors with catalytic wall
coatings can replace classical porous structures and still provide fast and intense diffusive transport
to the channel walls in order to facilitate the reaction speed or selectivity enhancement; see, e.g.,
\cite{Wirth:Microreactors}, \cite{Renken:Catalytic-Microreactors}.
Since the given and precise structure of microreactors together with modern control and measurement techniques
allows for defined and reproducible operating conditions, this approach is much better accessible for
detailed quantitative modeling and simulation; cf.\ \cite{Bothe-Lojewski-Instantaneous},
\cite{Bothe-Lojewski-Parabolized}.
Structured catalytic microreactors are also employed for efficient screening of potential catalysts for
new reaction pathways; see, e.g., \cite{Jensen:HetCat}. To tap the full potential of such microsystems approaches and to intensify also more classical heterogeneous catalysis processes, realistic and sound mathematical models
are required as the basis for any numerical simulation. The most fundamental question then is whether a given
model is well-posed, a necessary requirement to enable any reasonable numerical treatment.

In what follows, we consider a single pore as a prototypical element, where we allow for convection
through the pore with a solenoidal velocity field which is assumed to be known and to satisfy the no-slip
boundary condition at fixed walls. We focus on pores having smoothly bounded cross shapes.
Let therefore $\Om:=A\times(-h,h)\subset \R^3$ denote a finite three-dimensional cylinder of height $2h>0$ with cross section $A\subset \R^2$ being a bounded simply connected $C^2$-domain, such that $\pa A$ is a closed regular $C^2$-curve. 
The boundary of $\Om$ decomposes into bottom $\Ga_\text{in}$, top $\Ga_\text{out}$ and lateral surface $\Si$, standing for inflow area, outflow area and active surface.
The mathematical model consists of the partial mass balances for all involved chemical components,
both within the bulk phase $\Om$ (representing the interior of the pore) and on the active surface $\Si$
(representing the catalytic surface). Inside the bulk phase, the species mass fluxes are due to
advection and diffusion, where we assume the latter to be governed by Fick's law.
On the active surface, we only consider diffusive fluxes along the surface, again assuming Fick's law
to be a reasonable constitutive relation. We allow for different diffusivities but the model ignores
cross-diffusion effects. Let us note in passing that for high surface coverage, cross-effects
between the transport of different constituents will appear which are not accounted for by our model.
The mass exchange between bulk and active surface is due to ad- and desorption phenomena, which are
usually modeled via kinetic relations in analogy to chemical reaction kinetics. Examples will be discussed below.

Insertion of the flux relations into the partial mass balances for continua yields the following mathematical
model for the unknown concentrations $(c_i,\cs_i)$ with $i=1,...,N$:
\begin{equation}\label{eq:cat}
\left\{\begin{array}{rclcl}
\pa_t c_i+(u\cdot \nabla) c_i- d_i\Delta c_i &=&0& \text{in}& (0,T)\times\Om, \\
\pa_t \cs_i - d^{\scriptscriptstyle\Si}_i \Delta_\Si \cs_i &=&r^\text{sorp}_i(c_i,\cs_i)+r^\text{ch}_i(\cs) & \text{on}& (0,T)\times \Si, \\
(u\cdot\nu)c_i-d_i\pa_\nu c_i&=&g^\text{in}_i& \text{on}& (0,T)\times \Ga_\text{in}, \\
-d_i\pa_\nu c_i &=&r^\text{sorp}_i(c_i,\cs_i)& \text{on}& (0,T)\times \Si,\\
-d_i\pa_\nu c_i &=& 0 & \text{on}& (0,T)\times \Ga_\text{out}, \\
-\ds_i\pa_{\nu_\Si} \cs_i &=&0 &\text{on}& (0,T)\times \pa \Si, \\
{c_i}|_{t=0}&=&c_{0,i}&\text{in}&\Om,\\
{\cs_i}|_{t=0}&=&\cs_{0,i} & \text{on}& \Si.
\end{array}\right.
\end{equation}
In \eqref{eq:cat}, the $d_i,\ds_i>0$ are given constant diffusivities, $u=u(t,x)$ denotes the velocity,
$r^\text{sorp}_i$ are the sorption rate functions, $r^\text{ch}_i$ the rates of molar mass production
due to surface chemistry, $g^\text{in}_i$ the inflow rates of molar mass, $c_{0,i}$ and $\cs_{0,i}$ the
initial concentrations in the bulk and on the active surface
and $\nu$ is the outer normal to $\Omega$.

{\bfseries ($\text{A}^\text{vel}$)}
Throughout this paper we assume that the velocity field satisfies
\begin{align}\label{eq:regularity-calss-velocity}
u \in \U^\Om_p(T):=W^{1,p}((0,T),L^p(\Om,\R^3))\cap L^p((0,T),W^{2,p}(\Om,\R^3)),
\end{align} for given time $T>0$. Moreover, we assume
\begin{align*} u\cdot \nu\leq 0 \quad \text{on}\; \Ga_\text{in}, \qquad u\cdot \nu =0\quad \text{on}\; \Si, \qquad u\cdot \nu\geq 0\quad \text{on}\;\Ga_\text{out},
\end{align*} and $\div u=0$ in the distributional sense.

\subsection*{Examples for sorption and reaction rates}
We give a few examples for sorption and reaction rate functions.

\begin{enumerate}
\item[(S1)] Let $k^\text{ad}_i,k^\text{de}_i>0$ denote adsorption and
desorption rate constants. The the simplest sorption rate is given by the linear {\it Henry law}, i.e.\
\begin{align*}
r^\text{sorp}_{H,i}(c_i,\cs_i)=k^\text{ad}_i c_i - k^\text{de}_i \cs_i.
\end{align*}
This law only applies for dilute systems.
\item[(S2)] For moderate concentrations, {\it Langmuir's law} given by
\begin{align*}
r^\text{sorp}_{L,i}(c_i,\cs_i)=k^\text{ad}_i c_i\left(1-\frac{\cs_i}{\cs_{\infty,i}}\right)-k^\text{de}_i\cs_i
\end{align*} may be employed. Here $\cs_{\infty,i}>0$ denotes the maximum capacity for adsorption of species $i$. 
In an application of our main results, we actually consider a modified version; see Remark \ref{rk:revisiting-examples}, which satisfies all of our assumptions on the sorption rate stated in Section \ref{sec:loc}.
\end{enumerate}

\begin{enumerate}
\item[(R1)] A standard example considers a reversible chemical reaction of type $A+B\rightleftharpoons P$
with $N=3$ components. If mass action kinetics is employed, the mass productions are governed by the rate function
\begin{align*}
r^\text{ch}(\cs)=\begin{pmatrix}
-k^\text{re}(\cs_1\cs_2-\kappa \cs_3)\\
-k^\text{re}(\cs_1\cs_2-\kappa \cs_3)\\
+k^\text{re}(\cs_1\cs_2-\kappa \cs_3) \end{pmatrix}.
\end{align*} Here $k^\text{re}>0$ denotes the rate constant of the
forward reaction, while $\kappa$ is the equilibrium constant for this reaction, determined as the ratio between forward and backward reaction rates.
\end{enumerate}

Due to the nonlinear coupling between bulk and surface in \eqref{eq:cat}, the extension of local and global
existence results from classical bulk reaction-diffusion systems (see \cite{Pie10} for a recent survey)
to the considered advection-diffusion-sorption-reaction system is not straightforward and there are only
few papers dealing with related models.
In \cite{Knabner-Otto}, a similar system but without chemical reactions has been studied. The authors have
shown that an $L^1$-contraction principle holds for the evolution operator and thereby weak solutions are unique. In \cite{Marpeau-Saad}, the case of fast sorption is considered in which local equilibrium between the
adsorbed concentration and the adjacent bulk concentration yields an algebraic relation between these quantities. Surface chemical reactions are not included.
As mentioned above, heterogeneous catalysis processes are often performed in porous media in which case
homogenization is a useful technique to obtain scale-reduced models. The mathematical details of such
a homogenization for periodic porous media have been worked out in \cite{Hornung}.
Another scale-reduced model of heterogeneous catalysis itself has been analyzed in \cite{Bothe:HetCat}
concerning the existence of time-periodic solutions.

\section{Main Results}\seclabel{results}

The main results of this paper are the local-in-time existence of a unique nonnegative strong $L^p$-solution
and the global-in-time existence of a unique nonnegative strong $L^2$-solution. 
In all sections, $(0,T)$ denotes a finite time interval. For a bounded simply connected $C^2$-domain $A\subset \R^2$, such that $\pa A$ is a closed regular $C^2$-curve, let $\Om:=A\times(-h,h)$ be a finite cylinder in $\R^3$.
The boundary of $\Om$ decomposes into three parts, which are
the lateral surface $\Si=\pa A\times(-h,h)$, an inflow area
$\Ga_\text{in}=A\times\{-h\}$ and an outflow area $\Ga_\text{out}=A\times\{h\}$, where
$\Ga_\text{in},\Ga_\text{out}$ are the bottom and the top of the cylinder $\Om$, respectively.
In particular, we have $\pa\Om=\Ga_\text{in}\cup\overline{\Si}\cup\Ga_\text{out}$.
The local existence result reads as follows. For the assumptions imposed on the sorption and reaction rates see the beginning of Section \ref{sec:loc}.

\begin{theorem}\thmlabel{Local-WP}(Local existence) Let $J=(0,T')\subset \R$ and $5/3<p<\infty$ with $p\neq 3$.
Suppose $u$ satisfies ($\text{A}^\text{vel}$), $r^\text{sorp}$ satisfies ($A_\text{F}^\text{sorp}$), ($A_\text{M}^\text{sorp}$), ($A_\text{B}^\text{sorp}$)
and $r^\text{ch}$ fulfills ($A^\text{ch}_\text{F}$), ($A^\text{ch}_\text{N}$), ($A^\text{ch}_\text{P}$).
Then for every set of data
\begin{align*}
g^\text{in}_i &\in W^{1/2-1/2p}_p(J,L^p(\Ga_\text{in}))\cap L^p(J,W^{1-1/p}_p(\Ga_\text{in})),\\
c_{0,i} &\in W^{2-2/p}_p(\Om),\\
\cs_{0,i} &\in W^{2-2/p}_p(\Si),
\end{align*}
which, if $p>3$, satisfies the compatibility conditions
\begin{align*}
\begin{array}{rcll}
c_{0,i} u(0) \cdot \nu -d_i\pa_\nu c_{0,i}&=&g^\text{in}_i (0)&\text{on}\;\Ga_{\textrm{in}},\\
-d_i\pa_\nu c_{0,i}&=&r^\text{sorp}_i(c_{0,i},\cs_{0,i})& \text{on}\;\Si,\\
-d_i\pa_\nu c_{0,i}&=&0&\text{on}\;\Gamma_{\textrm{out}},\\
-\ds_i\pa_{\nu_\Si} \cs_{0,i} &=&0 &\text{on}\; \pa \Si,
\end{array}
\end{align*}
there exists a $T^\ast\in(0,T')$ and a unique strong solution $(c_i,\cs_i)$ of (\ref{eq:cat})
satisfying
\begin{align*}
c_i&\in W^{1,p}((0,T),L^p(\Omega))\cap L^p((0,T),W^{2,p}(\Omega)),\\
\cs_i &\in W^{1,p}((0,T),L^p(\Si))\cap L^p((0,T),W^{2,p}(\Si)),
\end{align*} for all $T\in (0,T^*)$. If, in addition, $g^\text{in}_i\leq 0$ on $\Ga_\text{in}$, $c_{0,i}\geq
0$ in $\Om$ and $\cs_{0,i}\geq 0$ on $\Si$, then $c_i$ and $\cs_i $ are nonnegative.
\end{theorem}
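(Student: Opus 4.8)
The plan is to recast the system \eqref{eq:cat} as an abstract fixed point problem in the maximal regularity class $\bE(T):=\bE_c(T)\times\bE_{\cs}(T)$, where $\bE_c(T)=W^{1,p}((0,T),L^p(\Om))\cap L^p((0,T),W^{2,p}(\Om))$ and $\bE_{\cs}(T)$ is the analogous space over $\Si$, and to solve it by a Banach fixed point argument. The key observation is that, although $r^\text{sorp}_i$ couples the surface equation to the boundary condition of the bulk equation on $\Si$, this coupling is \emph{nonlinear}; the \emph{linear} part of \eqref{eq:cat} decouples into the advection--diffusion equation for $c_i$ in $\Om$ (with the inflow condition on $\Ga_\text{in}$, an inhomogeneous Neumann condition on $\Si$, and a homogeneous Neumann condition on $\Ga_\text{out}$) and, separately, the diffusion equation for $\cs_i$ on $\Si$ with a homogeneous Neumann condition on $\pa\Si$; the terms $r^\text{sorp}_i(c_i,\cs_i)$ and $r^\text{ch}_i(\cs)$ are moved to the right-hand sides and frozen at the previous iterate.

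First I would establish $L^p$-maximal regularity for this linearisation. For the surface equation, $\Si=\pa A\times(-h,h)$ is a compact two-dimensional $C^2$-manifold with boundary, and the Laplace--Beltrami operator with Neumann condition on $\pa\Si$ has $L^p$-maximal regularity by localisation to the (bent) half space together with the product structure of $\Si$. For the bulk equation, the principal part $\pa_t-d_i\Delta$ with the stated mixed boundary conditions on the cylinder $\Om$ is treated by localisation; the only non-smooth points are the edges $\pa A\times\{\pm h\}$, which can be removed by an even reflection across $\Ga_\text{in}$ and $\Ga_\text{out}$ compatible with the homogeneous parts of the boundary conditions there, reducing the problem to the smooth cross-sectional cylinder $A\times\R$. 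The transport term $(u\cdot\nabla)c_i$ is then absorbed as a lower-order perturbation using ($\text{A}^\text{vel}$); here $\div u=0$ allows one to write it as $\div(uc_i)$, and the regularity $u\in\U^\Om_p(T)$ together with the Sobolev embeddings available for $p>5/3$ make the perturbation small on short time intervals, so maximal regularity persists. This step also fixes the compatibility conditions in the statement: for $p>3$ the time trace at $t=0$ of the boundary data spaces exists, so the boundary relations of \eqref{eq:cat} must hold at $t=0$, whereas for $5/3<p<3$ the trace is too weak for any such restriction to appear, and $p=3$ is excluded as the borderline case.

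Next I would carry out the fixed point. Let $z^\ast\in\bE(T)$ be the solution of the linear problem with $r^\text{sorp}\equiv r^\text{ch}\equiv0$ and the given data, and seek the solution in the form $z^\ast+w$ with $w$ in the subspace ${}_0\bE(T)$ of elements with vanishing time trace at $t=0$. Define $\Phi$ on a small ball of ${}_0\bE(T)$ by sending $w$ to the solution of the linear problem whose data are $(r^\text{sorp}(c,\cs),r^\text{ch}(\cs))$ evaluated at $z^\ast+w$. To prove that $\Phi$ is a self-map and a strict contraction for small $T$, one needs (a) the trace theorems identifying the anisotropic Sobolev--Slobodeckii spaces on $(0,T)\times\Si$ in which $c_i|_\Si$ and $\cs_i$ live and into which $r^\text{sorp}_i$ and $r^\text{ch}_i$ must land, and (b) the structural assumptions $(A_\text{F}^\text{sorp})$, $(A_\text{M}^\text{sorp})$, $(A_\text{B}^\text{sorp})$ on $r^\text{sorp}$ (local Lipschitz regularity, linear growth) and $(A^\text{ch}_\text{F})$, $(A^\text{ch}_\text{N})$, $(A^\text{ch}_\text{P})$ on $r^\text{ch}$, which, combined with the surface Sobolev embeddings valid for $p>5/3$, show that $(c,\cs)\mapsto(r^\text{sorp}(c,\cs),r^\text{ch}(\cs))$ acts Lipschitz-continuously into the relevant data space with Lipschitz constant on a fixed ball tending to $0$ as $T\to0$, owing to the vanishing initial trace. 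A Banach fixed point argument on a sufficiently small $(0,T^\ast)$ then gives the unique solution, which lies in $\bE(T)$ for each $T<T^\ast$. I expect this to be the main obstacle: verifying that the sorption rates (mixing the bulk trace on $\Si$ with the surface concentrations) and the surface-chemistry rates (possibly quadratic, as in mass-action kinetics) are well-defined and locally Lipschitz into the precise anisotropic data spaces with the correct time weights is exactly where $p>5/3$ and the structural hypotheses on the rates are used, and the bulk--surface coupling makes this considerably more delicate than in the classical reaction--diffusion setting.

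Finally, for the nonnegativity statement I would use a positivity argument exploiting the sign structure built into the sorption assumptions and the quasi-positivity hypothesis $(A^\text{ch}_\text{N})$: testing the $c_i$-equation with $-(c_i)^-$ and the $\cs_i$-equation with $-(\cs_i)^-$ and adding, the sorption term in the boundary integral cancels against the sorption source term on $\Si$, the chemistry contribution has the favourable sign on $\{\cs_i<0\}$, the inflow term is controlled because $g^\text{in}_i\le0$, and a Gronwall inequality then forces $(c_i)^-\equiv0$ and $(\cs_i)^-\equiv0$, given the nonnegative initial data.
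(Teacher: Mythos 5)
Your overall strategy coincides with the paper's: maximal regularity for the decoupled linear bulk/surface problems, absorption of the advection terms as a small perturbation on short time intervals, a contraction argument in zero-time-trace maximal regularity spaces (where the constants are $T$-independent), and a test-function argument for nonnegativity using $(A_\text{M}^\text{sorp})$, $(A_\text{B}^\text{sorp})$, quasi-positivity and the sign conditions on $u\cdot\nu$ and $g^\text{in}_i$. However, there is one genuine gap in the fixed-point setup for the case $p>3$. You take the reference solution $z^\ast$ to be the solution of the linear problem with $r^\text{sorp}\equiv r^\text{ch}\equiv 0$ and seek the solution as $z^\ast+w$ with $w\in{_0\E}^\Om_p(T)\times{_0\E}^\Si_p(T)$, defining $\Phi(w)$ as the solution of the linear problem with data $r^\text{sorp}$, $r^\text{ch}$ evaluated at $z^\ast+w$. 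For $p>3$ these data have a nonvanishing time trace: the Neumann datum on $\Si$ equals $r^\text{sorp}_i(c_{0,i},\cs_{0,i})$ at $t=0$, which is in general nonzero, while $w(0)=0$ forces $\partial_\nu w(0)=0$. Hence the compatibility condition of the linear theory is violated, the datum does not lie in ${_0\G}^\Si_p(T)$, and $\Phi$ does not map into the zero-trace class — so the scheme as stated breaks down precisely in the regime where compatibility conditions are present (and your $T$-independent Lipschitz constants, which rely on the vanishing initial trace, are no longer available). The paper resolves this by first constructing an auxiliary extension $r^\ast_i\in\G^\Si_p(T)$ of $r^\text{sorp}_i(c_{0,i},\cs_{0,i})$ (via a surface heat semigroup on the infinite cylinder) and letting the reference solution carry $r^\ast_i$ as its $\Si$-boundary datum; the fixed-point map then acts on $\cN_{T,i}(z^\ast+w)-(0,0,0,r^\ast_i,0,0)$, which does have zero time trace. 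Your argument needs this (or an equivalent) correction; also note that the reference solution $z^\ast$ with zero $\Si$-datum would itself require $-d_i\partial_\nu c_{0,i}=0$ on $\Si$ for $p>3$, contradicting the prescribed compatibility condition.

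Two smaller points. First, $r^\text{ch}$ is only defined on $[0,\infty)^N$ by $(A^\text{ch}_\text{F})$, while iterates $\csb+{\cs}^\ast$ may be sign-changing; the paper extends it by $r^\text{ch}_{i,+}(y):=r^\text{ch}_i(y^+)$ (using that $y\mapsto y^+$ is Lipschitz) and only afterwards identifies the solution of the modified problem with a solution of (\ref{eq:cat}) through the nonnegativity lemma — you should do the same before invoking Lipschitz estimates. Second, in the nonnegativity argument the bulk boundary term and the surface source term involving $r^\text{sorp}_i$ do not simply cancel: after combining them one is left with $\int_\Si(\phi_\eps'(\cs_i)-\phi_\eps'(c_i))\,r^\text{sorp}_i\,d\si$, which vanishes only where $c_i$ and $\cs_i$ have the same sign; on the mixed-sign regions one needs the monotonicity $(A_\text{M}^\text{sorp})$ together with $r^\text{sorp}_i(0,\cs_i)\le 0$ and $r^\text{sorp}_i(c_i,0)\ge 0$ (consequences of $(A_\text{B}^\text{sorp})$) to obtain the correct sign; with this the negative parts are monotonically controlled and no Gronwall step is actually needed.
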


\begin{remark} If $c_i,\cs_i$ are not continuous functions, then $c_i\geq 0$ has to be understood in the a.e.\ sense with respect to Lebesgue measure on $\Om$, and $\cs_i\geq 0$ with respect to the surface measure on $\Si$.
\end{remark}

The global existence is given by
\begin{theorem}\thmlabel{Global-WP} (Global existence)
	Let the assumptions of \Thmref{Local-WP} be satisfied for $p=2$
	and $T'=\infty$. Additionally, assume that $r^\text{ch}$ fulfills
	($\textrm{A}^{\textrm{ch}}_{\textrm{S}}$)
	(see (\ref{eq:A^ch_S-structure-of-reaction}))
	and that $g^\text{in}_i\in BC((0,T)\times\Ga_{\text{in}})$, $c_{0,i}\in BC(\Omega)$, $\cs_{0,i}\in
	BC(\Sigma)$,
	and that $-g^\text{in}_i$,
	$c_{0,i}$, and $\cs_{0,i}$ are nonnegative.
	Then the local solution $(c_i,\cs_i)$
	extends to a global solution of (\ref{eq:cat}), i.e., for $p=2$ the
	assertions of \Thmref{Local-WP} hold for every finite $T>0$.
\end{theorem}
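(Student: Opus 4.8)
The plan is to rule out finite‑time blow‑up of the local solution provided by \Thmref{Local-WP} with $p=2$. By gluing local solutions it is defined on a maximal interval $[0,T_\ast)$, and in the maximal‑regularity framework it continues past a finite $T_\ast$ unless its time trace $(c_i(t),\cs_i(t))$ escapes every bounded subset of $W^{2-2/p}_p(\Om)\times W^{2-2/p}_p(\Si)=W^{1,2}(\Om)\times W^{1,2}(\Si)$ as $t\uparrow T_\ast$ (the nonlinearities being defined and sufficiently smooth on the whole state space, there is no other obstruction to continuation, and nonnegativity is preserved by \Thmref{Local-WP}). So it suffices to prove that for every finite $T<T_\ast$ there is a constant $C(T)$, independent of $T_\ast$, with $\sup_{t\in[0,T]}\sum_{i=1}^{N}\bigl(\|c_i(t)\|_{W^{1,2}(\Om)}+\|\cs_i(t)\|_{W^{1,2}(\Si)}\bigr)\le C(T)$. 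I would obtain this from $L^\infty$‑bounds on $c_i$ and $\cs_i$, which in turn follow from $L^1$‑bounds by a duality‑and‑bootstrap argument powered by the triangular structure $(\textrm{A}^{\textrm{ch}}_{\textrm{S}})$.

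\textbf{Step 1 ($L^1$‑estimates).} Integrating the bulk equation of \eqref{eq:cat} over $\Om$ and the surface equation over $\Si$, and using $\div u=0$, the flux boundary conditions, and the no‑flux condition on $\pa\Si$, the inflow and outflow fluxes together with the sorption flux on $\Si$ combine into
\[
\frac{d}{dt}\Bigl(\int_\Om c_i\,dx+\int_\Si\cs_i\,d\si\Bigr)=-\int_{\Ga_\text{in}}g^\text{in}_i\,d\si-\int_{\Ga_\text{out}}(u\cdot\nu)\,c_i\,d\si+\int_\Si r^\text{ch}_i(\cs)\,d\si ,
\]
with the two sorption contributions cancelling exactly. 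Since $-g^\text{in}_i\ge0$ is bounded, $c_i\ge0$, and $u\cdot\nu\ge0$ on $\Ga_\text{out}$, the first two terms are bounded above by a constant. Taking the linear combinations of these balances prescribed by the triangular matrix of $(\textrm{A}^{\textrm{ch}}_{\textrm{S}})$, and using quasipositivity $(A^\text{ch}_\text{N})$ to control also the negative parts of the $r^\text{ch}_i$, the surface‑chemistry contributions to the combined balances become affinely bounded; hence the total masses $m_i(t):=\|c_i(t)\|_{L^1(\Om)}+\|\cs_i(t)\|_{L^1(\Si)}$ solve a triangular system of differential inequalities that is resolved by successive applications of Gronwall's lemma. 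This gives $m_i(t)\le C(T)$ on $[0,T]$, and in passing an $L^1((0,T)\times\Si)$‑bound for the $r^\text{ch}_i(\cs)$.

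\textbf{Step 2 (bootstrap to $L^\infty$ and to the trace space).} I would then raise the integrability by an induction over the triangular ordering. Suppose $c_1,\dots,c_{k-1}$ and $\cs_1,\dots,\cs_{k-1}$ are already bounded (vacuous for $k=1$). Using $\cs_j\ge0$, the weighted partial sum $w_k=\sum_{j\le k}a_{kj}\cs_j$ satisfies a \emph{scalar} parabolic differential inequality of the form $\pa_t w_k-\Delta_\Si(b_k w_k)\le C(1+w_k)+(\text{linear-growth sorption terms})$ with a coefficient $b_k$ bounded above and below, the point being that $(\textrm{A}^{\textrm{ch}}_{\textrm{S}})$ has converted the superlinear surface kinetics into an affine right‑hand side. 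A Pierre‑type $L^2$‑duality estimate for this scalar inequality (with homogeneous Neumann conditions on $\pa\Si$) upgrades the $L^1$‑bound from Step 1 to $w_k\in L^2((0,T)\times\Si)$, and iterated parabolic regularity then gives $w_k\in L^\infty((0,T)\times\Si)$, hence $\cs_k\in L^\infty((0,T)\times\Si)$; the bulk concentration $c_k$ is controlled in parallel, its sole nonlinearity being the linearly‑bounded sorption flux on $\Si$ (assumption $(A_\text{B}^\text{sorp})$), via maximal $L^q$‑regularity for the advection–diffusion equation (with $\div u=0$ and the sign conditions of $(\text{A}^{\text{vel}})$ disposing of the remaining boundary terms). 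After $N$ steps, all $c_i,\cs_i$ are bounded on $(0,T)\times\Om$, $(0,T)\times\Si$. With the forcing terms $r^\text{sorp}_i$ and $r^\text{sorp}_i+r^\text{ch}_i$ now bounded, a final application of maximal $L^q$‑regularity with $q$ large, combined with the embeddings $W^{2-2/q}_q\hookrightarrow W^{1,2}$, bounds $\|c_i(t)\|_{W^{1,2}(\Om)}+\|\cs_i(t)\|_{W^{1,2}(\Si)}$ on $[\delta,T]$ for any $\delta>0$ (the interval $[0,\delta]$ being covered by the local solution), which contradicts finiteness of $T_\ast$; thus $T_\ast=\infty$.

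\textbf{Main obstacle.} I expect the hard part to be Step 2 and, within it, the interplay of bulk–surface coupling with non‑equal diffusivities: the Pierre $L^2$‑duality must be set up so that the traces of the bulk concentrations on $\Si$ — which enter the sorption rates — stay controlled (this forces a coupled dual problem, the bulk dual carrying a Robin‑type condition on $\Si$ that mirrors the sorption flux), and since the $d_i$ and $\ds_i$ differ one cannot reduce to a single scalar heat equation but must work with the variable‑coefficient partial sums $w_k$; it is precisely here that the structural assumption $(\textrm{A}^{\textrm{ch}}_{\textrm{S}})$ is indispensable, turning the superlinear chemistry into a cascade of affinely‑forced linear estimates. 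By comparison, checking that the advection term and the inflow/outflow fluxes never spoil the estimates is routine, thanks to $\div u=0$ and the sign conditions of $(\text{A}^{\text{vel}})$.
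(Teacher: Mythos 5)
The central gap is in your Step 2, and it is the step where the whole strategy has to deliver. The structural assumption ($\textrm{A}^{\textrm{ch}}_{\textrm{S}}$) only gives $[Q r^{\textrm{ch}}(\cs)]_k \leq C\bigl(1+\sum_{j=1}^{N}\cs_j\bigr)$, i.e.\ the upper bound involves \emph{all} components, not just those with index $j\le k$. Hence at stage $k$ of your induction the right-hand side of the comparison inequality contains $\cs_{k+1},\dots,\cs_N$, which at that point you control only in $L^1$; it is not ``affine in $w_k$''. In addition, $Q$ is only assumed lower triangular with strictly positive diagonal, so its off-diagonal entries may be negative; then $w_k=\sum_{j\le k}a_{kj}\cs_j$ need not be nonnegative and the quotient coefficient $b_k$ in $\Delta_\Si(b_k w_k)$ need not be bounded above and below — the Pierre--Schmitt device of recasting a weighted sum as a scalar divergence-form equation with bounded diffusion requires nonnegative weights. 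Finally, even for a genuinely scalar problem on the two-dimensional surface, a forcing known only in $L^1(\Si_T)$ cannot be upgraded to $L^\infty$ by ``iterated parabolic regularity'': each duality/regularity step gains only a little integrability, and your component-wise iteration never improves the uncontrolled components $j>k$ at all, so it cannot get started. Compare with the paper's mechanism: comparison with linear problems, the duality estimate of \Lemref{Weak-Estimates-Boundary} (which transfers the \emph{same} $L^q$ norm of the dominating solutions, used precisely to absorb the terms $q_{ij}(\pa_t\cs_j-\ds_j\Delta_\Si\cs_j)$, $j<i$, so no sign condition on $q_{ij}$ is needed), maximal regularity plus the embedding ${}_0\E^\Si_p(T)\hookrightarrow L^q(\Si_T)$, and \Lemref{Weak-Estimates-Domain} for the bulk yield the \emph{simultaneous} estimate of all $L^q$ norms by $1+{}$all $L^p$ norms; an LSU-type result pushes $q$ to $\infty$; and the estimate is closed not by an a priori mass bound but by a Gronwall argument in time at $p=2$, $q=\infty$, which is the missing closure in your scheme.

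Your Step 1 is also glossed over, although it is repairable. Quasipositivity does not ``control the negative parts of the $r^{\textrm{ch}}_i$'' (it is a positivity-preservation condition and plays no role in the mass balances); the actual difficulties are that (i) the outflow fluxes $\int_{\Ga_{\text{out}}}(u\cdot\nu)c_j\,d\si$ enter the $Q$-weighted balances with negative weights when $q_{ij}<0$ and are not bounded by the masses, and (ii) every weighted balance again has $\sum_{j=1}^N\int_\Si\cs_j\,d\si$ on the right, so ``successive Gronwall on a triangular system'' does not close as stated. One can fix this by deriving two-sided bounds on the time-integrated individual reaction terms $\int_0^t\int_\Si r^{\textrm{ch}}_j$ (lower bounds from nonnegativity of masses and of the outflow, upper bounds recursively from ($\textrm{A}^{\textrm{ch}}_{\textrm{S}}$)) and then applying a single Gronwall argument to the total mass; but even a correct $L^1$ bound does not rescue Step 2 as designed, and — as the paper's proof shows — no a priori mass bound is needed once the $L^q$-versus-$L^p$ estimates and the Gronwall-in-time closure are in place. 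The final passage from $L^\infty$ to $H^1$ bounds and continuation is in the same spirit as the paper's Step 6 and is the least problematic part of your proposal.
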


\section{Notation}\seclabel{notation}
Let $X$ be a Banach space and let $A:D(A)\subset X\rightarrow X$ denote a linear and densely defined operator. Let $Y$ be another Banach space. We denote the space of bounded linear operators mapping from $X$ to $Y$ by $\mathscr{L}(X,Y)$.

All appearing constants, e.g.\ $C,M>0$ denote generic constants which may
vary from line to line, as long as it is not explicitly stated otherwise. 

When working in time-space sets we make use of the notation $\Om_T:=(0,T)\times\Om$ and $\Si_T:=(0,T)\times\Si$ for a finite $T>0$.
When dealing with half infinite cylinders we write,
$\Om_{(-\infty,h)}:=A\times(-\infty,h)\subset \R^3$ or $\Si_{(-\infty,h)}:=\pa A\times (-\infty,h)$ for its lateral surface.

For a Banach space $X$, a domain $G\subset \R^n$, $m\in\N_0:=\N\cup\{0\}$, and
$s\in(0,\infty)\setminus\N$, $W^{m,p}(G,X)$ denotes the usual $X$-valued Sobolev space and
$W^s_p(G,X)$ denotes the $X$-valued Sobolev-Slobodeckij space.
The norm of $X$ will be denoted by
$\Vert\cdot\Vert_X$. We also set $H^k:=W^{k,2}$.
In the same manner we employ $C^m$ for $m$-times continuously differentiable functions, and $BC^m$ for those with bounded derivatives up to order $m\in \N_0$. 

We denote by $f^+:=\max\{0,f\}$, $f^-:=-\min\{0,f\}$ the positive and negative part of a function $f$.
Moreover, we use the superscripts $\pm$ to denote sets of
functions, whose elements are nonnegative or nonpositive; e.g., we write
$ L^\infty(\Om)^+$ for functions which admit a bounded essential
supremum on $\Om$ and which are nonnegative a.e.\ in $\Om$. With
corresponding meaning we employ, e.g., $L^\infty(\Om)^-$. 

Throughout this work, let
$\nabla_\Si u:=(\nabla u)|_{\partial\Omega}
-\nu(\nu\cdot (\nabla u)|_{\partial\Omega})$
denote the surface gradient and let $\Delta_\Si u=\nabla_\Si \cdot \nabla_\Si u$ denote the Laplace-Beltrami operator on $\Si$.

\subsection{Maximal Regularity Spaces}
For $1<p<\infty$, we employ the following maximal regularity
spaces.
The solution spaces for the unknown functions $c_i,c^\Si_i$ are given by
\begin{align*}
\E^\Om_p(T)&:=W^{1,p}((0,T),L^p(\Omega))\cap L^p((0,T),W^{2,p}(\Omega)),\\
\E^\Si_p(T)&:= W^{1,p}((0,T),L^p(\Si))\cap L^p((0,T),W^{2,p}(\Si)).
\end{align*}
For the data spaces we first establish appropriate regularity classes.
Then we give necessary compatibility conditions in order to guarantee well-posedness. We set
\begin{align*}
\F^\Om_p(T)&:=L^p((0,T)\times \Om),\\
\F^\Si_p(T)&:=L^p((0,T)\times \Si),\\
\G^\text{in}_p(T)&:=W^{1/2-1/2p}_p((0,T),L^p(\Ga_\text{in})) \cap L^p((0,T),W^{1-1/p}_p(\Ga_\text{in})),\\
\G^\Si_p(T)&:= W^{1/2-1/2p}_p((0,T),L^p(\Si)) \cap L^p((0,T),W^{1-1/p}_p(\Si)),\\
\G^\text{out}_p(T)&:= W^{1/2-1/2p}_p((0,T),L^p(\Ga_\text{out})) \cap L^p((0,T),W^{1-1/p}_p(\Ga_\text{out})),\\
\I^\Om_p&:=W^{2-2/p}_p(\Om), \\
\I^\Si_p&:= W^{2-2/p}_p(\Si).
\end{align*} We define the tupel data space for the heterogeneous catalysis equations without initial data through
\begin{align*}
\F^{\Om,\Si}_p(T):=\F^\Om_p(T)\times\F^\Si_p(T)\times\G^\text{in}_p(T)\times\G^\Si_p(T)\times \G^\text{out}_p(T)\times\{0\}
\end{align*} and the tupel data space with initial spaces through
\begin{align*}
\F^{\Om,\Si}_{p,I}(T):= \F^{\Om,\Si}_p(T) \times\I^\Om_p\times\I^\Si_p.
\end{align*} In some statements we also employ the Dirichlet trace space on $\Si$, which is given by
\begin{align*}
\bH^\Si_p(T)&:= W^{1-1/2p}_p((0,T),L^p(\Si)) \cap L^p((0,T),W^{2-1/p}_p(\Si)).
\end{align*}

Moreover, we need subspaces of functions having zero time trace. For instance, the $X$-valued
Sobolev-Slobodeckij space with zero time trace is defined as
\begin{equation}\label{eq:zerotime} \,_0W^s_p((0,T),X):=\{c_i\in W^s_p((0,T),X):\; {c_i}|_{t=0}=0\}
\end{equation} for $s\in(0,\infty)\setminus\N$ and $sp>1$. We use this notation for all
appearing spaces whenever zero time trace makes sense and write, e.g., $\,_0\E^\Om_p(T)$, $_0\E^\Si_p(T)$ etc.



\section{Linear Equations}\label{sec:linear_equations}

We discuss a suitable linearization of the heterogeneous catalysis equations as it is given below in (\ref{eq:withPerturbation}) and show maximal regularity by
means of cylindrical $L^p$-theory, the surjectivity of the Neumann trace
operator, and a perturbation argument. For given data
\begin{equation*} (f_i,\fs_i,g^\text{in}_i,\gs_i,g^\text{out}_i,0,c_{0,i},\cs_{0,i})\in\F^{\Om,\Si}_{p,I}(T),
\end{equation*} we consider the linear system:
\begin{equation}\label{eq:withPerturbation}
\left\{\begin{array}{rclcl}
\pa_t c_i+ u\cdot\nabla c_i- d_i\Delta c_i &=&f_i& \text{in}& (0,T)\times\Om, \\
\pa_t \cs_i - d^{\scriptscriptstyle\Si}_i \Delta_\Si \cs_i &=&f^{\scriptscriptstyle\Si}_i & \text{on}& (0,T)\times \Si, \\
c_i u\cdot\nu-d_i\pa_\nu c_i&=&g^\text{in}_i& \text{on}& (0,T)\times \Ga_\text{in}, \\
-d_i\pa_\nu c_i &=&\gs_i& \text{on}& (0,T)\times \Si,\\
-d_i\pa_\nu c_i &=& g^\text{out}_i & \text{on}& (0,T)\times \Ga_\text{out}, \\
-d^{\scriptscriptstyle\Si}_i\pa_{\nu_\Si} \cs_i &=&0 &\text{on}& (0,T)\times \pa \Si, \\
c_i(0)&=&c_{0,i}   &\text{in}&\Om,   \\
\cs_i(0)&=&\cs_{0,i} & \text{in}& \Si.
\end{array}\right.
\end{equation} Our purpose is to solve (\ref{eq:withPerturbation}) for the unknown concentrations $c_i, \cs_i$.
The main result of this section is given by

\begin{proposition}{\label{prop:maxRegPerturbedSystem}} Let $5/3<p<\infty$ with  $p\neq 3$ and let $T>0$ be finite. Suppose the velocity field $u$ satisfies assumption ($A^\text{vel}$). Then (\ref{eq:withPerturbation}) admits a unique solution
\begin{equation*} (c_i,\cs_i)\in \E^\Om_p(T)\times \E^\Si_p(T)
\end{equation*} if and only if the data satisfy the regularity condition
\begin{equation*}
(f_i,\fs_i,g^\text{in}_i, \gs_i, g^\text{out}_i,0,c_{0,i},\cs_{0,i})\in\F^{\Om,\Si}_{p,I}(T),
\end{equation*} and in case of $p>3$ the compatibility conditions
\begin{align*}\begin{array}{rcll}
(u|_{t=0}\cdot\nu)c_{0,i}-d_i\pa_\nu c_{0,i}&=&g^\text{in}_i|_{t=0} &\text{on}\;\Ga_\text{in},\\
-d_i\pa_\nu c_{0,i}&=&\gs_i|_{t=0}& \text{on}\;\Si,\\
-d_i\pa_\nu c_{0,i}&=&g^\text{out}_i|_{t=0}& \text{on}\;\Ga_\text{out},\\
-\ds_i\pa_{\nu_\Si} \cs_{0,i}&=&0 &\text{on}\;\pa\Si.\end{array}
\end{align*} Additionally, the corresponding solution operator
${_0\cS}_T$ with respect to zero time trace spaces satisfies
\begin{equation*}
\Vert
{_0\cS}_\tau\Vert_{\mathscr{L}\left({{_0\F}^{\Om,\Si}_{p}(\tau)}^N,\,{_0\E^\Om_p(\tau)}^N\times{_0\E^\Si_p(\tau)}^N\right)}
\leq M
\end{equation*} with a constant $M>0$ independent of $\tau<T$.
\end{proposition}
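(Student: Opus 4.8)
The plan is to decouple the system (\ref{eq:withPerturbation}) into two well-understood pieces: a bulk advection-diffusion problem for $c_i$ with inhomogeneous Neumann data on $\Ga_\text{in}$, $\Si$, $\Ga_\text{out}$, and a surface heat equation on the closed-free-boundary manifold $\Si$ for $\cs_i$ with Neumann condition on $\pa\Si$. Since $f_i^\Si$ does not couple back to the bulk, the surface equation can be solved first: the Laplace-Beltrami operator $\Delta_\Si$ on the cylindrical surface $\Si=\pa A\times(-h,h)$ generates an analytic semigroup, and with the conormal boundary condition $-\ds_i\pa_{\nu_\Si}\cs_i=0$ on $\pa\Si=\pa A\times\{\pm h\}$ it enjoys maximal $L^p$-regularity on $\E^\Si_p(T)$ for data in $\F^\Si_p(T)\times\I^\Si_p$ with the single compatibility condition when $p>3$. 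This is classical parabolic theory on a (flat) product manifold; I would cite the standard maximal regularity results (Denk-Hieber-Prüss type) after noting that $\Si$ is, up to the $C^2$-diffeomorphism $\pa A\cong \mathbb S^1$, just $\mathbb S^1\times(-h,h)$.

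For the bulk part, the first step is to reduce to zero initial data and homogeneous right-hand side by subtracting a reference solution; here one uses surjectivity of the relevant trace operators and the fact that $\E^\Om_p(T)\hookrightarrow$ the appropriate trace spaces. Next I would treat the principal part $\pa_t - d_i\Delta$ on the finite cylinder $\Om=A\times(-h,h)$ with pure Neumann boundary conditions on all of $\pa\Om$: this operator has maximal $L^p$-regularity, which again follows from cylindrical $L^p$-theory (the operator is a tensor-type sum of the Neumann Laplacian on $A$ and on $(-h,h)$, or one invokes $\mathcal R$-sectoriality on bounded $C^2$-domains). The inhomogeneous Neumann data $g^\text{in}_i, \gs_i, g^\text{out}_i$ are handled by the surjectivity of the Neumann trace operator from $\E^\Om_p(T)$ onto $\G^\text{in}_p(T)\times\G^\Si_p(T)\times\G^\text{out}_p(T)$ — with the $p>3$ compatibility conditions encoding that the time trace of the boundary data matches the normal derivative of $c_{0,i}$. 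Finally the first-order advection term $u\cdot\nabla c_i$ together with the boundary term $c_i\, u\cdot\nu$ on $\Ga_\text{in}$ is absorbed as a lower-order perturbation: since $u\in\U^\Om_p(T)$ one has $u\cdot\nabla c_i\in \F^\Om_p(T)$ by Hölder estimates and mixed-derivative embeddings (here the restriction $p>5/3$ enters, to guarantee $\nabla c_i$ lies in a space that multiplies against $u$ into $L^p$), and the boundary trace $c_i\,u\cdot\nu|_{\Ga_\text{in}}$ lies in $\G^\text{in}_p(T)$; on short time intervals, or after the standard Neumann-series / a-priori-estimate argument, this perturbation has small norm relative to the maximal-regularity operator and is thus invertible.

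The equivalence "solution exists iff data are in $\F^{\Om,\Si}_{p,I}(T)$ and satisfy the compatibility conditions" then follows: necessity is just the mapping properties of trace operators applied to any putative solution in $\E^\Om_p(T)\times\E^\Si_p(T)$, and sufficiency is the construction above. For the final, and to my mind most important, assertion — the uniform bound on the solution operator ${}_0\cS_\tau$ on zero-time-trace spaces, with $M$ independent of $\tau<T$ — the key point is that on zero-time-trace spaces all the compatibility conditions are vacuous and the relevant extension/trace operators and the principal maximal-regularity operator have norms that do not blow up as $\tau\to 0$ (this is the standard fact that ${}_0\E_p(\tau)\hookrightarrow {}_0$-trace spaces with $\tau$-independent embedding constant, and that constant-coefficient maximal-regularity constants on $(0,\tau)$ are monotone in $\tau$). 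One must check that the perturbation $u\cdot\nabla$ does not spoil this: the subtlety is that $u$ is a \emph{fixed} function on $(0,T)$, so its norm on $(0,\tau)$ is controlled by its norm on $(0,T)$, and for $\tau$ small the perturbation norm is actually small — so one gets a uniform $M$ on an initial interval $(0,\tau_0)$ by Neumann series and then on $[\tau_0,T)$ by a compactness/finite-covering argument gluing finitely many local solution operators. The main obstacle is bookkeeping: carefully tracking which constants depend on $\tau$, and verifying the mixed-norm multiplier estimate $\|u\cdot\nabla c_i\|_{\F^\Om_p(\tau)}\le C\,\|u\|_{\U^\Om_p(T)}\,\|c_i\|_{{}_0\E^\Om_p(\tau)}$ with $C$ independent of $\tau$, which is where the lower bound $p>5/3$ (and the exclusion $p\neq 3$, so that the trace spaces behave regularly) is genuinely used.
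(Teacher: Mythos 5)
Your proposal follows essentially the same route as the paper: cylindrical $L^p$-maximal regularity for the decoupled homogeneous Neumann problems on $\Om=A\times(-h,h)$ and on $\Si$, surjectivity of the Neumann trace operator to absorb the inhomogeneous boundary data, reduction to zero-time-trace spaces so that all constants are independent of $\tau$, and a Neumann-series perturbation argument for $(u\cdot\nabla)c_i$ and $(u\cdot\nu)c_i$ with smallness of order $C\tau^\eta$, extended to the whole interval by linearity/stepping in time. The one caveat is your parenthetical alternative of invoking $\cR$-sectoriality on bounded $C^2$-domains for the bulk operator: $\Om$ has edges at $\pa A\times\{\pm h\}$, so smooth-domain theory does not apply directly and one genuinely needs the tensor/cylindrical structure (your primary route, and the paper's, via the cylindrical $L^p$-theory).
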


Proposition~\ref{prop:maxRegPerturbedSystem} shows that the map $\cL : \E \to \F$,
where $\E= \E^\Om_p(T)^N\times \E^\Si_p(T)^N$ and $\F$ is the subspace of $\F^{\Om,\Si}_{p,I}(T)^N$
containing all elements which satisfy the required compatibility conditions,
is an isomorphism between the Banach spaces $\E$ and $\F$.
In this context, one speaks of {\em maximal regularity} of problem (\ref{eq:withPerturbation}),
or of the operator $\cL:\E\to\F$, acting on $(c_i,\cs_i)$.

{\it Plan of the proof:} System (\ref{eq:withPerturbation}) decomposes into two systems: One for the bulk concentrations $c_i$ in $\Om$ and one for the surface concentrations $\cs_i$ on $\Si$. In the first step we
neglect the velocity terms $(u\cdot \nabla ) c_i$ and $(u\cdot\nu)c_i$ -- playing the role of perturbation terms -- and consider only homogeneous boundary data, i.e.\ we start with
\begin{equation}\label{eq1.2}
\left\{\begin{array}{rclcl}
\pa_t c_i- d_i\Delta c_i &=&f_i& \text{in}& (0,T)\times\Om, \\
-d_i\pa_\nu c_i &=&0& \text{on}&  (0,T)\times\pa\Omega, \\
c_i(0)&=&c_{0,i}& \text{in}& \Om, \end{array} \right. \quad (i=1,...,N)
\end{equation} and
\begin{equation}\label{eq1.3}
\left\{\begin{array}{rclcl}
\pa_t \cs_i - d^{\scriptscriptstyle\Si}_i \Delta_\Si \cs_i &=&f^{\scriptscriptstyle\Si}_i & \text{on}& (0,T)\times \Si, \\
-d^{\scriptscriptstyle\Si}_i\pa_{\nu_\Si} \cs_i &=&0 &\text{on}& (0,T)\times \pa \Si, \\
\cs_i(0)&=&\cs_{0,i}&\text{on}& \Si. \end{array}\right. \quad (i=1,...,N)
\end{equation} We proceed as follows: We solve (\ref{eq1.2}) and (\ref{eq1.3}) separately via cylindrical $L^p$-theory. For more information on this topic, see \cite{NS2011}, \cite{NS2012}, \cite{Nau2012} and \cite{Nau2013}. Then a symmetric extension in axial direction of $\Om$ yields the surjectivity of the Neumann trace operator and,
consequently, the solvability of the inhomogeneous initial boundary value problem. By  perturbation arguments the obtained result carries over to (\ref{eq:withPerturbation}).

\subsection{Maximal Regularity of the Laplacian}\label{sec1}
We define
\begin{align*}
A_i:=-d_i \Delta, \quad A_i:D(A_i)\subset L^p(\Om)\rightarrow L^p(\Om), \\
D(A_i):=\{c_i\in W^{2,p}(\Om): \; -d_i\pa_\nu c_i=0\;\text{on}\; \pa\Om\},\quad(i=1,...,N)
\end{align*} and, analogously,
\begin{align*}
&\As_i:=-\ds_i\Delta_\Si, \quad \As_i:D(\As_i)\subset L^p(\Si)\rightarrow L^p(\Si), \\
&D(\As_i):=\{\cs_i\in W^{2,p}(\Si): \; -d^{\scriptscriptstyle\Si}_i\pa_{\nu_\Si}\cs_i=0\;\text{on}\;\pa\Si\}\quad (i=1,...,N).
\end{align*}
In this section we will show that $A_i$ admits
a bounded $\cH^\infty$-calculus and that $\As_i$ is $\cR$-sectorial with angles strictly less than $\pi/2$ which implies the
desired maximal regularity, cf.\ \cite{Denk-Hieber-Pruess:Maximal-Regularity}, \cite{Kunstmann-Weiss}, \cite{Weis-Maximal-Regularity-R-Sectoriality}. For $A_i$
we directly apply \cite[Theorem 4.1]{Nau2013}.
To this end, we employ the following cylindrical decomposition.
We set $V_1:=A\subset\R^2$ and $V_2:=(-h,h)\subset\R$ for the cylinder $\Om$ with height $2h>0$.
\begin{itemize}
\item
We first consider the problem which results on the sections for fixed $x_3$, yielding the following problems on
$V_1=A\subset \R^2$:
\begin{align*}
A_{i,1}:D(A_{i,1})\subset L^p(V_1)\rightarrow L^p(V_1), \quad A_{i,1}c_i:=-d_i(\pa_{x_1}^2+\pa_{x_2}^2)c_i,\\
D(A_{i,1}):=\{c_i\in W^{2,p}(V_1):\, B_{i,1} c_i:=-d_i\pa_\nu c_i
=0\; \text{on}\; \pa V_1\}.
\end{align*}
\item The direction along the cylinder leads to an operator on an interval $V_2$:
\begin{align*}
A_{i,2}:D(A_{i,2})\subset L^p(V_2)\rightarrow L^p(V_2), \quad A_{i,2}c_i:=-d_i \pa_{x_3}^2 c_i,\\
D(A_{i,2}):=\{ c_i\in W^{2,p}(V_2):\; B_{i,2} c_i
:=-d_i\pa_\nu c_i=0\;\text{on}\; \pa V_2\}.
\end{align*}
\end{itemize}
It is a well-known fact that both operators, $A_{i,1}$ and $A_{i,2}$
admit a bounded $\cH^\infty$-calculus with zero $\cH^\infty$-angle. For these results and an introduction of the $\cH^\infty$-calculus we refer to e.g.\ \cite{Denk-Hieber-Pruess:Maximal-Regularity} and \cite{Kunstmann-Weiss}.
Thus, we are exactly in the setting of \cite{Nau2013} in the case of the
strong Neumann-Laplacian given on both intersections  $V_1,V_2$.
Therefore \cite[Theorem 4.1 a)]{Nau2013} yields that $A_i+\delta$
for some $\delta>0$ admits a bounded $\cH^\infty$-calculus on
$L^p(\Om)$ with $\cH^\infty$-angle
$\phi^{\infty}_{A_i+\delta}<\frac{\pi}{2}$.
This implies maximal regularity for (\ref{eq1.2}) on finite intervals
$(0,T)$, see e.g.\ \cite{Denk-Hieber-Pruess:Maximal-Regularity}.

We continue with the discussion of $\As_i$.
Here we first employ a parametrization of the lateral surface and afterwards apply 
a result from \cite{Nau2012}.
By the assumption on the cross section of $\Om$, $\pa A$ constitutes a closed regular $C^2$-curve. Hence we may choose a parametrization $\psi^\Si:(0,2\pi)\rightarrow\R^2$ of $\pa A$. Let
\begin{equation*}\label{eq1.9}
\Psi^\Si:L^p(\pa V_1)\rightarrow L^p((0,2\pi)), \quad \Psi^\Si \cs_i:=\cs_i\circ \psi^\Si
\end{equation*} denote the induced pull-back. Then the $\Psi^\Si$-transformed Laplace-Beltrami operator $\Delta_{\Si,1}$ on $\pa V_1$ is a second-order elliptic differential operator on $L^p((0,2\pi))$ subject to periodic boundary conditions:
\begin{align*}
\As_{i,1}&:D(\As_{i,1})\subset L^p((0,2\pi))\rightarrow L^p((0,2\pi)), \quad
\As_{i,1} \cs_i:= \Psi^\Si(-\ds_i\Delta_{\Si,1}\cs_i),\\
D(\As_{i,1})&=\{\cs_i\in W^{2,p}((0,2\pi)): \;
\pa_\varphi^j \cs_i|_{\varphi=0}=
\pa_\varphi^j\cs_i|_{\varphi=2\pi}\;\;(j=0,1) \}.
\end{align*}
We have
\begin{lemma}\label{lm1.3} Let the pull-back $\Psi^\Si$ be given as above, then
\begin{enumerate}
\item[(1)] $\Psi^\Si$ is an isomorphism.
\item[(2)] $\Psi^\Si: D(\Delta_{\Si,1})\rightarrow D(\As_{i,1})$
is an isomorphism.
\end{enumerate}
\end{lemma}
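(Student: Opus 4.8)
The plan is to prove Lemma~\ref{lm1.3} by unpacking the definitions and using the regularity of the parametrizing curve $\psi^\Si$. Since $\pa A$ is a closed regular $C^2$-curve, we may (after a smooth reparametrization by a constant multiple of arclength) assume $\psi^\Si \in C^2$ with $|(\psi^\Si)'|$ bounded above and below by positive constants, and extended $2\pi$-periodically. First I would establish (1): the pull-back $\Psi^\Si c^\Si = c^\Si \circ \psi^\Si$ maps $L^p(\pa V_1)$ to $L^p((0,2\pi))$ boundedly, because the change of variables formula gives $\int_0^{2\pi} |c^\Si(\psi^\Si(\varphi))|^p\,|(\psi^\Si)'(\varphi)|\,d\varphi = \int_{\pa A} |c^\Si|^p\,ds$, and $|(\psi^\Si)'|$ is comparable to $1$; the inverse is $c^\Si \mapsto c^\Si \circ (\psi^\Si)^{-1}$, which is bounded by the same argument. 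Hence $\Psi^\Si \in \mathscr{L}(L^p(\pa V_1), L^p((0,2\pi)))$ is an isomorphism.

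Next I would prove (2). The key point is that $\Psi^\Si$ intertwines the two operators in the sense already built into the definition, namely $\As_{i,1} = \Psi^\Si \circ (-\ds_i\Delta_{\Si,1}) \circ (\Psi^\Si)^{-1}$ on the appropriate domains, so the claim reduces to checking that $\Psi^\Si$ maps $D(\Delta_{\Si,1})$ bijectively onto $D(\As_{i,1})$. Since $\psi^\Si$ is a $C^2$-diffeomorphism (onto its image) with non-vanishing derivative, composition with $\psi^\Si$ maps $W^{2,p}(\pa V_1)$ onto $W^{2,p}((0,2\pi))$ isomorphically — this is the standard invariance of Sobolev spaces under $C^2$ (hence $C^{1,1}$) changes of coordinates on a one-dimensional manifold, and it respects the periodicity conditions because $\pa A$ is a closed curve, i.e.\ $\psi^\Si$ and its derivative agree at $\varphi=0$ and $\varphi=2\pi$. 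Concretely, the chain rule gives
\begin{align*}
\pa_\varphi(c^\Si\circ\psi^\Si) &= ((\nabla_{\pa A} c^\Si)\circ\psi^\Si)\cdot(\psi^\Si)',\\
\pa_\varphi^2(c^\Si\circ\psi^\Si) &= ((\nabla_{\pa A}^2 c^\Si)\circ\psi^\Si)[(\psi^\Si)',(\psi^\Si)'] + ((\nabla_{\pa A} c^\Si)\circ\psi^\Si)\cdot(\psi^\Si)'',
\end{align*}
from which one reads off that $c^\Si\in W^{2,p}(\pa V_1)$ iff $c^\Si\circ\psi^\Si\in W^{2,p}((0,2\pi))$ and that the periodic boundary conditions on the two sides correspond. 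Since $-\ds_i\Delta_{\Si,1}$ is precisely the one-dimensional form of $-\ds_i\Delta_\Si$ in arclength-type coordinates, its domain $D(\Delta_{\Si,1})$ consists of the $W^{2,p}$-functions on $\pa V_1$ with the natural periodicity, and its image under $\Psi^\Si$ is exactly $D(\As_{i,1})$ as defined above.

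The main obstacle — really the only subtle point — is bookkeeping rather than substance: one must make sure the periodicity/matching conditions at the seam $\varphi\in\{0,2\pi\}$ are correctly transported, and that the coefficients of the transformed operator $\As_{i,1}$ (which involve $|(\psi^\Si)'|$, $(\psi^\Si)''$, etc.) are merely $L^\infty$ (indeed $C^0$) and bounded away from degeneracy, so that $D(\As_{i,1})$ is genuinely all of $W^{2,p}((0,2\pi))$ with periodic data and not some smaller space cut out by the lower-order terms. Both follow from $\psi^\Si\in C^2$ regular and closed; no estimates beyond the change-of-variables formula and the chain rule are needed, so the proof is short. I would close by remarking that consequently $\As_{i,1}$ inherits from $-\ds_i\Delta_{\Si,1}$ (equivalently, from the periodic Laplacian on $(0,2\pi)$, after absorbing the smooth coefficients) whatever functional-analytic properties are needed in the sequel, e.g.\ a bounded $\cH^\infty$-calculus with $\cH^\infty$-angle zero for a suitable shift.
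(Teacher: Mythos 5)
Your argument is correct: the change-of-variables bound with $|(\psi^\Si)'|$ bounded above and below gives (1), and the chain-rule computation together with the transport of the periodicity conditions at the seam gives (2). The paper itself states Lemma~\ref{lm1.3} without proof, treating it as a standard fact about composition with a regular $C^2$ parametrization, and your write-up supplies exactly the routine verification the authors leave implicit, so there is no divergence in approach to report.
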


Analogously to $A_i$, we resolve the transformation of $\As_i$ into two
cylindrical parts: $\As_{i,1}$ defined as above and $\As_{i,2}:=A_{i,2}$.

Applying this time \cite[Theorem 8.10]{Nau2012}
we infer that there is a shift $\delta>0$ such that
$\delta+ \As_i$ is $\cR$-sectorial with $\phi^\cR_{\As_i+\delta}<\frac{\pi}{2}$. This implies maximal regularity of (\ref{eq1.3}) on finite intervals $(0,T)$ by \cite[Theorem 4.2]{Weis-Maximal-Regularity-R-Sectoriality}.

We summarize the results of Subsection~\ref{sec1} in

\begin{lemma}\label{lm:Homogeneous-Equations} Let $1<p<\infty$ with $p\neq 3$ and $T>0$ be given.
\begin{enumerate}
\item[a)] System (\ref{eq1.2}) admits a unique solution $c_i\in \E^\Om_p(T)$ if and only if
the data satisfies the regularity condition $f_i\in\F^\Om_p(T)$, $c_{0,i}\in\I^\Om_p$ and, in case of $p>3$, the compatibility condition $-d_i\pa_\nu c_{0,i}=0$ on $\pa\Om$.
\item[b)] System (\ref{eq1.3}) admits a unique solution $\cs_i\in \E^\Si_p(T)$ if and only if
the data satisfies the regularity condition $\fs_i\in\F^\Si_p(T)$, $\cs_{0,i}\in\I^\Si_p$ and, in case of $p>3$, the compatibility condition $-\ds_i\pa_{\ns} c_{0,i}=0$ on $\pa\Si$.
\end{enumerate}
\end{lemma}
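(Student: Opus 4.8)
The plan is to read both assertions off the operator-theoretic facts assembled above in this subsection: for a suitable $\delta>0$ the operator $A_i+\delta$ admits a bounded $\cH^\infty$-calculus on $L^p(\Om)$ with angle strictly less than $\pi/2$, and $\As_i+\delta$ is $\cR$-sectorial on $L^p(\Si)$ with angle strictly less than $\pi/2$. By the Weis characterization of maximal $L^p$-regularity (see \cite{Denk-Hieber-Pruess:Maximal-Regularity}, \cite{Weis-Maximal-Regularity-R-Sectoriality}, \cite{Kunstmann-Weiss}) each of these shifted operators has maximal $L^p$-regularity on the finite interval $(0,T)$: for every right-hand side in $L^p((0,T),L^p(\Om))$ and every initial value in the real interpolation space $X_{\gamma,i}:=(L^p(\Om),D(A_i))_{1-1/p,p}$ there is a unique solution in $\E^\Om_p(T)$, with a norm estimate, and likewise for $\As_i+\delta$ with trace space $(L^p(\Si),D(\As_i))_{1-1/p,p}$.

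First I would remove the shift: the substitution $c_i(t)=e^{-\delta t}\tilde c_i(t)$ turns (\ref{eq1.2}) into $\pa_t\tilde c_i+(A_i+\delta)\tilde c_i=e^{\delta t}f_i$ with unchanged initial value and unchanged boundary condition (the Neumann condition being encoded in $D(A_i)$); since multiplication by $e^{\pm\delta t}$ is an isomorphism of both $\F^\Om_p(T)$ and $\E^\Om_p(T)$ for finite $T$, problem (\ref{eq1.2}) is maximally regular exactly when the shifted one is. The analogous substitution, now invoking the $\cR$-sectoriality of $\As_i+\delta$, handles (\ref{eq1.3}).

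The substantive step is the identification of the trace spaces. I would show that $X_{\gamma,i}=W^{2-2/p}_p(\Om)=\I^\Om_p$ when $1<p<3$ and $X_{\gamma,i}=\{c_0\in W^{2-2/p}_p(\Om):-d_i\pa_\nu c_0=0\text{ on }\pa\Om\}$ when $3<p<\infty$; the borderline value $p=3$, at which $2-2/p=1+1/p$ so the Neumann trace ceases to be a bounded functional on $W^{2-2/p}_p(\Om)$, is precisely why it must be excluded. This is the classical description of the domain-interpolation space of the Neumann Laplacian; it may be quoted directly from \cite{Denk-Hieber-Pruess:Maximal-Regularity}, or derived from the cylindrical decomposition already in place, since $X_{\gamma,i}$ coincides up to equivalent norms with the intersection of the corresponding trace spaces of $A_{i,1}$ on $V_1$ and $A_{i,2}$ on $V_2$, each of which is standard. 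For $\As_i$ one argues in the same way after the pull-back of Lemma~\ref{lm1.3}: since $\pa A$ is a closed curve, $\pa\Si$ consists only of the two copies of $\pa A$ at $x_3=\pm h$, so only the axial Neumann condition survives, giving $(L^p(\Si),D(\As_i))_{1-1/p,p}=W^{2-2/p}_p(\Si)=\I^\Si_p$ for $1<p<3$ and its subspace cut out by $-\ds_i\pa_{\ns}\cs_0=0$ on $\pa\Si$ for $3<p<\infty$.

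Finally I would phrase the result as an equivalence. Sufficiency of the stated regularity and compatibility conditions is exactly the maximal-regularity statement just assembled. For necessity, if $c_i\in\E^\Om_p(T)$ solves (\ref{eq1.2}) then $f_i:=\pa_t c_i-d_i\Delta c_i$ lies in $\F^\Om_p(T)$ by definition of the solution space; the embedding $\E^\Om_p(T)\hookrightarrow BC([0,T],X_{\gamma,i})$ yields $c_{0,i}=c_i(0)\in X_{\gamma,i}$, hence $c_{0,i}\in\I^\Om_p$ together with the compatibility condition when $p>3$ by the trace-space identification; and uniqueness is immediate from the bijectivity in the maximal-regularity statement. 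The argument for (\ref{eq1.3}) is verbatim the same. The only genuinely delicate point is the precise identification of the interpolation/trace spaces and the consequent exclusion $p\neq 3$; everything else is bookkeeping within the cylindrical $L^p$-theory already cited.
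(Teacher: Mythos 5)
Your proposal is correct and follows essentially the same route as the paper: Lemma~\ref{lm:Homogeneous-Equations} is stated there merely as a summary of Subsection~\ref{sec1}, where the cylindrical $L^p$-theory gives the bounded $\cH^\infty$-calculus of $A_i+\delta$ and the $\cR$-sectoriality of $\As_i+\delta$ with angles below $\pi/2$, and maximal regularity on finite intervals then follows from the standard references. Your additional bookkeeping (removing the shift on a finite interval, identifying the temporal trace space $(L^p,D(A_i))_{1-1/p,p}$ with $W^{2-2/p}_p$ plus the Neumann compatibility condition for $p>3$, and the necessity direction) is exactly the material the paper delegates to those references, with the only mildly informal point being the description of the trace space as an ``intersection'' coming from the cylindrical decomposition, which is harmless here since the identification is covered by the cited theory.
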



\subsection{Inhomogeneous Neumann Boundary Conditions}\label{sec4}

We turn to the discussion of inhomogeneous boundary values.
We show surjectivity of the trace operator
which leads to
the solvability of the corresponding inhomogeneous boundary value problem.

\begin{lemma}\label{lm:Surjectivity-of-Trace} Let $1<p<\infty$ with
$p\neq 3$ and let $T>0$. Then the Neumann trace operator
\begin{align*}
\gamma_1:\E^\Om_p(T)&\rightarrow \G^\text{in}_p(T)\times\G^\Si_p(T)\times\G^\text{out}_p(T)\\
&c\mapsto (\pa_\nu c|_{\Ga_\text{out}}, \pa_\nu c|_\Si,\pa_\nu c|_{\Ga_\text{in}})
\end{align*} is a retraction.
\end{lemma}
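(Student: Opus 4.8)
\emph{Plan of the proof.} By definition $\gamma_1$ is a retraction exactly when it admits a bounded linear right inverse (a coretraction), so the task is to construct a bounded linear operator $\gamma_1^{c}$ from $\G^\text{in}_p(T)\times\G^\Si_p(T)\times\G^\text{out}_p(T)$ into $\E^\Om_p(T)$ with $\gamma_1\circ\gamma_1^{c}=\mathrm{id}$; equivalently, one must solve the inhomogeneous Neumann problem with independently prescribed data on $\Ga_\text{in}$, $\Si$ and $\Ga_\text{out}$ by a bounded linear solution operator. That, conversely, each of the three traces of a fixed $c\in\E^\Om_p(T)$ lands in the asserted space is the standard Neumann trace theorem for a flat, resp.\ cylindrical, boundary piece; the real content is surjectivity together with a bounded section. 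By linearity it suffices to realize data on one face at a time, provided each partial corrector can be arranged so as not to disturb the traces on the other two faces. The only genuine difficulty is the compatibility at the two edge circles $\pa A\times\{-h\}$ and $\pa A\times\{h\}$, where a flat face meets the lateral face; the device that removes it is the symmetric extension of $\Om$ in the axial variable $x_3$ announced in the plan of proof.

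For the lateral datum $g^\Si\in\G^\Si_p(T)$ I would first extend $g^\Si$ in the axial variable by a bounded Sobolev--Slobodeckij extension operator to a datum $\tilde g^\Si$ on $\pa A\times\R$. Iterated even reflection of $\Om$ across $x_3=\pm h$ unfolds it onto the infinite cylinder $A\times\R$, which is a smooth domain of the product type treated in Subsection~\ref{sec1}; hence the Neumann Laplacian on $A\times\R$ has maximal regularity and its lateral Neumann trace is a retraction (cf.\ \cite{NS2011}, \cite{NS2012}). Solving with boundary datum $\tilde g^\Si$ and restricting to $(0,T)\times\Om$ yields $c_\Si\in\E^\Om_p(T)$ with $\pa_\nu c_\Si=g^\Si$ on $\Si$, depending boundedly and linearly on $g^\Si$. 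Since $A\times\{\pm h\}$ are interior cross sections of $A\times\R$, the function $c_\Si$ has full bulk regularity there, so its Neumann traces on $\Ga_\text{in}$ and $\Ga_\text{out}$ are well defined and belong to $\G^\text{in}_p(T)$ and $\G^\text{out}_p(T)$. Subtracting $c_\Si$ reduces the problem to data supported on $\Ga_\text{in}\cup\Ga_\text{out}$ with vanishing lateral component.

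For the remaining data on $\Ga_\text{in}$ and $\Ga_\text{out}$, reflecting $\Om$ evenly across $x_3=h$ turns $\Ga_\text{out}$ into an interior cross section while preserving the homogeneous lateral Neumann condition already achieved; prescribing $\pa_\nu c$ on $\Ga_\text{out}$ simultaneously with $\pa_\nu c=0$ on $\Si$ is therefore realized by the coretraction of the one-dimensional Neumann heat problem in the variable $x_3$ --- the classical half-line Neumann trace retraction, which maps $W^{1/2-1/2p}_p((0,T),L^p(A))\cap L^p((0,T),W^{1-1/p}_p(A))$ boundedly into $\E^\Om_p(T)$ --- tensored over the cross section $A$. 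This construction commutes with the cross-sectional Neumann structure, so the corrector automatically satisfies the homogeneous lateral Neumann condition and does not spoil the previous step. Carrying out the mirror construction across $x_3=-h$ for $\Ga_\text{in}$ and adding up, one obtains a corrector in $\E^\Om_p(T)$ with the prescribed Neumann traces on $\Ga_\text{in}$ and $\Ga_\text{out}$, zero Neumann trace on $\Si$, and bounded linear dependence on the data. The sum of this corrector with $c_\Si$ defines the value of $\gamma_1^{c}$ on the original datum, and $\gamma_1\circ\gamma_1^{c}=\mathrm{id}$ by construction.

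The step I expect to be the crux is the treatment of the two edge circles: without the axial symmetric extension one would face a genuine corner problem for the Neumann Laplacian, whereas after reflection every edge point becomes either an interior point of a cross section or an ordinary point of the smooth lateral boundary of a longer (or infinite) cylinder, so that only the product-type theory of Subsection~\ref{sec1} and the scalar Neumann heat retraction are needed. The accompanying technical point is to verify that the axial extension and reflection operators act boundedly on the time-anisotropic Sobolev--Slobodeckij trace spaces involved --- which holds because the relevant spatial smoothness $1-1/p$ lies in $(0,1)$ --- and here the hypothesis $p\neq3$ enters, exactly as in Lemma~\ref{lm:Homogeneous-Equations}.
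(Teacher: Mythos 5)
Your overall architecture (treat the three boundary pieces separately, use axial extension/reflection to avoid the edge circles, add up correctors) is in the spirit of the paper, but the order is reversed and the two key technical steps are not secured. The paper first realizes the data on $\Ga_\text{in}$ and $\Ga_\text{out}$ by solving half-space problems in $H_{\mp h}=\R^2\times(\mp h,\pm\infty)$ (where the inhomogeneous Neumann theory of Denk--Hieber--Pr\"uss applies off the shelf, with Marschall's theorem supplying compatible initial values for $p>3$) and glues them with a cut-off; only then does it correct the lateral datum, by extending it by zero, reflecting evenly into a bounded $C^2$ ``pill'' domain $G_\pm$, and solving there, so that evenness kills the Neumann trace on the flat faces and the first step is not disturbed. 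Your reversed order forces the flat-face corrector to have \emph{exactly} vanishing lateral Neumann trace, and this is where your proposal has a genuine gap: the ``classical half-line Neumann trace retraction tensored over the cross section $A$'' does not do what you need. Applied literally (the one-dimensional coretraction in $(t,x_3)$ for each fixed $x'\in A$), it produces a function with two derivatives in $x_3$ but only the $W^{1-1/p}_p$-regularity in $x'$ inherited from the datum, hence not an element of $\E^\Om_p(T)=W^{1,p}((0,T),L^p(\Om))\cap L^p((0,T),W^{2,p}(\Om))$. To repair it you must solve a genuinely operator-valued half-line problem (with the Neumann Laplacian on $A$ inside the solution formula, so that the solution lies in $D(A_{i,1})$ and the lateral Neumann trace vanishes), and you must identify the resulting anisotropic boundary trace space with $\G^\text{out}_p(T)$; neither of these is ``classical'' and neither is argued. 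Note also that the semi-infinite cylinder $A\times(-\infty,h)$ itself has an edge along $\pa A\times\{h\}$, so you cannot simply invoke smooth-domain theory there, and an even reflection across $x_3=h$ produces a symmetry plane with \emph{zero} normal derivative rather than a prescribed one.

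There are two further, smaller issues. First, your lateral step needs a maximal regularity result with \emph{inhomogeneous} Neumann data on the infinite cylinder $A\times\R$; the cited cylindrical theory (\cite{NS2011}, \cite{NS2012}) provides $\cH^\infty$-calculus/$\cR$-sectoriality for the operators with homogeneous boundary conditions, not the retraction property of the lateral trace with data in $\G^\Si_p(T)$, so this has to be supplied (the paper avoids the issue by never solving an inhomogeneous problem on an unbounded cylinder: it only uses half-spaces and bounded $C^2$ domains). Second, for $p>3$ you must arrange the compatibility of the boundary data with an initial value at $t=0$ (the paper does this via \cite[Theorem 2]{Marschall87}); your proposal never mentions initial values, and your explanation of where $p\neq 3$ enters (boundedness of extension/reflection on spaces of smoothness $1-1/p$) is not the actual reason --- $p=3$ is excluded because it is the borderline case for the existence of the time trace at $t=0$ of the Neumann data, i.e.\ for whether such compatibility conditions must be imposed at all.
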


\begin{proof} Let $g^\text{in}\in \G^\text{in}_p(T),\; \gs\in\G^\Si_p(T)$ and $g^\text{out}\in\G^\text{out}_p(T)$.
We show that there exists a $c\in\E^\Om_p(T)$ such that
\begin{align}
\pa_\nu c&=g^\text{in}\quad \text{on}\;\Ga_\text{in},\label{eq:neum1}\\
\pa_\nu c&=\gs\quad\; \text{on}\;\Si,\label{eq:neum2}\\
\pa_\nu c&=g^\text{out}\;\;\; \text{on}\;\Ga_\text{out}. \label{eq:neum3}
\end{align} Note that we skip the index $i$ for better readability.
As before we work with $\Om=A\times(-h,h)$, such that $\Ga_\text{in}=A\times\{-h\}$ and $\Ga_\text{out}=A\times\{h\}$ in the present section. Let us define the halfspaces
\begin{equation*}
H_{-h}:=\R^2\times(-h,\infty),\quad H_{h}:=\R^2\times(-\infty,h).
\end{equation*} We proceed in two steps. \\
{\bfseries Step 1.} Due to \cite[Chapter 5]{Adams-Fournier:Sobolev-Spaces} there is an extension of $g^\text{in}$ to
\begin{equation*}
\tilde{g}^\text{in} \in \G^{\pa H_{-h}}_p(T)
\end{equation*} and of $g^\text{out}$ to
\begin{equation*}
\tilde{g}^\text{out} \in \G^{\pa H_{h}}_p(T).
\end{equation*} We have
\begin{equation*}
\tilde{g}^\text{in}(0) \in W^{1-3/p}_p(\pa H_{-h}), \quad \tilde{g}^\text{out}(0) \in W^{1-3/p}_p(\pa H_{h}).
\end{equation*} In case that $p>3$ we may choose a $\tilde{c}^\text{in}_0\in W^{2-2/p}_p(H_{-h})$ and $\tilde{c}^\text{out}_0\in W^{2-2/p}_p(H_{h})$ such that
\begin{align*}
\pa_\nu \tilde{c}^\text{in}_0&=\tilde{g}^\text{in}(0)\quad \text{on}\;\pa H_{-h}, \\
\pa_\nu \tilde{c}^\text{out}_0&=\tilde{g}^\text{out}(0)\quad \text{on}\;\pa H_{h},
\end{align*}
since $\partial_\nu : W^{2-2/p}_p(H_{-h}) \to W^{1-3/p}_p(\partial H_{-h})$ is 
a retraction due to \cite[Theorem 2]{Marschall87}.
In case $p<3$ we let
$\tilde{c}^\text{in}_0\in W^{2-2/p}_p(H_{-h})$ and $\tilde{c}^\text{out}_0\in W^{2-2/p}_p(H_{h})$ be arbitrary. Due to \cite[Proposition 4.6]{Denk-Hieber-Pruess:Maximal-Regularity-Inhomogeneous} we can solve the parabolic problem
\begin{equation*}
\left\{\begin{array}{rllll}\pa_t v^\text{in}-\Delta v^\text{in}&=&0&\text{in}&(0,T)\times H_{-h} \\
\pa_\nu v^\text{in} &=&\tilde{g}^\text{in}&\text{on}&(0,T)\times\pa H_{-h} \\
v^\text{in}(0)&=&\tilde{c}^\text{in}_0&\text{in}& H_{-h} \end{array}\right.
\end{equation*} with a unique
\begin{equation*}
v^\text{in} \in \E^{H_{-h}}_p(T)
\end{equation*} and analogously we solve
\begin{equation*}
\left\{\begin{array}{rllll}\pa_t v^\text{out}-\Delta v^\text{out}&=&0&\text{in}&(0,T)\times H_{h} \\
\pa_\nu v^\text{out} &=&\tilde{g}^\text{out}&\text{on}&(0,T)\times\pa H_{h} \\
v^\text{out}(0)&=&\tilde{c}^\text{out}_0&\text{in}& H_{h} \end{array}\right.
\end{equation*} with a unique
\begin{equation*}
v^\text{out} \in \E^{H_h}_p(T).
\end{equation*}
Let the cut-off function $\zeta \in C^\infty(\R,[0,1])$ satisfy
\begin{equation}\label{eq:retraction-cut-off-zeta}
\zeta|_{(-\infty,-h/3)}=1, \quad \zeta|_{(h/3,\infty)}=0.
\end{equation} Then the convex combination
\begin{equation*}
v:=\zeta v^\text{in}|_{\Om} + (1-\zeta) v^\text{out}|_\Om \in \E^\Om_p(T)
\end{equation*} fulfills the boundary conditions on top and bottom of $\Om$ by construction. \\
{\bfseries Step 2.} It remains to show that there exists a $w\in\E^\Om_p(T)$ such that
\begin{align*}
\pa_\nu w&=0\qquad\qquad \text{on}\;\Ga_\text{in},\\
\pa_\nu w&=\gs-\pa_\nu v\;\; \text{on}\;\Si,\\
\pa_\nu w&=0\qquad\qquad \text{on}\;\Ga_\text{out}.
\end{align*}
This problem can be reduced to an equation on a bounded $C^2$-domain, which works as follows.
To this end, first define $\Om_{-h}$ as the domain resulting from extending $\Om$
in some way boundedly and smoothly (at least in the $C^2$-sense) on the top. 
We also set $\Si_{-h}:=\partial\Omega_{-h}\setminus\overline{\Gamma}_{\text{in}}$. 
In a similar manner we define $\Om_{+h}$ and $\Si_{+h}$ by extending
$\Omega$ suitably at the bottom. Then, let $G_\pm$ denote the domains resulting from reflecting $\Om_{\pm h}$ at $h_\pm$ and set $\Ga_\pm:=\pa G_\pm$.
For instance, if the cross-section $A$ of $\Omega$ is a circle, we connect
half of a ball to $\Omega$ at $\Gamma_{\text{out}}$ resp.\
$\Gamma_{\text{in}}$. Then $G_\pm$ has the form of a `pill'.
It is clear that this way we always can find a suitable extension such that
$G_\pm$ is of class  $C^2$.

Let $\zeta$ be the cut-off function with the properties given in (\ref{eq:retraction-cut-off-zeta}). We extend $\zeta(\gs-\pa_\nu v)$ to
\begin{align*}
\tgi &\in \G^{\Si_{-h}}_p(T)\\
&=W^{1/2-1/2p}_p((0,T),L^p(\Si_{-h}))\cap L^p((0,T),W^{1-1/p}_p(\Si_{-h}))
\end{align*}
by $0$ and $\tgi$ to
\begin{equation*}
\hgi \in \G^{\Gamma_-}_p(T)
\end{equation*} by even reflection at $-h$.
Note that the extension by even reflection conserves the regularity
$W^{1-1/p}_p$ in axial direction.
Next, note that $\hgi(0)\in W^{1-1/3p}_p(\Gamma_-)$, if $p>3$. Hence we can use \cite[Theorem 2]{Marschall87} to obtain $\hci \in W^{2-2/p}_p(G_-)$ with
\begin{equation*}
\pa_\nu \hci =\hgi(0)\quad  \text{on}\;\Ga_-,
\end{equation*} if $p>3$.
Here we can assume w.l.o.g.\ that $\hci$ is even with respect to $\Ga_\text{in}$.
We solve the problem
\begin{equation}\label{eq:reflection-w^in}
\left\{\begin{array}{rllll}\pa_t w^\text{in}-\Delta w^\text{in}&=&0&\text{in}&(0,T)\times G_- \\
\pa_\nu w^\text{in} &=&\hgi&\text{on}&(0,T)\times\Gamma_- \\
w^\text{in}(0)&=&\hci&\text{in}& G_- \end{array}\right.
\end{equation} by \cite[Theorem 2.1]{Denk-Hieber-Pruess:Maximal-Regularity-Inhomogeneous} to obtain
\begin{equation*}
w^\text{in} \in \E^{G_-}_p(T).
\end{equation*}
Defining
\begin{equation*} \tilde{w}^\text{in}:=w^\text{in}|_{\Om_{-h}}
\in  \E^{\Om_{-h}}_p(T),
\end{equation*} we have $\pa_\nu \tilde{w}^\text{in}=\tgi$ on $\Si_{-h}$. Since $\hgi$ and $\hci$ are even in axial direction we have $\pa_\nu \tilde{w}^\text{in}=0$ on $\Ga_\text{in}$.
Analogously we proceed with $\Ga_\text{out}$. Here we extend
$(1-\zeta)(\gs-\pa_\nu v)$ to obtain a 
\begin{equation*}
\tilde{w}^\text{out}\in \E^{\Om_{+h}}_p(T)
\end{equation*} with 
$\pa_\nu \tilde{w}^\text{out}=(1-\zeta)(\gs-\pa_\nu v)$ on $\Si$ and $\pa_\nu \tilde{w}^\text{out}=0$ on $\Ga_\text{out}$.
Let the cut-off functions \\
$\tilde{\zeta}_1,\tilde{\zeta}_2\in C^\infty(\R,[0,1])$ satisfy
\begin{equation*}
{\tilde{\zeta}_1}=
\left\{\begin{array}{lll}
	1&: &(-\infty,h/2)\\
	0&:&(2h/3,\infty)	
\end{array}\right., \qquad {\tilde{\zeta}_2}=
\left\{\begin{array}{lll}
	0&:&(-\infty,-2h/3)\\
	1&:&(-h/2,\infty)
\end{array}\right..
\end{equation*} Then the combination
\begin{equation*}
w:=\tilde{\zeta}_1 \tilde{w}^\text{in}|_{\Om} + \tilde{\zeta}_2 \tilde{w}^\text{out}|_{\Om} \in \E^\Om_p(T)
\end{equation*} satisfies by construction $\pa_\nu w=0$ on $(0,T)\times\Ga_\text{in}$ and $\pa_\nu w=0$ on $(0,T)\times\Ga_\text{out}$. The remaining inhomogeneous boundary condition on $\Si$ is satisfied as well, since
\begin{align*}
\pa_\nu w&=\tilde{\zeta}_1\pa_\nu \tilde{w}^\text{in} + \tilde{\zeta}_2\pa_\nu \tilde{w}^\text{out}\\
&=\tilde{\zeta}_1\zeta(\gs-\pa_\nu v) +\tilde{\zeta}_2(1-\zeta )(\gs-\pa_\nu v)\\
&=\zeta (\gs-\pa_\nu v) + (1-\zeta)(\gs-\pa_\nu v) =\gs-\pa_\nu v \qquad \text{on}\;(0,T)\times\Si.
\end{align*}
Putting together step (i) and (ii) we define $c:=v+w \in \E^\Om_p(T)$
and obtain that $c$ satisfies (\ref{eq:neum1})-(\ref{eq:neum3}).
Thus, we have proved that 
there exists a bounded linear right-inverse to $\gamma_1$ which yields
that the trace operator 
$\gamma_1 : \E^\Om_p(T) \rightarrow \G^\text{in}_p(T)\times\G^\Si_p(T)\times\G^\text{out}_p(T)$ in fact is a retraction.
\end{proof}

We turn to the fully inhomogeneous Neumann system
\begin{equation}\label{eq:withoutPerturbation}
\left\{\begin{array}{rclcl}
\pa_t c_i- d_i\Delta c_i &=&f_i& \text{in}& (0,T)\times\Om, \\
\pa_t \cs_i - d^{\scriptscriptstyle\Si}_i \Delta_\Si \cs_i &=&f^{\scriptscriptstyle\Si}_i & \text{on}& (0,T)\times \Si, \\
-d_i\pa_\nu c_i&=&g^\text{in}_i& \text{on}& (0,T)\times \Ga_\text{in}, \\
-d_i\pa_\nu c_i &=&\gs_i& \text{on}& (0,T)\times \Si,\\
-d_i\pa_\nu c_i &=& g^\text{out}_i & \text{on}& (0,T)\times \Ga_\text{out}, \\
-d^{\scriptscriptstyle\Si}_i\pa_{\nu_\Si} \cs_i &=&0 &\text{on}& (0,T)\times \pa \Si, \\
c_i(0)&=&c_{0,i}   &\text{in}&\Om,   \\
\cs_i(0)&=&\cs_{0,i} & \text{in}& \Si.
\end{array}\right. \quad (i=1,...,N)
\end{equation}

\begin{lemma}\label{lm:withoutPerturbation}
Let $1<p<\infty$ with $p\neq 3$ and let $T>0$ be given. Then (\ref{eq:withoutPerturbation}) admits a unique solution
\begin{equation*} (c_i,\cs_i)\in \E^\Om_p(T)\times \E^\Si_p(T)
\end{equation*} if and only if the data satisfies the regularity conditions
\begin{align*}
(f_i,\fs_i,g^\text{in}_i,\gs_i,g^\text{out}_i,0,c_{0,i},\cs_{0,i})\in \F^{\Om,\Si}_{p,I}(T)
\end{align*} and, in case of $p>3$, the compatibility conditions
\begin{align*}\begin{array}{rcll}
-d_i\pa_\nu c_{0,i}&=&g^\text{in}_i|_{t=0}& \text{on}\;\Ga_\text{in},\\
-d_i\pa_\nu c_{0,i}&=&\gs_i|_{t=0}& \text{on}\;\Si,\\
-d_i\pa_\nu c_{0,i}&=&g^\text{out}_i|_{t=0}&  \text{on}\;\Ga_\text{out},\\
-\ds_i\pa_{\nu_\Si} \cs_{0,i}&=&0&  \text{on}\;\pa\Si.\end{array}
\end{align*} Additionally, the corresponding solution operator ${_0\cS}_T$ with respect to homogeneous initial values satisfies
\begin{equation}\label{eq:solution-operator-estimate}
\Vert {_0\cS}_\tau\Vert_{\mathscr{L}({{_0\F}^{\Om,\Si}_{p}(\tau)}^N,{_0\E^\Om_p(\tau)}^N\times{_0\E^\Si_p(\tau)}^N)}
\leq M\qquad (0<\tau<T)
\end{equation} for a constant $M>0$ independent of $\tau$.
\end{lemma}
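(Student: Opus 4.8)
The idea is to decouple the bulk and surface parts of the system, solve each using the homogeneous theory of \lemref{Homogeneous-Equations}, and then reinsert the inhomogeneous Neumann data on the bulk side via the retraction constructed in \lemref{Surjectivity-of-Trace}. Since the surface equation in \eqref{eq:withoutPerturbation} is fully decoupled from the bulk (its right-hand side $\fs_i$ and its boundary condition on $\pa\Si$ do not involve $c_i$), part b) of \lemref{Homogeneous-Equations} applies directly and yields a unique $\cs_i\in\E^\Si_p(T)$ under precisely the stated regularity condition $\fs_i\in\F^\Si_p(T)$, $\cs_{0,i}\in\I^\Si_p$ and, for $p>3$, the compatibility condition $-\ds_i\pa_{\ns}\cs_{0,i}=0$ on $\pa\Si$. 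So the real content is the inhomogeneous Neumann problem for $c_i$ in $\Om$.

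For the bulk part I would argue as follows. First reduce to homogeneous Neumann data: by \lemref{Surjectivity-of-Trace} the trace operator $\gamma_1$ is a retraction, so there is a bounded right inverse producing $\bar c\in\E^\Om_p(T)$ with $-d_i\pa_\nu\bar c=g^\text{in}_i$ on $\Ga_\text{in}$, $=\gs_i$ on $\Si$, $=g^\text{out}_i$ on $\Ga_\text{out}$ (rescaling the data of \lemref{Surjectivity-of-Trace} by $-1/d_i$). Writing $c_i=\bar c+\hat c_i$, the new unknown $\hat c_i$ solves a problem of the form \eqref{eq1.2} with modified right-hand side $f_i-(\pa_t-d_i\Delta)\bar c\in\F^\Om_p(T)$ and modified initial value $c_{0,i}-\bar c(0)\in\I^\Om_p$, subject to \emph{homogeneous} Neumann conditions on all of $\pa\Om$. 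When $p>3$, the compatibility conditions in the statement are exactly what is needed to guarantee that $-d_i\pa_\nu(c_{0,i}-\bar c(0))=0$ on $\pa\Om$, so part a) of \lemref{Homogeneous-Equations} applies and gives a unique $\hat c_i\in\E^\Om_p(T)$; adding back $\bar c$ gives the unique $c_i$. Uniqueness overall follows because the map $(c_i,\cs_i)\mapsto(\text{data})$ is injective on $\E^\Om_p(T)\times\E^\Si_p(T)$: a null solution has zero bulk and surface data, hence vanishes by the uniqueness parts of \lemref{Homogeneous-Equations} applied after the same reduction. The necessity of the regularity and compatibility conditions is read off from the trace theory: every $(c_i,\cs_i)\in\E^\Om_p(T)\times\E^\Si_p(T)$ has traces landing in the asserted spaces, and for $p>3$ the time trace at $t=0$ of the Neumann boundary data must agree with $-d_i\pa_\nu c_{0,i}$.

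For the uniform bound \eqref{eq:solution-operator-estimate} on the solution operator with homogeneous initial values, I would work in the zero-time-trace spaces $_0\F^{\Om,\Si}_p(\tau)$. Here the key point is that all the maps involved --- the right inverse of $\gamma_1$ from \lemref{Surjectivity-of-Trace}, the solution operators for \eqref{eq1.2} and \eqref{eq1.3} --- can be chosen on the zero-time-trace subspaces, where there are no compatibility conditions to worry about and the constants do not depend on the length of the interval. This $\tau$-independence is standard for maximal regularity operators once one restricts to zero initial data: extension by zero in time is an isometry $_0\F^{\Om,\Si}_p(\tau)\hookrightarrow{_0\F}^{\Om,\Si}_p(T)$, and restriction $_0\E(T)\to{_0\E}(\tau)$ is a contraction, so composing with the fixed operator on $(0,T)$ gives a bound uniform in $\tau<T$. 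The main obstacle is bookkeeping rather than conceptual: one must check that the retraction built in \lemref{Surjectivity-of-Trace} restricts to the zero-time-trace spaces with a norm independent of $\tau$ --- this uses that the parabolic solution operators on the half spaces and on the `pill' domains $G_\pm$, as well as the cut-off multiplications, all commute with the extension-by-zero construction --- so that the composite estimate carries the constant $M$ through uniformly. Once that is in place, \eqref{eq:solution-operator-estimate} is immediate.
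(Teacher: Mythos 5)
Your proposal is correct and follows essentially the same route as the paper: lift the inhomogeneous Neumann data via the retraction of Lemma~\ref{lm:Surjectivity-of-Trace}, correct by the homogeneous Neumann problems of Lemma~\ref{lm:Homogeneous-Equations} (the surface equation being decoupled), and obtain the $\tau$-uniform bound by extending zero-time-trace data to $(0,T)$ and restricting the fixed solution operator, exactly as the paper does via the extension operator of Pr\"u{\ss}--Saal--Simonett. The only cosmetic imprecision is calling the zero-extension an isometry; it is merely bounded with a $\tau$-independent constant, which suffices.
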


\begin{proof}
By Lemma \ref{lm:Surjectivity-of-Trace} for given $g^\text{in}_i\in\G^\text{in}_p(T)$, $\gs_i\in\G^\Si_p(T)$, $g^\text{out}_i\in\G^\text{out}_p(T)$ there exists a $c^1_i\in\E^\Om_p(T)$ with
\begin{align*}
-d_i\pa_\nu c_i^1&=g^\text{in}_i \quad \text{on}\;(0,T)\times\Ga_\text{in}, \\
-d_i\pa_\nu c_i^1&=\gs_i \quad \text{on}\;(0,T)\times\Si, \\
-d_i\pa_\nu c_i^1&=g^\text{out}_i \quad \text{on}\;(0,T)\times\Ga_\text{out}.
\end{align*}
Secondly, due to Lemma \ref{lm:Homogeneous-Equations} for $f_i\in \F^\Om_p(T)$, $c_{0,i}\in\I^\Om_p(T)$ we find a unique $c^2_i\in\E^\Om_p(T)$ such that
\begin{equation*}
\left\{\begin{array}{rclcl}
\pa_t c_i^2- d_i\Delta c_i^2 &=&f_i-(\pa_t-d_i\Delta)c_i^1& \text{in}& (0,T)\times\Om, \\
-d_i\pa_\nu c_i^2&=&0& \text{on}& (0,T)\times \Ga_\text{in}, \\
-d_i\pa_\nu c_i^2 &=&0& \text{on}&  (0,T)\times\Si, \\
-d_i\pa_\nu c_i^2 &=&0& \text{on}&  (0,T)\times\Ga_\text{out}, \\
c_i^2|_{t=0}&=&c_{0,i}-c_i^1|_{t=0}& \text{in}& \Om. \end{array} \right.
\end{equation*} By construction $c_i:=c_i^1+c_i^2\in\E^\Om_p(T)$
satisfies (\ref{eq:withoutPerturbation}).
By employing the extension operator in zero time trace spaces from
\cite[Proposition 6.1]{Pruess-Saal-Simonett:Stefan-Analytic-Classical}
and that its norm is independent of $\tau<T$ the estimate for  
the solution operator readily follows.
\end{proof}

\subsection{Advection terms}\label{subsec:perturbation}
Let $5/3<p<\infty$ with $p\neq 3$ and $T>0$ be given. Assume that $u$ satisfies $(A^\text{vel})$.
We prove Proposition \ref{prop:maxRegPerturbedSystem} by a perturbation argument.
To this end, we have to show that the results obtained in Lemma \ref{lm:withoutPerturbation} carry over when adding the two perturbation terms $(u\cdot \nabla)c_i$ and $(u\cdot\nu)c_i$.
Let $\U^\Om_p(T)$ be given as in
(\ref{eq:regularity-calss-velocity}) and set
\begin{align*}
\U^\text{in}_p(T):=W^{1-1/2p}_p((0,T),L^p(\Ga_\text{in},\R^3))\cap L^p((0,T),W^{2-1/p}_p(\Ga_\text{in},\R^3))
\end{align*} for the Dirichlet trace space.

{\it Proof of Proposition \ref{prop:maxRegPerturbedSystem}.} The proof is carried out in three steps.

{\bfseries Step 1.} We estimate both perturbation terms occuring in (\ref{eq:withPerturbation}). Let $\eps\in(0,1)$ be sufficiently small. Then the following algebra properties hold for all $5/3<p<\infty$:
\begin{align*}
\U^\Om_p(T)&\cdot W^{1/2-\eps/2}_p((0,T),L^p(\Om,\R^3))\cap L^p((0,T),W^{1-\eps}_p(\Om,\R^3)) \hookrightarrow
\F^\Om_p(T),\\
\U^\text{in}_p(T)&\cdot
W^{1-1/2p-\eps/2}_p((0,T),L^p(\Ga_\text{in}))\cap L^p((0,T),
W^{2-1/p-\eps}_p(\Ga_\text{in}))\hookrightarrow \G^\text{in}_p(T).
\end{align*}
The first embedding follows by a direct calculation and the second
by taking trace of the embedding
\begin{align*}
\U^\Om_p(T)&\cdot W^{1-\eps/2}_p((0,T),L^p(\Om,\R^3))\cap
L^p((0,T),W^{2-\eps}_p(\Om,\R^3)) \\
&\hookrightarrow
W^{1/2}_p((0,T),L^p(\Omega))\cap L^p((0,T),W^{1,p}(\Omega)),
\end{align*}
which follows by a straight forward calculation, too.
For $0<\tau< T$ we infer the following estimates:
\begin{align*}
\Vert (u\cdot \nabla)c_i\Vert_{_0\F^{\Om}_p(\tau)} &\leq C \Vert u\Vert_{\U^\Om_p(T)} \Vert \nabla c_i\Vert_{{_0W}^{1/2-\eps/2}_p((0,\tau),L^p(\Om,\R^3))\cap L^p((0,\tau),W^{1-\eps}_p(\Om,\R^3))} \\
&\leq C \Vert u\Vert_{\U^p_p(T)} \Vert c_i\Vert_{{_0W}^{1-\eps/2}_p((0,\tau),L^p(\Om))\cap L^p((0,\tau),W^{2-\eps}_p(\Om))} \\
&\leq C\tau^\eta \Vert c_i\Vert_{_0\E^\Om_p(\tau)} \qquad\qquad (c_i\in{_0\E}^\Om_p(\tau))
\end{align*} with a constant $C>0$ and an exponent $\eta>0$ both being independent of $\tau$. Analogously
\begin{align*}
\Vert (u\cdot \nu)c_i\Vert_{_0\G^\text{in}_p(\tau)} &\leq C \Vert u\Vert_{\U^\text{in}_p(T)} \Vert c_i\Vert_{{_0W}^{1-1/2p-\eps/2}_p((0,\tau),L^p(\Ga_\text{in}))\cap L^p((0,\tau),W^{2-1/p-\eps}_p(\Ga_\text{in}))} \\
&\leq C \Vert u\Vert_{\U^\Om_p(T)} \Vert c_i\Vert_{{_0W}^{1-\eps/2}_p((0,\tau),L^p(\Om))\cap L^p((0,\tau),W^{2-\eps}_p(\Om))} \\
&\leq C\tau^\eta \Vert c_i\Vert_{_0\E^\Om_p(\tau)} \qquad\qquad (c_i\in{_0\E}^\Om_p(\tau))
\end{align*} with a constant $C>0$ and an exponent $\eta>0$ both being independent of $\tau$. It follows that the linear operator
\begin{align*}
\cB:{_0\E^\Om_p(\tau)}^N\!\!\!\times{_0\E^\Si_p(\tau)}^N\rightarrow{_0\F^{\Om,\Si}_{p}(\tau)}^N\\
\cB(c,\cs)=\left((u\cdot \nabla) c_i,\,0,\,(u\cdot\nu)c_i,\,0,\,0,\,0\right)_{i=1,...,N}
\end{align*} may be estimated by
\begin{align}\label{eq:perturbation-operator-estimate}
&\Vert \cB(c,\cs)\Vert_{{_0\F^{\Om,\Si}_{p}(\tau)}^N}  \nonumber \\
&\leq C\tau^\eta \Vert(c,\cs)\Vert_{{_0\E^\Om_p(\tau)}^N\times{_0\E^\Si_p(\tau)}^N}, \;\;((c,\cs)\in
{_0\E^\Om_p(\tau)}^N\!\!\!\times{_0\E^\Si_p(\tau)}^N)
\end{align} with a constant $C>0$ and an exponent $\eta>0$ both independent of $\tau<T$.

{\bf Step 2.} We give the construction of the solution of (\ref{eq:withPerturbation}) as a sum
$c_i=\hat{c}_i+\bar{c}_i$, $\cs_i=\csh_i+\csb_i$.
Let $(\hat{c}_i,\csh_i)\in \E^\Om_p(\tau)\times\E^\Si_p(\tau)$
be the solution to (\ref{eq:withoutPerturbation}) with $g^\text{in}_i$
replaced by some $\hat g^\text{in}_i$ satisfying the compatibility
condition $-d_i\partial_\nu c_{0,i}=\hat g^\text{in}_i(0)$ in case $p>3$
and which exists according to Lemma~\ref{lm:withoutPerturbation}.
Next, we set
\begin{align*}
\bar{f}_i=-(u\cdot\nabla)\hat{c}_i,\qquad
\bar{g}^\text{in}_i = g^\text{in}_i-\hat{g}^\text{in}_i
-(u\cdot\nu)\hat{c}_i.
\end{align*}
Note that then for $p>3$ the compatibility condition
\begin{align*}
\bar{g}^\text{in}_i|_{t=0}=g^\text{in}_i|_{t=0}-\hat{g}^\text{in}_i|_{t=0}-(u|_{t=0}\cdot\nu)c_{0,i} =0
\end{align*} is satisfied by construction.
Thus, the task is reduced to prove that for $0<\tau\leq T$
there exists a unique
solution $(\bar{c}_i,\csb_i)\in\E^\Om_p(\tau)\times\E^\Si_p(\tau)$ of
\begin{equation}\label{eq:cBar}
\left\{\begin{array}{rclcl}
\pa_t \bar{c}_i+(u\cdot\nabla)\bar{c}_i- d_i\Delta \bar{c}_i &=&\bar{f}_i& \text{in}& (0,\tau)\times\Om, \\
\pa_t \csb_i- d^{\scriptscriptstyle\Si}_i \Delta_\Si \csb_i &=&0& \text{on}& (0,\tau)\times \Si, \\
(u\cdot\nu)\bar{c}_i-d_i\pa_\nu \bar{c}_i&=&\bar{g}^\text{in}_i& \text{on}& (0,\tau)\times \Ga_\text{in}, \\
-d_i\pa_\nu \bar{c}_i &=&0& \text{on}& (0,\tau)\times \Si,\\
-d_i\pa_\nu \bar{c}_i &=&0 & \text{on}& (0,\tau)\times \Ga_\text{out}, \\
-d^{\scriptscriptstyle\Si}_i\pa_{\nu_\Si} \csb_i &=&0 &\text{on}& (0,\tau)\times \pa \Si, \\
\bar{c}_i|_{t=0}&=&0   &\text{in}&\Om,   \\
\csb_i|_{t=0}&=&0 & \text{in}& \Si.
\end{array}\right.
\end{equation}
This will be done in the final step.

{\bf Step 3.} We show the unique solvability of (\ref{eq:cBar}) on
some interval $(0,\tau)$. The proof will show that $\tau$ is independent
of the data
$f_i,\fs_i,g^\text{in}_i,\gs_i,g^\text{out}_i,c_{0,i},\cs_{0,i}$.
Due to the linearity of the system solvability then carries over
to the whole time interval $(0,T)$.

We apply a Neumann series argument to (\ref{eq:cBar}). To this end, let us reformulate (\ref{eq:cBar}) by means of the operators ${_0\cL}_\tau$ induced by the left-hand side of (\ref{eq:cBar}) and $\cB$. Let
\begin{equation*}
\bar{F}_i=(\bar{f}_i,0,\bar{g}^\text{in}_i,0,0,0)\in {_0\F}^{\Om,\Si}_p(\tau)^N, \qquad (i=1,...,N)
\end{equation*} such that (\ref{eq:cBar}) is equivalent to
\begin{equation*}
{_0\cL_\tau}(\bar{c},\csb)+\cB(\bar{c},\csb)=\bar{F} \qquad ((\bar{c},\csb)\in
{_0\E^\Om_p(\tau)}^N\!\!\!\times{_0\E^\Si_p(\tau)}^N).
\end{equation*} Due to
\begin{equation*}
{_0\cL_\tau} +\cB =(I+\cB\,{_0\cS}_\tau)\,{_0\cL}_\tau
\end{equation*} with the solution operator $_0\cS_\tau={_0\cL}_\tau^{-1}$ from Lemma \ref{lm:withoutPerturbation}, the invertibility of
$(I+\cB\,{_0\cS_\tau})$ from ${_0\F}^{\Om,\Si}_p(\tau)$ to
${_0\E^\Om_p(\tau)}^N\!\!\!\times{_0\E^\Si_p(\tau)}^N$ and, in turn, of ${_0\cL_\tau} +\cB $ from ${_0\E^\Om_p(\tau)}^N\!\!\!\times{_0\E^\Si_p(\tau)}^N$ to ${_0\F}^{\Om,\Si}_p(\tau)$ readily follows from (\ref{eq:perturbation-operator-estimate}) if we choose $\tau$ so small that $C\tau^\eta\,M<1$ with $M$ from  (\ref{eq:solution-operator-estimate}). Note that this is possible since $M$ is independent of $\tau<T$.\hfill $\square$

\section{Local well-posedness}\label{sec:loc}

In this section we derive unique solvability of
\begin{equation}\label{eq:locExSystem}
\left\{\begin{array}{rclcl}
\pa_t c_i+(u\cdot \nabla)c_i- d_i\Delta c_i &=&f_i& \text{in}& (0,T)\times\Om, \\
\pa_t \cs_i - d^{\scriptscriptstyle\Si}_i \Delta_\Si \cs_i &=&r^\text{sorp}_i(c_i,\cs_i)+r^\text{ch}_i(\cs) & \text{on}& (0,T)\times \Si, \\
(u\cdot\nu)c_i-d_i\pa_\nu c_i&=&g^\text{in}_i& \text{on}& (0,T)\times \Ga_\text{in}, \\
-d_i\pa_\nu c_i &=&r^\text{sorp}_i(c_i,\cs_i)& \text{on}& (0,T)\times \Si,\\
-d_i\pa_\nu c_i &=& 0 & \text{on}& (0,T)\times \Ga_\text{out}, \\
-d^\Si_i\pa_{\nu_\Si} \cs_i &=&0 &\text{on}& (0,T)\times \pa \Si, \\
c_i|_{t=0}&=&c_{0,i}&\text{in}&\Om,\\
\cs_i|_{t=0}&=&\cs_{0,i} & \text{on}& \Si,
\end{array}\right.
\end{equation} in the strong $L^p$-sense, locally in time. Recall when the index $i$ is used we mean $i=1,...,N$, and have e.g.
$c=(c_i)_{i=1,...,N}$ and $\cs=(\cs_i)_{i=1,...,N}$.

Sorption and reaction terms are required to satisfy the following assumptions:

\vspace{0,1cm}
{\bfseries ($\text{A}_\text{F}^{\text{sorp}}$)} The sorption rate acts as a function
satisfying
\begin{align*}
&r^\text{sorp}_i=r^\text{sorp}_i(c_i,\cs_i), \quad\!\! r^\text{sorp}_i\in C^2(\R^2), \quad\!\!
\nabla r^\text{sorp}_i\in BC^1(\R^2,\R^2).
\end{align*}

\vspace{0,1cm}
{\bfseries ($\text{A}_\text{M}^{\text{sorp}}$)}
The sorption rate increases monotonically in $c_i$ and decreases monotonically in $\cs_i$.

\vspace{0,1cm}
{\bfseries ($\text{A}_\text{B}^{\text{sorp}}$)}
The sorption rate admits linear bounds
\begin{align*}
-k^\text{de}_i \cs_i \leq r^\text{sorp}_i(c_i,\cs_i) \leq k^\text{ad}_i c_i \qquad (c_i,\cs_i\geq 0)
\end{align*} for given adsorption and desorption constants $k^\text{ad}_i,k^\text{de}_i>0$.

\vspace{0,25cm}
{\bfseries ($\text{A}^{\text{ch}}_\text{F}$)}
We assume that the chemical reactions fulfill
\begin{align*}
& r^\text{ch}_i=r^\text{ch}_i(\cs), \quad r^\text{ch} \in C^1([0,\infty)^N, \R^N), \qquad i=1,..,N.
\end{align*}

\vspace{0,1cm}
{\bfseries ($\text{A}^{\text{ch}}_\text{N}$)} The reaction is supposed to be quasi-positive, i.e.
\begin{align*}
r^\text{ch}_i(y)\geq 0, \qquad (y\in [0,\infty)^N, \;y_i=0).
\end{align*}

\vspace{0,1cm}
{\bfseries ($\text{A}^{\text{ch}}_\text{P}$)} The reaction admits polynomial growth, i.e.\ there exist a constant $M>0$ and an exponent $\gamma\in[1,\infty)$ if $p\in[2,\infty)$ and $\gamma=\frac{1}{1-p/2}\in [1,6]$ if $p\in(5/3,2)$, such that
\begin{align*}
|r^\text{ch}(y)|\leq M \left(1+|y|^\gamma\right) \qquad (y\in [0,\infty)^N).
\end{align*} Additionally, suppose the Jacobian fulfills
\begin{align*}
|({r^\text{ch}})'(y)|\leq M \left(1+|y|^{\gamma-1}\right) \qquad (y\in [0,\infty)^N ).
\end{align*}

\vspace{0,25cm}
\begin{remark}\label{rk:polynomial-growth} Let us comment on the polynomial growth conditions of $r^\text{ch}$ and $({r^\text{ch}})'$.
\begin{itemize}
\item[a)] In ($\text{A}^{\text{ch}}_\text{P}$) we restrict to $\gamma\in[1,6]$ if $p\in(5/3, 2)$. This is due to the embedding
\begin{equation*}
{_0\E}^\Si_p(T)\hookrightarrow L^{p\gamma}(\Si_T)
\end{equation*} for $p>5/3$, cf.\ \cite{Amann-Anisotropic-Function-Spaces}, which we employ in the proof of the local existence result. In case $p\geq 2$ only an arbitrary polynomial growth is required.
\item[b)] The growth rate $\gamma$ of $r^\text{ch}$ in ($\text{A}^{\text{ch}}_\text{P}$) yields that $r^\text{ch}$ acts as a Nemytskij operator
\begin{align*}
r^\text{ch}:{L^{p\gamma}(\Si_T)}^N\rightarrow {L^p(\Si_T)}^N,
\end{align*} cf.\ \cite[Theorem 3.1]{AZ90}. Analogously the growth rate $\gamma-1$ of $(r^\text{ch})'$ in ($\text{A}^{\text{ch}}_\text{P}$) yields
\begin{align*}
({r^\text{ch}})':L^{p\gamma}(\Si_T)^N\rightarrow L^{p\gamma/(\gamma-1)}(\Si_T)^{N\times N}.
\end{align*} In particular, $({r^\text{ch}})'$ maps a ball $\bar{B}_\delta$ of radius $\delta>0$ in $L^{p\gamma}(\Si_T)^N$ into a ball of radius $k(\delta)>0$ in $L^{p\gamma/(\gamma-1)}(\Si_T)^{N\times N}$. Hence an  application of the mean value theorem to the function $r^\text{ch}$ and H\"older's inequality with
\begin{align*}
\frac{1}{p}=\frac{1}{p\gamma/(\gamma-1)}+\frac{1}{p\gamma}
\end{align*} yields
\begin{align*}
\Vert r^\text{ch}&(\cs)-r^\text{ch}(\zs)\Vert_{L^p(\Si_T)^N} \\
&\leq
\sup_{\vs \in \bar{B}_\delta}\Vert (r^\text{ch})'(\vs)\Vert_{L^{p\gamma/(\gamma-1)}(\Si_T)^{N\times N}}\Vert \cs-\zs \Vert_{L^{p\gamma}(\Si_T)^N}\\
&\leq k(\delta) \Vert \cs-\zs \Vert_{L^{p\gamma}(\Si_T)^N} \qquad (\cs,\zs\in \bar{B}_\delta),
\end{align*} i.e.\ $r^\text{ch}$ acts as a locally Lipschitz continuous Nemytskij operator, cf.\ \cite[Theorem 3.10]{AZ90}.
\end{itemize}
\end{remark}

\vspace{0,25cm}
For $y\in\R^N$ let us denote $y^+=(y_i^+)_{i=1,...,N}$ where as before $y^+_i=\max\{0,y_i\}$. Since we do not know a priori whether a corresponding solution $(c,\cs)$ is nonnegative, we extend $r^\text{ch}$ as
\begin{align*}
r^\text{ch}_{i,+}:\R^N\rightarrow \R, \qquad r^\text{ch}_{i,+}(y):= r^\text{ch}_i({y}^+) \qquad (y\in \R^N).
\end{align*} Then (\ref{eq:locExSystem}) remains meaningful even if $c$ or $\cs$ take negative values.

The outcome of this section is the proof of \Thmref{Local-WP}. It
is based on maximal regularity of the linear system
(\ref{eq:withPerturbation}) proved in Section~\ref{sec:linear_equations}
and the contraction mapping principle. We start by proving the nonnegativity
of $c_i$ and of $\cs_i$.

\subsection{Nonnegativity of Concentrations}

For nonnegative initial
concentrations we have the following result.

\begin{lemma}\label{lm:nonneg}\lemlabel{Nonneg} (Nonnegativity)
Let $1<p<\infty$. Let $T>0$ and let $c_{0,i}\in \I^{\Om,+}_p$, $\cs_{0,i}\in \I^{\Si,+}_p$ and $g^\text{in}_i \in \G^\text{in}_p(T)^-$ be given. Suppose $u$ satisfies ($\text{A}^\text{vel}$), $r^\text{sorp}$ satisfies ($A_\text{F}^\text{sorp}$), ($A_\text{M}^\text{sorp}$), ($A_\text{B}^\text{sorp}$)
and $r^\text{ch}$ fulfills ($A^\text{ch}_\text{F}$), ($A^\text{ch}_\text{N}$), ($A^\text{ch}_\text{P}$).
Moreover, suppose $(c_i,\cs_i)\in\E^\Om_p(T)\times\E^\Si_p(T)$ is a strong $L^p$-solution of (\ref{eq:locExSystem}).  Then
\begin{align*}
c_i\geq 0 \quad \text{a.e.}\;\text{in}\;\Om_T, \qquad \cs_i \geq 0 \quad \text{a.e.}\; \text{on}\; \Si_T
\end{align*} hold true.
\end{lemma}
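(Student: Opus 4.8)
The strategy is the standard one for nonnegativity in reaction–diffusion(–sorption) systems: test the equations against the negative parts of the concentrations and use Gronwall's inequality, exploiting the quasi-positivity $(\text{A}^\text{ch}_\text{N})$ of $r^\text{ch}$, the monotonicity $(\text{A}_\text{M}^\text{sorp})$ and the sign bounds $(\text{A}_\text{B}^\text{sorp})$ of $r^\text{sorp}$, the sign condition $g^\text{in}_i\le 0$, and the boundary condition $u\cdot\nu\le 0$ on $\Gamma_\text{in}$. Since $(c_i,c^\Sigma_i)\in\E^\Omega_p(T)\times\E^\Sigma_p(T)$, both $c_i$ and $c^\Sigma_i$ lie in $C([0,T];W^{2-2/p}_p)\hookrightarrow C([0,T];L^2)$ on their respective domains (after localizing in time so all quantities are finite), so the functions $t\mapsto\tfrac12\|c_i^-(t)\|_{L^2(\Omega)}^2$ and $t\mapsto\tfrac12\|(c^\Sigma_i)^-(t)\|_{L^2(\Sigma)}^2$ are absolutely continuous; here one uses that $f\mapsto f^-$ is Lipschitz and maps $W^{1,p}$ into $W^{1,p}$ with $\nabla f^- = -\mathbf 1_{\{f<0\}}\nabla f$, and that $c_i^-|_{t=0}=0$, $(c^\Sigma_i)^-|_{t=0}=0$ by the nonnegativity of the initial data.

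First I would treat the bulk equation. Multiply $\partial_t c_i + (u\cdot\nabla)c_i - d_i\Delta c_i = f_i$ by $-c_i^-$ and integrate over $\Omega$. Wait — note that the bulk equation in \eqref{eq:locExSystem} has right-hand side $f_i$, not zero; for the application to \eqref{eq:cat} one has $f_i=0$, and more generally we must assume (as the statement tacitly does via "strong $L^p$-solution of \eqref{eq:locExSystem}") that $f_i\ge 0$, or simply restrict to $f_i=0$ as in \eqref{eq:cat}. Assuming $f_i=0$: the term $\int_\Omega (-c_i^-)\partial_t c_i = \tfrac{d}{dt}\tfrac12\|c_i^-\|_{L^2(\Omega)}^2$; the diffusion term gives, after integration by parts, $-d_i\int_\Omega \nabla c_i^-\cdot\nabla c_i^- \,dx \cdot(-1)\cdot(-1)$ — carefully, $\int_\Omega(-c_i^-)(-d_i\Delta c_i) = -d_i\int_{\partial\Omega}(-c_i^-)\partial_\nu c_i + d_i\int_\Omega \nabla(-c_i^-)\cdot\nabla c_i = -d_i\|\nabla c_i^-\|_{L^2(\Omega)}^2 + d_i\int_{\partial\Omega}c_i^-\partial_\nu c_i$. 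On $\Gamma_\text{in}$, $-d_i\partial_\nu c_i = g^\text{in}_i - (u\cdot\nu)c_i$, so $d_i\int_{\Gamma_\text{in}}c_i^-\partial_\nu c_i = -\int_{\Gamma_\text{in}}c_i^-g^\text{in}_i + \int_{\Gamma_\text{in}}(u\cdot\nu)c_i c_i^-$; the first term is $\le 0$ since $g^\text{in}_i\le 0$ and $c_i^-\ge 0$, and the second equals $-\int_{\Gamma_\text{in}}(u\cdot\nu)(c_i^-)^2\le 0$ since $u\cdot\nu\le 0$. On $\Gamma_\text{out}$, $\partial_\nu c_i=0$. On $\Sigma$, $-d_i\partial_\nu c_i = r^\text{sorp}_i(c_i,c^\Sigma_i)$, so $d_i\int_\Sigma c_i^-\partial_\nu c_i = -\int_\Sigma c_i^- r^\text{sorp}_i(c_i,c^\Sigma_i)$; on the set $\{c_i<0\}$ we have $r^\text{sorp}_i(c_i,c^\Sigma_i)\le r^\text{sorp}_i(0,c^\Sigma_i)\le r^\text{sorp}_i(0,0)=0$ when $c^\Sigma_i\ge 0$ — but we don't yet know $c^\Sigma_i\ge 0$, so this coupling term must be retained and estimated together with the surface equation. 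Finally the convection term $\int_\Omega(-c_i^-)(u\cdot\nabla)c_i = \int_\Omega(u\cdot\nabla)\big(\tfrac12(c_i^-)^2\big) = -\tfrac12\int_\Omega(\div u)(c_i^-)^2 + \tfrac12\int_{\partial\Omega}(u\cdot\nu)(c_i^-)^2 = \tfrac12\int_{\partial\Omega}(u\cdot\nu)(c_i^-)^2$ using $\div u=0$; on $\Sigma$ this vanishes ($u\cdot\nu=0$), on $\Gamma_\text{in}$ it is $\le 0$, on $\Gamma_\text{out}$ it is $\ge 0$ and must be absorbed — actually $\int_{\Gamma_\text{out}}(u\cdot\nu)(c_i^-)^2\ge 0$ appears with a $+$ sign and would break the estimate, so one combines it with a trace/interpolation bound or, better, notes the problem can be handled by a standard Gronwall argument after moving it to the right and controlling $\|c_i^-\|_{L^2(\Gamma_\text{out})}^2\le \varepsilon\|\nabla c_i^-\|_{L^2(\Omega)}^2 + C_\varepsilon\|c_i^-\|_{L^2(\Omega)}^2$, absorbing the $\varepsilon$-term into $-d_i\|\nabla c_i^-\|^2$.

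Next I would treat the surface equation: multiply $\partial_t c^\Sigma_i - d^\Sigma_i\Delta_\Sigma c^\Sigma_i = r^\text{sorp}_i(c_i,c^\Sigma_i) + r^\text{ch}_{i,+}(c^\Sigma)$ by $-(c^\Sigma_i)^-$ and integrate over $\Sigma$. The time term gives $\tfrac{d}{dt}\tfrac12\|(c^\Sigma_i)^-\|_{L^2(\Sigma)}^2$; the Laplace–Beltrami term gives $-d^\Sigma_i\|\nabla_\Sigma(c^\Sigma_i)^-\|_{L^2(\Sigma)}^2\le 0$ after integrating by parts and using $\partial_{\nu_\Sigma}c^\Sigma_i=0$ on $\partial\Sigma$; the chemical term $-\int_\Sigma (c^\Sigma_i)^- r^\text{ch}_{i,+}(c^\Sigma)$: on $\{(c^\Sigma_i)<0\}$, $r^\text{ch}_{i,+}(c^\Sigma) = r^\text{ch}_i((c^\Sigma)^+)$ with the $i$-th argument equal to $0$, so by quasi-positivity $(\text{A}^\text{ch}_\text{N})$ this is $\ge 0$, hence $-\int_\Sigma(c^\Sigma_i)^- r^\text{ch}_{i,+}(c^\Sigma)\le 0$; and the sorption term $-\int_\Sigma(c^\Sigma_i)^- r^\text{sorp}_i(c_i,c^\Sigma_i)$. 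I would now add the bulk identity (the $\Sigma$-coupling term $-\int_\Sigma c_i^- r^\text{sorp}_i(c_i,c^\Sigma_i)$) and the surface identity and bound the two sorption contributions together. Write $\phi(t):=\tfrac12\|c_i^-(t)\|_{L^2(\Omega)}^2 + \tfrac12\|(c^\Sigma_i)^-(t)\|_{L^2(\Sigma)}^2$. On $\{c_i<0\}$: $r^\text{sorp}_i(c_i,c^\Sigma_i)\le k^\text{ad}_i c_i\le 0$ when... no — $(\text{A}_\text{B}^\text{sorp})$ only gives bounds for $c_i,c^\Sigma_i\ge 0$. So instead I use the Lipschitz bound from $(\text{A}_\text{F}^\text{sorp})$: $|r^\text{sorp}_i(c_i,c^\Sigma_i) - r^\text{sorp}_i(c_i^+, (c^\Sigma_i)^+)|\le L(|c_i^-| + |(c^\Sigma_i)^-|)$, and on $\{c_i<0\}\cup\{c^\Sigma_i<0\}$ combined with $r^\text{sorp}_i(c_i^+,(c^\Sigma_i)^+)$ having the right sign via $(\text{A}_\text{B}^\text{sorp})$ and $(\text{A}_\text{M}^\text{sorp})$, one gets $-c_i^- r^\text{sorp}_i(c_i,c^\Sigma_i) - (c^\Sigma_i)^- r^\text{sorp}_i(c_i,c^\Sigma_i)\le C\big((c_i^-)^2 + ((c^\Sigma_i)^-)^2\big)$ pointwise. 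Summing over $i=1,\dots,N$ and integrating yields $\phi'(t)\le C\,\phi(t)$ with $\phi(0)=0$; Gronwall gives $\phi\equiv 0$ on $[0,T]$, i.e. $c_i^-=0$ a.e. in $\Omega_T$ and $(c^\Sigma_i)^-=0$ a.e. on $\Sigma_T$, which is the claim.

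**Main obstacle.** The delicate points are: (i) justifying the differentiation of $\|c_i^-(t)\|_{L^2}^2$ and the integrations by parts at the $L^p$-regularity available (resolved by the embedding $\E^\Omega_p(T)\hookrightarrow C([0,T];W^{2-2/p}_p(\Omega))$ and chain-rule lemmas for Nemytskij operators like $f\mapsto f^-$ on Sobolev spaces; see e.g.\ the standard references on such truncation arguments); (ii) the boundary term on $\Gamma_\text{out}$ from convection, which carries the "wrong" sign and must be absorbed using a trace interpolation inequality into the Dirichlet-energy term $-d_i\|\nabla c_i^-\|_{L^2(\Omega)}^2$; and (iii) handling the sorption coupling term without a priori nonnegativity of $c^\Sigma_i$, which forces the bulk and surface estimates to be combined and closed simultaneously via the Lipschitz continuity of $r^\text{sorp}_i$ rather than treated separately. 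None of these is deep, but (ii) and (iii) are where care is needed; everything else is bookkeeping with Gronwall.
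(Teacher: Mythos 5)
Your overall route---testing with the negative parts in $L^2$, adding the bulk and surface identities, and closing with Gronwall---is a legitimate alternative to the paper's argument (the paper instead works with smooth convex approximations $\phi_\eps$ of $r\mapsto r^-$, splits the combined sorption integral $\int_\Si(\phi_\eps'(\cs_i)-\phi_\eps'(c_i))r^\text{sorp}_i\,d\si$ by sign regions, and shows every limiting contribution is nonpositive, so no Gronwall is needed). However, as written your computation contains sign errors whose downstream effect is that your key pointwise estimate is false. Correctly, $\int_\Om(-c_i^-)(-d_i\Delta c_i)= +\,d_i\|\nabla c_i^-\|_{L^2(\Om)}^2 + d_i\int_{\pa\Om}c_i^-\pa_\nu c_i\,d\si$ (you wrote $-d_i\|\nabla c_i^-\|^2$), and the resulting identity is
\[
\tfrac{d}{dt}\tfrac12\|c_i^-\|_{L^2(\Om)}^2 \;=\; -\,d_i\|\nabla c_i^-\|_{L^2(\Om)}^2 \;+\;\int_{\pa\Om}c_i^-\,(-d_i\pa_\nu c_i)\,d\si \;-\;\tfrac12\int_{\pa\Om}(u\cdot\nu)(c_i^-)^2\,d\si .
\]
In particular the outflow contribution is $-\tfrac12\int_{\Ga_\text{out}}(u\cdot\nu)(c_i^-)^2\le 0$, i.e.\ it has the \emph{favorable} sign (you have this backwards; no absorption is needed there), and the sorption coupling enters the summed bulk--surface identity as $\int_\Si\bigl(c_i^--(\cs_i)^-\bigr)\,r^\text{sorp}_i(c_i,\cs_i)\,d\si$, not as $-\int_\Si\bigl(c_i^-+(\cs_i)^-\bigr)\,r^\text{sorp}_i\,d\si$. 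For your sign-flipped version the asserted pointwise bound $\le C\bigl((c_i^-)^2+((\cs_i)^-)^2\bigr)$ fails: already for Henry's law, on $\{c_i<0<\cs_i\}$ your integrand equals $k^\text{ad}_i(c_i^-)^2+k^\text{de}_i\,c_i^-\cs_i$, and the cross term $c_i^-\cs_i$ (with $\cs_i$ large and positive) is not controlled by $(c_i^-)^2+((\cs_i)^-)^2$.

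With the correct sign the step does go through, and in fact more easily, mirroring the paper's case distinction: on $\{c_i<0\le\cs_i\}$ one has $c_i^-\,r^\text{sorp}_i(c_i,\cs_i)\le c_i^-\,r^\text{sorp}_i(0,\cs_i)\le 0$ by $(\text{A}_\text{M}^\text{sorp})$ and $(\text{A}_\text{B}^\text{sorp})$; on $\{\cs_i<0\le c_i\}$ one has $-(\cs_i)^-\,r^\text{sorp}_i(c_i,\cs_i)\le -(\cs_i)^-\,r^\text{sorp}_i(c_i,0)\le 0$; only on $\{c_i<0,\ \cs_i<0\}$ do you need the global Lipschitz bound from $(\text{A}_\text{F}^\text{sorp})$ together with $r^\text{sorp}_i(0,0)=0$, giving $\le C\bigl((c_i^-)^2+((\cs_i)^-)^2\bigr)$ there. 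Note, moreover, that the term $\int_\Si(c_i^-)^2\,d\si$ arising from this last region is a boundary trace and is \emph{not} dominated by your $\phi(t)$; this is exactly where the interpolation $\|c_i^-\|_{L^2(\Si)}^2\le\varepsilon\|\nabla c_i^-\|_{L^2(\Om)}^2+C_\varepsilon\|c_i^-\|_{L^2(\Om)}^2$ and absorption into the retained dissipation are needed (not on $\Ga_\text{out}$) before Gronwall applies. Finally, your observation that one must take $f_i=0$ (or $f_i\ge0$) is correct and consistent with the paper's proof, which uses $\pa_t c_i=\operatorname{div}(d_i\nabla c_i-uc_i)$, i.e.\ the setting of (\ref{eq:cat}).
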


\begin{proof}
Let $\phi_\eps \in C^\infty(\R)$ be a pointwise approximation of
\begin{equation*}
\phi(r)=\left\{\ba{cr} -r & :r\leq 0 \\ 0 & :r>0 \ea\right.,
\end{equation*} as $\eps \rightarrow 0+$, which satisfies $\phi_\eps\geq 0$, $\phi_\eps'\leq 0$ and $\phi_\eps''\geq0$, e.g.
\begin{align*}
\phi_\eps(r):=\left\{
	\begin{array}{rl}
	-re^{\eps/r}&:r\leq 0\\
	0&:r>0
	\end{array}\right..
\end{align*}
Then we have for $c_i^-=\max\{0,-c_i\}$ that
\begin{align}\label{eq:properties_of_phi}
\phi_\eps (c_i) \rightarrow c_i^-, \qquad c_i \phi_\eps'(c_i) \rightarrow c_i
\left\{\ba{cr} -1&: c_i< 0\\ 0 &: c_i\geq 0 \ea\right\} =c_i^-
\end{align} as $\eps\rightarrow 0+$. We show
\begin{equation*}
\lim_{\eps\rightarrow 0}\left[\int_\Om \phi_\eps(c_i)dx+\int_\Si \phi_\eps(\cs_i) d\si\right]\leq 0.
\end{equation*} Applying $\phi_\eps$ to $c_i$ we obtain by partial integration
\begin{align}
\frac{d}{dt}\int_\Om \phi_\eps (c_i) dx &= \int_\Om \phi_\eps'(c_i) \pa_t c_i dx = \int_\Om \phi_\eps'(c_i) \div(d_i\nabla c_i -uc_i) dx\nonumber\\
&=\int_{\pa\Om} \phi_\eps'(c_i)(d_i \pa_\nu c_i -(u\cdot\nu)c_i) d\si - \int_\Om \phi_\eps''(c_i) \nabla c_i \cdot (d_i \nabla c_i - uc_i) dx\nonumber\\
&=-\int_{\Ga_\text{in}} \phi_\eps'(c_i) g^\text{in}_i d\si - \int_\Si \phi_\eps'(c_i) r^\text{sorp}_i d\si - \int_{\Ga_\text{out}} \phi_\eps'(c_i)(u\cdot\nu) c_i d\si\nonumber\\
&- d_i \int_\Om \phi_\eps''(c_i) |\nabla c_i|^2 dx + \int_\Om \phi_\eps''(c_i) \nabla c_i u c_i dx \label{eq:nonneg_bulk}
\end{align}due to the boundary conditions. In the same way we have
\begin{align}
\frac{d}{dt}\int_\Si \phi_\eps(\cs_i) d\si &= \int_\Si \phi_\eps'(\cs_i) \pa_t\cs_i d\si =\int_\Si \phi_\eps'(\cs_i)(\ds_i\Delta_\Si \cs_i +r^\text{sorp}_i(c_i,\cs_i)+r^\text{ch}_{i,+}(\cs))d\si\nonumber\\
&=-\ds_i\int_\Si \phi_\eps''(\cs_i) |\nabla_\Si\cs_i|^2 d\si+\int_\Si \phi_\eps'(\cs_i)(r^\text{sorp}_i(c_i,\cs_i)+r^\text{ch}_{i,+}(\cs))d\si.\label{eq:nonneg_surface}
\end{align} Let us go through all the integrals appearing on the right-hand side of (\ref{eq:nonneg_bulk}) and (\ref{eq:nonneg_surface}). The first and the fourth integrals on the right-hand side of (\ref{eq:nonneg_bulk}) and the first integral on the right-hand side of (\ref{eq:nonneg_surface}) are negative or zero such that we may drop them. The remaining four integrals are treated as follows: We combine the sorption boundary integrals to
\begin{equation*} \int_\Si (\phi_\eps'(\cs_i)-\phi_\eps'(c_i))r^\text{sorp}_i(c_i,\cs_i) d\si
\end{equation*} and split up this integral into three integrals on
\begin{equation*}
\{\text{sign}(c_i)=\text{sign}(\cs_i)\}, \qquad \{c_i\leq 0\leq \cs_i\},\qquad \{\cs_i\leq 0\leq c_i\}.
\end{equation*}
When $c_i$ and $\cs_i$ have the same sign this integral tends to $0$ as $\eps\rightarrow 0+$.
On $\{c_i\leq 0\leq \cs_i\}$ we have
\begin{align*} \int_\Si \mathds{1}_{\{c_i\leq 0\leq \cs_i\}}(\phi_\eps'(\cs_i)-\phi_\eps'(c_i))r^\text{sorp}_i(c_i,\cs_i) d\si\\ \rightarrow \int_\Si r^\text{sorp}_i(c_i,\cs_i) d\si \leq \int_\Si r^\text{sorp}_i(0,\cs_i) d\si \leq 0
\end{align*} as $\eps\rightarrow 0+$ by monotonicity of $r^\text{sorp}_i$ and $r^\text{sorp}_i (0,\cs_i)\leq 0$. In the same way on $\{\cs_i\leq 0\leq c_i\}$ we obtain
\begin{align*} \int_\Si\mathds{1}_{\{\cs_i\leq 0\leq c_i\}} (\phi_\eps'(\cs_i)-\phi_\eps'(c_i))r^\text{sorp}_i(c_i,\cs_i) d\si\\ \rightarrow -\int_\Si r^\text{sorp}_i(c_i,\cs_i) d\si \leq -\int_\Si r^\text{sorp}_i(c_i,0) d\si \leq 0
\end{align*} as $\eps\rightarrow 0+$ by monotonicity of $r^\text{sorp}_i$ and $r^\text{sorp}_i(c_i,0)\geq0$.
Because of $u\cdot \nu \geq 0$ on $\Ga_\text{out}$ we see that
\begin{align*}
-\int_{\Ga_\text{out}} \phi_\eps'(c_i) (u\cdot \nu) c_i d\si \rightarrow -\int_{\Ga_\text{out}} c_i^- (u\cdot\nu) d\si \leq 0.
\end{align*} We treat the reaction boundary integral by the quasi-positivity of $r^\text{ch}$ as follows.
We show
\begin{equation*}
\int_\Si \phi_\eps'(\cs_i) r^\text{ch}_{i,+}(\cs) d\si\leq 0
\end{equation*}
through
\begin{align*}
\int_\Si \phi_\eps'(\cs_i) r^\text{ch}_{i,+}(\cs) d\si
&=\int_\Si \phi_\eps'(\cs_i)\mathds{1}_{\{\cs_i>0\}} r^\text{ch}_{i,+}(\cs) d\si \\
&+\int_\Si \phi_\eps'(\cs_i) \mathds{1}_{\{\cs_i=0\}}r^\text{ch}_{i,+}(\cs) d\si
+ \int_\Si \phi_\eps'(\cs_i)\mathds{1}_{\{\cs_i<0\}} r^\text{ch}_{i,+}(\cs) d\si.
\end{align*} The first integral vanishes by the properties of $\phi_\epsilon'$, the second one is less than or equal to zero
by quasi-positivity and $\phi_\eps'(0)\leq 0$ as $\eps\rightarrow 0+$. The third one is less than or equal to zero by definition of the extension of $r^\text{ch}$ to $\R^N$, i.e.\ because of $\cs_i<0$ implies ${\cs_i}^+=0$ and $r^\text{ch}_i({\cs}^+)\geq 0$ by quasi-positivity.
We turn to the remaining integral $\int\nolimits_\Om \phi_\eps''(c_i) \nabla c_i u c_i dx$. Here we make use of $\nabla(\phi_\eps'(c_i))=\phi_\eps''(c_i)\nabla c_i$ and in the same manner of $\nabla(\phi_\eps(c_i))=\phi_\eps'(c_i)\nabla c_i$ and integrate by parts twice, such that
\begin{align*}
\int_\Om \phi_\eps''(c_i)\nabla c_i \cdot u c_i dx &=\int_\Om \nabla (\phi_\eps'(c_i))\cdot uc_i dx \\
&= \int_{\pa\Om} \phi_\eps' (c_i) (u\cdot \nu) c_i d\si - \int_\Om \phi_\eps'(c_i) \underbrace{\div(u c_i)}_{=u\cdot \nabla c_i} dx\\
&= \int_{\pa\Om} \phi_\eps' (c_i) (u\cdot \nu) c_i d\si - \int_\Om\nabla(\phi_\eps (c_i)) u dx\\
&= \int_{\pa\Om} \phi_\eps'(c_i)(u\cdot\nu) c_i d\si - \int_{\pa\Om} \phi_\eps(c_i)(u\cdot\nu) d\si,
\end{align*} where in the second and in the last step we made use of $\div u=0$. Employing
(\ref{eq:properties_of_phi}) we see
\begin{align*}
\int_\Om \phi_\eps''(c_i) \nabla c_i \cdot u c_i dx= \int_{\pa\Om} (\phi_\eps'(c_i)c_i - \phi_\eps(c_i))(u\cdot\nu) d\si \rightarrow 0
\end{align*} as $\eps\rightarrow 0+$. Therefore summing up (\ref{eq:nonneg_bulk}) and (\ref{eq:nonneg_surface}), integration in time over $[0,t]$ and taking the limit $\eps\rightarrow 0+$ yields
\begin{align*}
\int_\Om c_i^-(t) dx +\int_\Si {\cs_i}^-(t)d\si=\int_\Om \phi(c_i(t)) dx +\int_\Si \phi(\cs_i(t))d\si\\
\leq \int_\Om \phi (c_{0,i}) dx +\int_\Si \phi(\cs_{0,i})d\si= \int_\Om (c_{0,i})^- dx +\int_\Si (\cs_{0,i})^-d\si=0
\end{align*} which in turn gives $c_i^-=0$ a.e.\ in $\Om_T$, ${\cs_i}^-=0$ a.e.\ on $\Si_T$ and therefore $c_i\geq 0$ a.e.\ in $\Om_T$, $\cs_i\geq 0$ a.e.\ on $\Si_T$.
Note that for $\eps \rightarrow 0+$ we make use of Lebesgue's theorem on dominated convergence.
\end{proof}
\subsection{Existence of Solutions}
Let $T'>0$ be given and $T\leq T'$. Assume a set of (fixed) data
\begin{equation*} (f_i,0,g^\text{in}_i,0,0,0,c_{0,i},\cs_{0,i})\in\F^{\Om,\Si}_{p,I}(T')
\end{equation*}
to be given. We denote by
\begin{equation*} \cL_{T,i}:\E^\Om_p(T)\times \E^\Si_p(T)\rightarrow \F^{\Om,\Si}_{p}(T)
\end{equation*}
the isomorphism induced by Proposition~\ref{prop:maxRegPerturbedSystem},
that is, $\cL_{T,i}$ is the full linear operator on the right-hand side
of (\ref{eq:withPerturbation}) (except for the time traces).
The full nonlinear problem (\ref{eq:cat}) then is reformulated as
\begin{align}
	\cL_{T,i}(c_i,\cs_i)&=(f_i,0,g^\text{in}_i,0,0,0)
	+\cN_{T,i}(c,\cs),\label{reformnp}\\
	c_i(0)&=c_{0,i},\quad \cs_i(0)=\cs_{i,0},
	\quad i=1,\ldots,N,\nonumber
\end{align}
where $\cN_{T,i}$ includes the nonlinear sorption and reaction terms, i.e.,
\begin{align*}
\cN_{T,i}(c,\cs):=\big(0,r^\text{sorp}_i(c_i,\cs_i)+r^\text{ch}_{i,+}(\cs),0,r^\text{sorp}_i(c_i,\cs_i),0,0\big).
\end{align*}
In order keep the constants resulting from the estimates below independent of
$T$, we employ a suitable zero time trace splitting as described in
the following.

First we take care of the compatibility condition
arising from the nonlinear boundary condition on $\Si$. Taking time
trace results in $r^\text{sorp}_i(c_{0,i},\cs_{0,i})\in
W^{1-3/p}_p(\Si)$, which will be extended to $\G^\Si_p(T)$ by setting
\begin{equation*}
r^*_i:=\cR_\Sigma
e^{t\Delta_{\Si_{(-\infty,\infty)}}}\cE_{\Si_{(-\infty,\infty)}}
r^\text{sorp}_i(c_{0,i},\cs_{0,i}).
\end{equation*}
Here $\cE_{\Si_{(-\infty,\infty)}}$ denotes the extension operator
from the lateral surface $\Si$ to the surface of the infinite cylinder $\Si_{(-\infty,\infty)}$
and $\cR_\Sigma$ the corresponding restriction operator (note that both act
as bounded operators on the function classes considered here,
cf.\ \cite{Adams-Fournier:Sobolev-Spaces}).
Since $e^{t\Delta_{\Si_{(-\infty,\infty)}}}$ has the same regularizing
properties as the Laplacian on the whole space $\R^n$, for which the desired
regularity is well known \cite{Pruess-Saal-Simonett:Stefan-Analytic-Classical},
we see that $r^*_i\in \G^\Si_p(T)$.

Now we define the reference solution
$(c^*_i,{\cs_i}^*)\in\E^\Om_p(T')\times\E^\Si_p(T')$,
existing according to Proposition~\ref{prop:maxRegPerturbedSystem}, via
\begin{equation}\label{eq:OpLAndreferenceSolution}
\cL_{T,i}(c_i^*,{\cs_i}^*)=(f_i,0,g^\text{in}_i,r^*_i,0,0), \quad c_i(0)=c_{0,i} \quad \text{in}\; \Om,   \quad \cs_i(0)=\cs_{0,i} \quad \text{on}\;\Si.
\end{equation}
Decomposing $(c_i,\cs_i)$ as
\begin{equation*} c_i=\oc_i+c^\ast_i, \quad \cs_i=\csb_i+{\cs_i}^\ast
\end{equation*}
and subtracting (\ref{eq:OpLAndreferenceSolution}) from (\ref{reformnp}),
we end up with the reduced zero time trace problem
\begin{equation*}
{_0\cL}_{T,i}(\bar{c},\csb)={_0\cN}_{T,i}(\bar{c},\csb) \qquad (i=1,...,N).
\end{equation*}
Here ${_0\cL}_{T,i}$ denotes
the restriction of $\cL_{T,i}$ to
$\,_0\E^\Om_p(T)\times{_0\E^\Si_p(T)}$ and
\[
	{_0\cN}_{T,i}(\bar{c},\csb)
	:=\cN_{T,i}(\oc_i+c^\ast_i,\csb_i+{\cs_i}^\ast)
	-(0,0,0,r_i^\ast,0,0).
\]
Next, we define $\Phi_{T}:=(\Phi_{T,i})_{i=1,...,N}$ through
\begin{align*}
&{_0\Phi}_{T}:{\,_0\E^\Om_p(T)}^N\!\!\times{_0\E^\Si_p(T)}^N\rightarrow{\,_0\E^\Om_p(T)}^N\!\!\times{_0\E^\Si_p(T)}^N,\\
&{_0\Phi}_{T,i}(\bar{c},\csb):={_0\mathcal{S}}_{T,i}\,{_0\mathcal{N}}_{T,i}(\bar{c},\csb), \qquad (i=1,...,N),
\end{align*} with the bounded linear inverse ${_0\mathcal{S}}_{T,i}$ of ${_0\cL}_{T,i}$ given in Proposition \ref{prop:maxRegPerturbedSystem}.

{\it Proof of \Thmref{Local-WP}.} We apply the contraction mapping principle to ${_0\Phi}_T$, i.e., we show that there exists a
$\delta>0$, such that the mapping ${_0\Phi_T}$ constitutes a contraction on the closed ball $\bar{B}_\delta(0)\subset {\,_0\E^\Om_p(T)}^N\!\!\times{_0\E^\Si_p(T)}^N$ and fulfills ${_0\Phi}_T:\bar{B}_\delta(0)\rightarrow \bar{B}_\delta(0)$.

{\it (i) Contraction property:} Let $(\bar{c},\csb), (\bar{z},\zsb)\in
\bar{B}_\delta(0)$. Then we have
\begin{align}
&\Vert{_0\Phi}_T(\bar{c},\csb)-{_0\Phi}_T(\bar{z},\zsb)
\Vert_{{\,_0\E^\Om_p(T)}^N\!\!\times{_0\E^\Si_p(T)}^N}\nonumber\\
&\leq C \Vert{_0\mathcal{N}}_T(\bar{c},\csb)- {_0\mathcal{N}}_T(\bar{z},\zsb) \Vert_{{_0\F}^{\Om,\Si}_p(T)^N}\nonumber\\
&=C\Vert r^\text{sorp}(\bar{c}+c^*,\csb+{\cs}^*)-r^\text{sorp}(\bar{z}+c^*,\zsb+{\cs}^*) \Vert _{{(\F^\Si_p(T)\cap {_0\G}^\Si_p(T))}^N} \nonumber\\
&\qquad\strut+C\Vert r^\text{ch}(({\csb+{\cs}^*})^+)- r^\text{ch}(({\zsb+{\cs}^*})^+)\Vert_{\F^\Si_p(T)^N} \label{eq:ContractionEstimate}
\end{align}
with
\begin{align*}
C:=\sup \{\Vert {_0\mathcal{S}}_T\Vert_{\mathscr{L}({\,_0\F^{\Om,\Si}_{p}(T)^N},\,{\,_0\E^\Om_p(T)}^N\!\!\times{_0\E^\Si_p(T)}^N)}:\; T\in (0,T']\}
\end{align*} independent of $T$, cf.\! Proposition \ref{prop:maxRegPerturbedSystem}.
>From Remark \ref{rk:polynomial-growth} we infer that
\begin{align*}
\Vert r^\text{ch}(({\csb+{\cs}^*})^+)- r^\text{ch}(({\zsb+{\cs}^*})^+)\Vert_{L^p(\Si_T)^N}\leq L \Vert \csb-\zsb \Vert_{L^{p\gamma}(\Si_T)^N}
\end{align*} for a constant $L>0$ depending on $\delta$ but not $T$
and $\gamma$. Note in passing that we also used that $h\mapsto h^+$ is globally
Lipschitz continuous from $L^{p\gamma}(\Si_T)$ to $L^{p\gamma}(\Si_T)$ with
Lipschitz constant $1$. By the fact that $p>5/3$ we can estimate as
\begin{align*}
\Vert \csb-\zsb\Vert_{L^{p\gamma}(\Si_T)^N} \leq K T^\eta \Vert \csb-\zsb \Vert_{{_0\E}^\Si_p(T)^N}
\end{align*} with a constant $K>0$ and an exponent $\eta>0$ independent of $T$. We arrive at
\begin{align}\label{eq:ReactionEstimate}
\Vert r^\text{ch}(({\csb+{\cs}^*})^+)- r^\text{ch}(({\zsb+{\cs}^*})^+)\Vert_{\F^\Si_p(T)^N}\leq LK T^\eta \Vert \csb-\zsb\Vert_{_0\E^\Si_p(T)^N}.
\end{align} We turn to the estimate of the sorption rate. By
($\text{A}_\text{F}^{\text{sorp}}$), ($\text{A}_\text{B}^{\text{sorp}}$)
the mapping $r^\text{sorp}_i$ acts as a Nemytskij operator from
$W^s_p(\Si_T)\times W^s_p(\Si_T)$ to $W^s_p(\Si_T)$ for $s\in(0,1)$,
cf.\ Section~3.1 in \cite{Sickel96}.
Note that ($\text{A}_\text{B}^{\text{sorp}}$) implies $r^\text{sorp}_i(0,0)=0$.
An application of the mean value theorem to the function $r^\text{sorp}_i$ and using $\nabla r^\text{sorp}_i\in BC^1(\R^2,\R^2)$ yields the global Lipschitz continuity of its induced Nemytskij operator. Hence we may employ
\begin{align*}
W^s_p(\Si_T)=W^s_p((0,T),L^p(\Si))\cap L^p((0,T),W^s_p(\Si))
\end{align*} for $s\in(0,1)$ and 
\begin{align*}
{_0\bH^\Si_p}(T)\times {_0\E^\Si_p}(T)\hookrightarrow \big({_0W^{1-1/p+\epsilon}_p}((0,T), L^p(\Si))\cap L^p((0,T),W^{1-1/p+\epsilon}_p(\Si))\big)^2 \\
 \overset{r^\text{sorp}_i}{\longrightarrow} {_0W^{1-1/p+\epsilon}_p}((0,T), L^p(\Si))\cap L^p((0,T), W^{1-1/p+\epsilon}_p(\Si))
 \hookrightarrow {_0\G^\Si_p}(T),
\end{align*}
for sufficiently small $\epsilon>0$, such that we obtain, similarly as for the estimate of the reaction term,
\begin{align}
\Vert r^\text{sorp}_i(\bar{c}+c^*,\csb+{\cs}^*)-r^\text{sorp}_i(\bar{z}+c^*,\zsb+{\cs}^*) \Vert _{ {_0\G}^\Si_p(T)^N} \nonumber\\
\leq L'K'T^\eta \left(\Vert \bar{c}_i-\bar{z}_i\Vert_{_0\E^\Om_p(T)} +\Vert \csb_i-\zsb_i \Vert_{_0\E^\Si_p(T)} \right) \label{eq:SorptionEstimate}
\end{align} with constants $L',K'>0$ and an exponent $\eta>0$
independent of $T<T'$. Combining (\ref{eq:ReactionEstimate}) and
(\ref{eq:SorptionEstimate}) yields
\begin{align*}
&\Vert{_0\Phi}_T(\bar{c},\csb)-{_0\Phi}_T(\bar{z},\zsb)
\Vert_{{\,_0\E^\Om_p(T)}^N\!\!\times{_0\E^\Si_p(T)}^N}\\
&\leq C(LK+L'K')T^\eta\,\Vert (\bar{c},\csb)-(\bar{z},\zsb)\Vert_{{\,_0\E^\Om_p(T)}^N\!\!\times{_0\E^\Si_p(T)}^N}
\end{align*}
for $(\bar{c},\csb), (\bar{z},\zsb)\in \bar{B}_\delta(0)$.
We choose $T$ so small that
\begin{align}\label{eq:ChoosingT}
C(LK+L'K')T^\eta\leq \frac{1}{2},
\end{align} which is possible since all other constants appearing in (\ref{eq:ChoosingT}) are independent of $T<T'$.
Hence ${_0\Phi}_T$ is a contraction on $\bar{B}_\delta(0)$.

\vspace{0,25cm}
{\it (ii) Self mapping property:} Let $(\bar{c},\csb)\in \bar{B}_\delta(0)$.
Then we have
\begin{align*}
&\Vert {_0\Phi}_T(\bar{c},\csb)\Vert_{{\,_0\E^\Om_p(T)}^N\!\!\times{_0\E^\Si_p(T)}^N} \leq C\Vert {_0\mathcal{N}}_T(\bar{c}+c^*,\csb+{\cs}^*)\Vert_{_0\F^{\Om,\Si}_p(T)^N} \\
&\leq C\Vert r^\text{sorp}(\bar{c}+c^*,\csb+{\cs}^*)-r^* \Vert_{_0\G^\Si_p(T)^N}
+C\Vert r^\text{ch}({(\csb+{\cs}^*)}^+) \Vert_{\F^\Si_p(T)^N}.
\end{align*}
Analogously to (i) we estimate the reaction term by
\begin{align}
&\Vert r^\text{ch}(({\csb+{\cs}^*})^+)\Vert_{L^p(\Si_T)^N}\nonumber\\
&\leq \Vert r^\text{ch}(({\csb+{\cs}^*})^+)-r^\text{ch}({({\cs}^*)}^+)\Vert_{L^p(\Si_T)^N}+\Vert r^\text{ch}(({\cs}^*)^+)\Vert_{L^p(\Si_T)^N}\nonumber\\
&\leq L \Vert \csb\Vert_{L^{p\gamma}(\Si_T)^N}+\Vert r^\text{ch}(({\cs}^*)^+)\Vert_{L^p(\Si_T)^N}\nonumber\\
&\leq L K T^\eta \Vert \csb\Vert_{{_0\E^\Si_p(T)}^N}+\Vert r^\text{ch}(({\cs}^*)^+)\Vert_{L^p(\Si_T)^N} \label{eq:SelfmappingReaction}
\end{align} with constants $L,K>0$ being independent of
$T$. In the same manner as in
(i) for the sorption term we obtain
\begin{align}
&\Vert r^\text{sorp}_i(\bar{c}_i+c_i^*,\csb_i+{\cs_i}^*)-r^*_i\Vert_{_0\G^\Si_p(T)}
\nonumber\\
&\leq \Vert
r^\text{sorp}_i(\bar{c}_i+c_i^*,\csb_i+{\cs_i}^*)-r^\text{sorp}_i(c_i^*,{\cs_i}^*)\Vert_{_0\G^\Si_p(T)}\nonumber\\
&\qquad\strut +\Vert r^\text{sorp}_i(c_i^*,{\cs_i}^*)\Vert_{\G^\Si_p(T)} +\Vert r_i^*\Vert_{_0\G^\Si_p(T)}\nonumber\\
& \leq L'K'T^\eta\Vert (\bar{c}_i,\csb_i)\Vert_{{\,_0\E^\Om_p(T)}^N\!\!\times{_0\E^\Si_p(T)}^N}
 +\Vert r^\text{sorp}_i(c_i^*,{\cs_i}^*)\Vert_{\G^\Si_p(T)} +\Vert r_i^*\Vert_{_0\G^\Si_p(T)} \label{eq:SelfmappingSorption}
\end{align}
with constants $L',K'>0$ and an exponent $\eta>0$ being all independent of $T$. Putting together (\ref{eq:SelfmappingReaction}) and (\ref{eq:SelfmappingSorption}) yields
\begin{align*}
\Vert
{_0\Phi}_T(\bar{c},\csb)\Vert_{{\,_0\E^\Om_p(T)}^N\!\!\times{_0\E^\Si_p(T)}^N}
&\leq C(LK+L'K')T^\eta \delta +\Vert r^\text{ch}(({\cs}^*)^+)\Vert_{L^p(\Si_T)^N}\\
&\qquad\strut+\Vert r^\text{sorp}_i(c_i^*,{\cs_i}^*)\Vert_{\G^\Si_p(T)} +\Vert r_i^*\Vert_{_0\G^\Si_p(T)}.
\end{align*}
Since $c_i^*$, ${\cs_i}^*$, and $r_i^*$ are fixed functions,
notice that the latter three terms can be made small by choosing
$T>0$ small. Thus, by choosing $T$ so small that the sum of those
three terms is less than $\delta/2$ and such that (\ref{eq:ChoosingT})
is satisfied, we arrive at
\begin{align*}
\Vert {_0\Phi}_T(\bar{c},\csb)\Vert_{{\,_0\E^\Om_p(T)}^N\!\!\times{_0\E^\Si_p(T)}^N} \leq\delta \qquad
((\bar{c},\csb) \in \bar{B}_\delta(0)).
\end{align*}
The proof of \Thmref{Local-WP} is now complete.
\hfill $\square$

\section{Global Well-Posedness}
\seclabel{Global-WP}
In this section we show that the local-in-time strong solutions obtained in \Thmref{Local-WP}
in fact exist globally, provided the reaction rates satisfy some structural condition that allows for
the derivation of a priori estimates.
We suppose in addition to ($\textrm{A}^{\textrm{ch}}_{\textrm{F}}$), ($\textrm{A}^{\textrm{ch}}_{\textrm{N}}$) and ($\textrm{A}^{\textrm{ch}}_{\textrm{F}}$)
the following assumption on the structure of the reaction rates.

		\vspace*{0.5em}
		{\bfseries ($\textrm{A}^{\textrm{ch}}_{\textrm{S}}$)}
		There exists a lower triangular matrix $Q=(q_{ij})_{1\leq i,j\leq N} \in \bR^{N \times N}$
		with strictly positive diagonal entries such that
		\begin{equation}\label{eq:A^ch_S-structure-of-reaction}
			Q r^{\textrm{ch}}(y) \leq C \left( 1 + \sum^N_{j = 1} y_j \right) v, \qquad y \in [0,\infty)^N
		\end{equation}
		for some constant $C > 0$ and $v = (1,\,\dots,\,1)$.
		\vspace*{0.5em}

\smallskip
Reaction-diffusion systems with this triangular condition have been
widely studied by several authors; see \cite{Pie10} and the references
cited therein. When proving global existence results, condition (\ref{eq:A^ch_S-structure-of-reaction}) allows for an iteration scheme which has been applied successfully in many situations. A major objective of this section is to generalize this iteration scheme for standard reaction-diffusion systems subject to ($\textrm{A}^{\textrm{ch}}_{\textrm{S}}$) to heterogeneous catalysis. The main difference compared to standard systems lies in the fact that the reaction takes place on the boundary instead inside the bulk and that we also have to deal with terms arising from sorption processes.

In this setting, we obtain the global-in-time well-posedness result, given by \Thmref{Global-WP}.
The proof of \Thmref{Global-WP} requires, besides the maximal regularity estimates obtained in \Secref{linear_equations},
also some comparison principles and some weak-type estimates, which are provided by the following results.
\begin{lemma}\lemlabel{Linear-Positivity}
	Let $T > 0$ and let $1 < p < \infty$.
	Let $\alpha,\,\beta > 0$. \vspace*{-0.5\baselineskip}
	\begin{enumerate}[a)]
		\item Assume $f \in \bF^\Omega_p(T)$, $g^{\textrm{in}} \in \bG^{\textrm{in}}_p(T)$, $\gs \in \bG^\Sigma_p(T)$
			and $v_0 \in \bI^\Omega_p$ with
			\begin{equation*}
				(u \cdot \nu) v_0 - \beta \partial_\nu v_0 = g^{\textrm{in}}(0) \quad \mbox{on}\ \Gamma_{\textrm{in}}, \qquad
				- \beta \partial_\nu v_0 = \gs(0) \quad \mbox{on}\ \Sigma,
			\end{equation*}
			if $p > 3$.
			Let $v \in \bE^\Omega_p(T)$ be a strong solution to
			\begin{equation*}
				\left\{\begin{array}{rclll}
					\alpha \partial_t v + (u \cdot \nabla) v - \beta \Delta v & = & f               & \quad \mbox{in} & (0,\,T) \times \Omega,                \\[0.5em]
				                     (u \cdot \nu) v - \beta \partial_\nu v & = & g^{\textrm{in}} & \quad \mbox{on} & (0,\,T) \times \Gamma_{\textrm{in}},  \\[0.5em]
				                                     - \beta \partial_\nu v & = & \gs        & \quad \mbox{on} & (0,\,T) \times \Sigma,                \\[0.5em]
				                                     - \beta \partial_\nu v & = & 0               & \quad \mbox{on} & (0,\,T) \times \Gamma_{\textrm{out}}, \\[0.5em]
					                                                     v(0) & = & v_0             & \quad \mbox{in} & \Omega.
				\end{array}\right.
			\end{equation*}
			If $f,\,v_0 \geq 0$ and $g^{\textrm{in}},\,\gs \leq 0$, then $v \geq 0$. \vspace*{0.5\baselineskip}
		\item Assume $f \in \bF^\Sigma_p(T)$ and $v_0 \in \bI^\Sigma_p$ with $- \beta \partial_{\nu_\Sigma} v_0 = 0$ on $\partial \Sigma$, if $p > 3$.
			Let $v \in \bE^\Sigma_p(T)$ be a strong solution to
			\begin{equation*}
				\left\{\begin{array}{rclll}
					\alpha \partial_t v - \beta \Delta_\Si v & = & f   & \quad \mbox{on} & (0,\,T) \times \Sigma,          \\[0.5em]
					     - \beta \partial_{\nu_\Sigma} v & = & 0   & \quad \mbox{on} & (0,\,T) \times \partial \Sigma, \\[0.5em]
			                                    v(0) & = & v_0 & \quad \mbox{on} & \Sigma.
				\end{array}\right.
			\end{equation*}
			If $f,\,v_0 \geq 0$, then $v \geq 0$.
	\end{enumerate}
\end{lemma}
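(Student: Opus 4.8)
The plan is to repeat, in the decoupled linear setting, the argument used for \Lemref{Nonneg} — the present statement being exactly that lemma with the nonlinear sorption and reaction contributions on $\Si$ replaced by the prescribed data $\gs$ and $f$ — namely to test each equation with a smooth approximation of the negative part and exploit the sign conditions together with $\div u=0$ and the sign conditions on $u\cdot\nu$ from ($\text{A}^\text{vel}$). Fix $\phi_\eps\in C^\infty(\R)$ approximating $r\mapsto r^-=\max\{0,-r\}$ as $\eps\to0+$ with $\phi_\eps\geq0$, $\phi_\eps'\leq0$, $\phi_\eps''\geq0$, and $\phi_\eps(r)\to r^-$, $\phi_\eps'(r)\,r\to r^-$ pointwise, e.g.\ the choice $\phi_\eps(r)=-re^{\eps/r}$ for $r\leq0$ and $\phi_\eps(r)=0$ for $r>0$ from the proof of \Lemref{Nonneg}, for which in addition $0\leq\phi_\eps'(r)\,r-\phi_\eps(r)\leq\eps$.

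\textbf{Part (a).} I would test the bulk equation with $\phi_\eps'(v)$ and integrate by parts, using $(u\cdot\nabla)v=\div(uv)$ (valid since $\div u=0$), to arrive at
\begin{align*}
\alpha\frac{d}{dt}\int_\Om\phi_\eps(v)\,dx
&=\int_\Om\phi_\eps'(v)\,f\,dx
+\int_{\pa\Om}\phi_\eps'(v)\bigl(\beta\pa_\nu v-(u\cdot\nu)v\bigr)\,d\si\\
&\quad-\beta\int_\Om\phi_\eps''(v)|\nabla v|^2\,dx
+\int_\Om\phi_\eps''(v)(u\cdot\nabla v)\,v\,dx .
\end{align*}
The first term is $\leq0$ by $\phi_\eps'\leq0$, $f\geq0$, and the third is $\leq0$ by $\phi_\eps''\geq0$, $\beta>0$. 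For the last term I would integrate by parts twice, using $\phi_\eps''(v)\nabla v=\nabla\phi_\eps'(v)$, $\phi_\eps'(v)\nabla v=\nabla\phi_\eps(v)$ and again $\div u=0$, rewriting it as $\int_{\pa\Om}\bigl(\phi_\eps'(v)\,v-\phi_\eps(v)\bigr)(u\cdot\nu)\,d\si$; its $\phi_\eps'(v)\,v\,(u\cdot\nu)$ part cancels the $-(u\cdot\nu)v$ part of the flux boundary integral, so that the total boundary contribution is $\int_{\pa\Om}\phi_\eps'(v)\,\beta\pa_\nu v\,d\si-\int_{\pa\Om}\phi_\eps(v)(u\cdot\nu)\,d\si$. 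Splitting over $\pa\Om=\Ga_\text{in}\cup\overline{\Si}\cup\Ga_\text{out}$: on $\Si$ one has $u\cdot\nu=0$ and $\beta\pa_\nu v=-\gs\geq0$, so that part is $\leq0$; on $\Ga_\text{in}$ the flux part contributes $-\int_{\Ga_\text{in}}\phi_\eps'(v)g^\text{in}\,d\si\leq0$ together with $\int_{\Ga_\text{in}}\bigl(\phi_\eps'(v)\,v-\phi_\eps(v)\bigr)(u\cdot\nu)\,d\si$, which is $O(\eps)$ (indeed already $\leq0$ here, since $u\cdot\nu\leq0$); on $\Ga_\text{out}$ one has $\pa_\nu v=0$, leaving $-\int_{\Ga_\text{out}}\phi_\eps(v)(u\cdot\nu)\,d\si\leq0$ since $u\cdot\nu\geq0$. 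Hence $\alpha\frac{d}{dt}\int_\Om\phi_\eps(v)\,dx\leq O(\eps)$; integrating over $[0,t]$, letting $\eps\to0+$ by dominated convergence and using $v_0\geq0$ gives $\alpha\int_\Om v(t)^-\,dx\leq\alpha\int_\Om v_0^-\,dx=0$, so $v\geq0$ a.e.\ on $\Om_T$.

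\textbf{Part (b).} This is the same computation without advection: testing the surface equation with $\phi_\eps'(v)$ yields
\begin{equation*}
\alpha\frac{d}{dt}\int_\Si\phi_\eps(v)\,d\si
=\int_\Si\phi_\eps'(v)\,f\,d\si
-\beta\int_\Si\phi_\eps''(v)|\nabla_\Si v|^2\,d\si
+\beta\int_{\pa\Si}\phi_\eps'(v)\,\pa_{\nu_\Si}v\,d\si ,
\end{equation*}
where the last integral vanishes by the homogeneous Neumann condition on $\pa\Si$ and the first two terms are $\leq0$. Integrating over $[0,t]$ and letting $\eps\to0+$ gives $\int_\Si v(t)^-\,d\si\leq\int_\Si v_0^-\,d\si=0$, hence $v\geq0$ a.e.\ on $\Si_T$.

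\textbf{Main obstacle.} There is no conceptual difficulty; the only point needing care is the bookkeeping of the convective boundary terms on $\Ga_\text{in}$ and $\Ga_\text{out}$ — verifying that the boundary term produced by integrating $\int_\Om\phi_\eps''(v)(u\cdot\nabla v)v\,dx$ by parts combines with the outflow part of the flux integral into the manifestly nonpositive $-\int_{\Ga_\text{out}}\phi_\eps(v)(u\cdot\nu)\,d\si$, while the inflow remainder is $O(\eps)$. This is precisely the calculation already carried out for the fully coupled nonlinear system in \Lemref{Nonneg}, so it can simply be quoted. Should one worry about the regularity needed to make the pointwise manipulations rigorous for small $p$, one runs the argument first for $p\geq2$, where $\bE^\Om_p(T)\hookrightarrow C([0,T],W^{2-2/p}_p(\Om))$ and $\nabla v\in L^2$, and extends to general $p$ by density; for the application in \Thmref{Global-WP} only $p=2$ is relevant anyway.
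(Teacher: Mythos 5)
Your proposal is correct and coincides with the paper's own argument: the paper simply states that the proof "follows the lines of the proof of Lemma \ref{lem:Nonneg}", i.e.\ exactly the $\phi_\eps$-approximation of the negative part, integration by parts using $\div u=0$ and the sign conditions on $u\cdot\nu$, $f$, $v_0$, $g^{\text{in}}$, $\gs$ that you carry out. Your bookkeeping of the convective boundary terms (the cancellation leaving $\int_{\pa\Om}\phi_\eps'(v)\beta\pa_\nu v\,d\si-\int_{\pa\Om}\phi_\eps(v)(u\cdot\nu)\,d\si$) matches the computation in the paper's Lemma \ref{lem:Nonneg}, so nothing further is needed.
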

\begin{proof}
	The proof follows the lines of the proof of \Lemref{Nonneg},
	except that here we deal with a linear problem, only.
\end{proof}
\begin{lemma}\lemlabel{Weak-Estimates-Domain}
	Let $T^\ast > 0$ and let $2 \leq p < \infty$ and $2 \leq q \leq \infty$.
	Let $\alpha,\,\beta > 0$.
	Assume $f \in \bF^\Omega_p(T)$, $g^{\textrm{in}} \in \bG^{\textrm{in}}_p(T)$, $\gs \in \bG^\Sigma_p(T)$
	and $v_0 \in \bI^\Omega_p \cap BC(\Omega)$ with
	\begin{equation*}
		(u \cdot \nu) v_0 - \beta \partial_\nu v_0 = g^{\textrm{in}}(0) \quad \mbox{on}\ \Gamma_{\textrm{in}}, \qquad
		- \beta \partial_\nu v_0 = \gs(0) \quad \mbox{on}\ \Sigma,
	\end{equation*}
	if $p > 3$.
	Let $v \in \bE^\Omega_p(T)$ be a strong solution to
	\begin{equation*}
		\left\{\begin{array}{rclll}
			\alpha \partial_t v + (u \cdot \nabla) v - \beta \Delta v & = & f               & \quad \mbox{in} & (0,\,T) \times \Omega,                \\[0.5em]
		                     (u \cdot \nu) v - \beta \partial_\nu v & = & g^{\textrm{in}} & \quad \mbox{on} & (0,\,T) \times \Gamma_{\textrm{in}},  \\[0.5em]
		                                     - \beta \partial_\nu v & = & \gs        & \quad \mbox{on} & (0,\,T) \times \Sigma,                \\[0.5em]
		                                     - \beta \partial_\nu v & = & 0               & \quad \mbox{on} & (0,\,T) \times \Gamma_{\textrm{out}}, \\[0.5em]
			                                                     v(0) & = & v_0             & \quad \mbox{in} & \Omega.
		\end{array}\right.
	\end{equation*}
	Then
	\begin{equation*}
		\|v\|_{L^q(\Omega_T)} + \|v\|_{L^q(\Sigma_T)} \leq C \Big( \|f\|_{L^q(\Omega_T)} + \|g^{\textrm{in}}\|_{L^q(\Gamma_{\textrm{in},T})} + \|\gs\|_{L^q(\Sigma_T)} + \|v_0\|_{L^q(\Omega)} \Big)
	\end{equation*}
	for some constant $C> 0$ that is independent of $p$ and $0 < T < T^\ast$.
\end{lemma}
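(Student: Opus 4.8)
The proof is a variant of the classical $L^q$-testing argument for reaction--diffusion systems (see \cite{Pie10} and the references therein), adapted to the mixed boundary conditions at hand, with the surface norm extracted from the parabolic dissipation by a trace inequality. All constants below are allowed to depend on $q,\alpha,\beta,\Omega$ and $T^\ast$, but not on $p$ (only $L^q$-quantities enter, so the regularity parameter plays no role beyond making the test-function computation legitimate) nor on $T\in(0,T^\ast)$.

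Assume first $q<\infty$, and --- after a routine truncation, i.e.\ replacing $s\mapsto|s|^{q-2}s$ by a bounded $C^1$-approximation and passing to the limit at the end, which is needed because a strong $L^p$-solution with $p=2$ need not lie in $L^\infty(\Omega_T)$ --- test the bulk equation with $|v|^{q-2}v$ and integrate over $\Omega$. The time term yields $\frac{\alpha}{q}\frac{d}{dt}\|v\|_{L^q(\Omega)}^q$; integrating the Laplacian by parts produces the dissipation $\frac{4\beta(q-1)}{q^2}\|\nabla w\|_{L^2(\Omega)}^2$ with $w:=|v|^{q/2-1}v$, so that $|w|^2=|v|^q$, plus the boundary term $-\beta\int_{\partial\Omega}|v|^{q-2}v\,\partial_\nu v$; and the advection term, using $\div u=0$, collapses to $\frac1q\int_{\partial\Omega}(u\cdot\nu)|v|^q$. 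Inserting the three flux conditions, the combined boundary contribution equals $-\frac{q-1}{q}(u\cdot\nu)|v|^q+|v|^{q-2}v\,g^{\textrm{in}}$ on $\Gamma_{\textrm{in}}$, equals $\frac1q(u\cdot\nu)|v|^q$ on $\Gamma_{\textrm{out}}$, and equals $|v|^{q-2}v\,\gs$ on $\Sigma$ (where $u\cdot\nu=0$). Since $u\cdot\nu\le0$ on $\Gamma_{\textrm{in}}$ and $u\cdot\nu\ge0$ on $\Gamma_{\textrm{out}}$, the advective boundary terms are favourably signed and may be discarded, which leaves
\[
\frac{\alpha}{q}\frac{d}{dt}\|v\|_{L^q(\Omega)}^q+\frac{4\beta(q-1)}{q^2}\|\nabla w\|_{L^2(\Omega)}^2\le\int_\Omega|v|^{q-2}v\,f-\int_{\Gamma_{\textrm{in}}}|v|^{q-2}v\,g^{\textrm{in}}-\int_\Sigma|v|^{q-2}v\,\gs .
\]

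Next, bound the right-hand side by H{\"o}lder's inequality through $\|v\|_{L^q(\Omega)}^{q-1}\|f\|_{L^q(\Omega)}+\|v\|_{L^q(\Gamma_{\textrm{in}})}^{q-1}\|g^{\textrm{in}}\|_{L^q(\Gamma_{\textrm{in}})}+\|v\|_{L^q(\Sigma)}^{q-1}\|\gs\|_{L^q(\Sigma)}$. To the first term apply Young's inequality directly; to the two boundary terms apply Young's inequality and then the trace interpolation $\|w\|_{L^2(\partial\Omega)}^2\le\eta\|\nabla w\|_{L^2(\Omega)}^2+C_\eta\|w\|_{L^2(\Omega)}^2$, valid on the $C^2$-domain $\Omega$, keeping in mind $\|v\|_{L^q(\partial\Omega)}^q=\|w\|_{L^2(\partial\Omega)}^2$ and $\|w\|_{L^2(\Omega)}^2=\|v\|_{L^q(\Omega)}^q$. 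Choosing $\eta$ small (in terms of $q,\beta$), the resulting $\|\nabla w\|^2$-terms are absorbed on the left, giving a differential inequality of the form $\frac{\alpha}{q}y'(t)+\frac{2\beta(q-1)}{q^2}\|\nabla w(t)\|_{L^2(\Omega)}^2\le Cy(t)+Ch(t)$ with $y:=\|v\|_{L^q(\Omega)}^q$ and $h:=\|f\|_{L^q(\Omega)}^q+\|g^{\textrm{in}}\|_{L^q(\Gamma_{\textrm{in}})}^q+\|\gs\|_{L^q(\Sigma)}^q$. Gr{\"o}nwall's inequality together with $T<T^\ast$ then bounds $\sup_{t<T}y(t)$, hence $\|v\|_{L^q(\Omega_T)}\le(T^\ast)^{1/q}\bigl(\sup_{t<T}y(t)\bigr)^{1/q}$, by the asserted combination of data norms (here $v_0\in BC(\Omega)\hookrightarrow L^q(\Omega)$ is used, and, when $q>p$, one may assume the right-hand data to lie in the relevant $L^q$-spaces, the estimate being void otherwise); integrating the differential inequality once more in $t$ bounds $\int_0^T\|\nabla w\|_{L^2(\Omega)}^2\,dt$ by the same right-hand side, and the trace interpolation applied pointwise in $t$ and integrated then controls $\|v\|_{L^q(\Sigma_T)}=\bigl(\int_0^T\|w\|_{L^2(\Sigma)}^2\,dt\bigr)^{1/q}$. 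Finally, the endpoint $q=\infty$ is handled separately: either by comparing $|v|$ from above with an explicit supersolution and invoking the maximum principle of \Lemref{Linear-Positivity}, or by a De Giorgi--Moser type $L^\infty$-bound for parabolic equations combined with the case $q=2$ just established.

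The only genuinely non-routine point is the recovery of the surface norm $\|v\|_{L^q(\Sigma_T)}$: it is not visible in the boundary-free differential inequality for $\|v(t)\|_{L^q(\Omega)}$ alone and must be produced from the parabolic dissipation $\|\nabla(|v|^{q/2-1}v)\|_{L^2(\Omega)}^2$ via the quadratic trace inequality; the delicate part is to order the Young and absorption steps so that this one dissipation term simultaneously absorbs the two boundary-flux contributions \emph{and} still leaves a fixed positive multiple to feed the trace estimate. Keeping every constant independent of $T$ (using only $0<T<T^\ast$) and of $p$ is then mere bookkeeping, and $q=\infty$ is the single place where the $L^q$-testing scheme must be supplemented by a different tool.
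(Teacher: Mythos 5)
For $2\le q<\infty$ your argument is correct and genuinely different from the paper's. You prove the estimate directly by testing with $|v|^{q-2}v$ (after a truncation that makes the test admissible for a strong $L^2$-solution), using $\div u=0$ together with the sign conditions on $u\cdot\nu$ to discard the advective boundary terms, and recovering the surface norm from the dissipation via the trace interpolation and Gr\"onwall; your constants visibly do not involve $p$ and depend on $T$ only through $T^\ast$. The paper instead treats only the endpoints: $q=2$ by the same elementary testing, $q=\infty$ by a comparison argument based on \Lemref{Linear-Positivity}, then intermediate $q$ by interpolating the solution operator between $L^2$ and $L^\infty$, with an extra data-approximation step for $2\le p\le 5/2$ (where $\E^\Om_p(T)\not\hookrightarrow BC(\overline{\Om_T})$). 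Your route buys a self-contained proof for all finite $q$ without interpolation or approximation; the paper's route buys the endpoint $q=\infty$ cheaply.

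The genuine gap is precisely that endpoint $q=\infty$, which the lemma asserts and which is the case actually used later (the global proof invokes this estimate with $q=\infty$ in Steps 4.2--5.2). It does not follow from your finite-$q$ bound by letting $q\to\infty$: the absorption forces $\eta\sim\beta/q$, so $C_\eta$ and hence the Gr\"onwall rate blow up with $q$. Of the two alternatives you name, neither is carried out, and the first one fails as stated: an \emph{explicit} supersolution must dominate the inhomogeneous Neumann flux, i.e.\ satisfy $-\beta\pa_\nu\bar v\le-\Vert \gs\Vert_{L^\infty(\Si_T)}$ on $\Si$ (and the analogous condition on $\Ga_\text{in}$), so it cannot be constant in space; but then the term $u\cdot\nabla\bar v$ is only $L^p$ in time under ($\text{A}^\text{vel}$), and the pointwise supersolution inequality cannot be verified for any explicit barrier. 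This is exactly why the paper constructs the comparison function non-explicitly, as the maximal-regularity solution $\phi\in\E^\Om_r(T)$, $5/2<r<3$, of the same linear problem (advection included) with data $(1,-1,-1,0,1)$, so that $\phi$ is bounded and continuous, and then applies \Lemref{Linear-Positivity} to $\delta\phi-v$; the case $2\le p\le5/2$ additionally needs the approximation step to make sense of the bound on $\Si_T$. Your second alternative (a De Giorgi--Moser iteration accommodating the flux boundary conditions, the advection with merely $L^p$ time regularity, and the $L^\infty$ bound on $\Si_T$) would also work in principle, but it is a substantial argument, not a remark. To complete the proof you must either reproduce the paper's comparison construction for $q=\infty$ or actually perform such an iteration.
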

\begin{proof} If the right-hand side is infinite, nothing has to be proved. So, we now assume that the data admits finite $L^q$-norm. We distinguish two cases:\\
{\it Case 1:} $p>5/2$, such that $\E^\Om_p(T)\hookrightarrow BC(\overline{\Om_T})$, cf.\ \cite{Amann-Anisotropic-Function-Spaces}. \\
{\it $q=2$:} standard (multiply the equation by $v$ to obtain
$L^2$-estimates).\\
{\it $q=\infty$:} Let $\cL_T$ denote the operator given by the left-hand side of the system under consideration, such that $\cL_T v=(f,g^\text{in},\gs,0,v_0)$. For fixed $5/2<r<3$ let $\phi\in \E^\Om_r(T)$ denote the solution of
\begin{align*}
\cL_T\phi =(1,-1,-1,0,1)=:F.
\end{align*} Note that $\E^\Om_r(T)\hookrightarrow BC(\overline{\Om_T})$ and that no compatibility conditions for $F$ occur. Let
\begin{align*}
\delta:=\Vert f\Vert_{L^\infty(\Om_T)}+\Vert g^\text{in}\Vert_{L^\infty(\Ga_{\text{in},T})}+\Vert \gs\Vert_{L^\infty(\Si_T)}+\Vert v_0\Vert_{L^\infty(\Om)}
\end{align*} and let $\bar{v}\in \E^\Om_p(T)$ be given through $\cL_T\bar{v}=\delta F$. Then the comparison principle (\Lemref{Linear-Positivity} a)) applied to $\bar{v}-v$ yields $v\leq \bar{v}$ on $\overline{\Om_T}$.
Since $\bar{v}=\delta \phi$ and $\Vert\bar{v}\Vert_{BC}=\delta \Vert \phi \Vert_{BC}$ it follows that
\begin{align*}
v\leq \delta \left(\Vert \phi \Vert_{L^\infty(\Om_T)}+\Vert \phi \Vert_{L^\infty(\Si_T)}\right)=:\delta C
\end{align*} on $\overline{\Om_T}$. Analogously $-\delta C\leq v$, such that $\Vert v \Vert_{L^\infty(\Om_T)}\leq C\delta$.
Since in this case $v\in BC(\overline{\Om_T})$, we trivially have
\begin{align*}
\Vert v\Vert_{L^\infty(\Si_T)}\leq \Vert v \Vert_{L^\infty(\Om_T)}\leq C\delta,
\end{align*} hence that
\begin{align*}
\Vert v\Vert_{L^\infty(\Si_T)}+\Vert v \Vert_{L^\infty(\Om_T)}\leq C\delta.
\end{align*}
For $2\leq q\leq\infty$ we set
\begin{align*}
X_q:=L^q(\Om_T)\times L^q(\Si_T), \quad Y_q:=L^q(\Om_T)\times L^q(\Ga_{\text{in},T})\times L^q(\Si_T)\times \{0\}\times L^q(\Om).
\end{align*} Let $\cS_T$ denote the system's solution operator. By the $L^2$- and the $L^\infty$-estimates obtained above we have
\begin{align*}
\cS_T \in \mathscr{L}(Y_2,X_2)\cap \mathscr{L}(Y_\infty,X_\infty).
\end{align*} By interpolation $\cS_T\in \mathscr{L}(Y_q,X_q)$ which yields the assertion for Case 1.

\medskip
{\it Case 2:} $2\leq p\leq 5/2$.\\
For convenience set $F:=(f,0,g^\text{in},\gs,0,0,v_0,0)\in \F^{\Om,\Si}_{p,I}(T)$. Let $r>5/2$ and choose
\begin{align*}
F_n:=&(f_n,0,g^\text{in}_n,\gs_n,0,0,v_{0,n},0) \in \mathbb{Y}_{r,q}:= \\
\F^{\Om,\Si}_{r,I}(T)\cap
(L^q(\Om_T)\times\{0\}\times &L^q(\Ga_{\text{in},T})\times L^q(\Si_T)\times\{0\}\times\{0\}\times L^q(\Om)\times\{0\})
\end{align*} for $n\in \N$, such that $F_n\rightarrow F$ in $\mathbb{Y}_{p,q}$ (e.g.\ by extension of $F$ to $\R\times\R^3$ respectively $\R\times (\pa A \times \R)$ and mollification). Let $v_n\in \E^\Om_r(T)$ denote the corresponding solution of $\cL_T v_n=F_n$. Then Case 1 applies to $F_n,v_n$ and there exists a $C>0$ independent of $n\in\N$, such that
\begin{align}\label{eq:Lq-estimate-approximation}
\Vert v_n \Vert_{L^q(\Om_T)}+\Vert v_n\Vert_{L^q(\Si_T)} \leq C \Vert F_n\Vert_{L^q}.
\end{align} Obviously $(v_n)_n$ is a Cauchy sequence in
\begin{align*}
\mathbb{X}_{p,q}:=\E^\Om_p(T)\cap \left(L^q(\Om_T)\times
L^q(\Si_T)\right),
\end{align*}
such that we may pass to the limit $n\rightarrow \infty$  in (\ref{eq:Lq-estimate-approximation}). Hence we obtain $v_n\rightarrow v$ in $\mathbb{X}_{p,q}$ with $v$ being the solution of $\cL_T v=F$ and
 \begin{align*}
\Vert v\Vert_{L^q(\Om_T)}+\Vert v\Vert_{L^q(\Si_T)} \leq C \Vert F\Vert_{L^q}.
\end{align*}
\end{proof}

The next Lemma is standard for equations on standard domains $\Om$, cf. \cite[Lemma 3.4]{Pie10}. Here we give a proof since we employ it on $\Si$.
\begin{lemma}\lemlabel{Weak-Estimates-Boundary}
	Let $T^\ast >0$ and let $1 <p,\,q<\infty$.
	Let $\mu > 0$ and let arbitrary coefficients $\alpha_1,\,\dots,\,\alpha_N,\,\beta_1,\,\dots,\,\beta_N \in \bR$ be given.
	Assume the data satisfies $f,\,g_1,\,\dots,\,g_n \in \bF^\Sigma_p(T)$ and $u_0,\,v^{1}_0,\,\dots,\,v^{N}_0 \in \bI^\Sigma_p \cap BC(\Sigma)$
	with $\partial_{\nu_\Sigma} u_0 = \partial_{\nu_\Sigma} v^{j}_0 = 0$ on $\partial \Sigma$, if $p > 3$.
	Furthermore, let $u,\,v_1,\,\dots,\,v_N \in \bE^\Sigma_p(T)$ be strong solutions to
	\begin{equation}\label{eq:duality-Lm-forward-heat-equation}
		\left\{\begin{array}{rclrclll}
			\partial_t u - \mu \Delta_\Si u & = & f,   & \qquad \alpha_j \partial_t v_j - \beta_j \Delta_\Si v_j & = & g_j       & \qquad \mbox{on} & (0,\,T) \times \Sigma,          \\[0.5em]
			  - \mu \partial_{\nu_\Sigma} u & = & 0,   & \qquad              - \beta_j \partial_{\nu_\Sigma} v_j & = & 0         & \qquad \mbox{on} & (0,\,T) \times \partial \Sigma, \\[0.5em]
			                           u(0) & = & u_0, & \qquad                                           v_j(0) & = & v^{j}_0 & \qquad \mbox{on} & \Sigma
		\end{array}\right.
	\end{equation}
	for some $0 < T < T^\ast$.
	If $f \leq \sum^N_{j = 1} g_j$, then 
\begin{equation*} \|u^+\|_{L^q(\Sigma_T)} \leq C \left(1 + \sum^N_{j = 1} \|v_j\|_{L^q(\Sigma_T)}\right)
\end{equation*}
 for some constant $C = C(\|u_0\|_{BC(\Sigma)},\,\|v^{1}_0\|_{BC(\Sigma)},\,\dots,\,\|v^{N}_0\|_{BC(\Sigma)}) > 0$
	that is independent of $0 < T < T^\ast$.
\end{lemma}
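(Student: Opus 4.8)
The plan is to carry out the classical duality argument of Pierre (\cite[Lemma~3.4]{Pie10}) on the surface $\Si$. First, by $L^q$--$L^{q'}$ duality and the fact that only nonnegative test functions detect the positive part,
\begin{equation*}
\|u^+\|_{L^q(\Si_T)} = \sup\Big\{\,\textstyle\int_{\Si_T} u\,\psi\;:\;0\le\psi\in L^{q'}(\Si_T),\ \|\psi\|_{L^{q'}(\Si_T)}\le 1\,\Big\},
\end{equation*}
so it suffices to bound $\int_{\Si_T} u\,\psi$ for each such $\psi$. For fixed $\psi$ I would solve the backward dual problem
\begin{equation*}
-\pa_t\phi - \mu\Delta_\Si\phi = \psi\ \text{ on }(0,T)\times\Si,\qquad \pa_{\nu_\Si}\phi = 0\ \text{ on }(0,T)\times\pa\Si,\qquad \phi(T)=0;
\end{equation*}
after reversing time this is precisely the homogeneous Neumann heat problem on $\Si$ treated in Subsection~\ref{sec1}, so $\phi\in\E^\Si_{q'}(T)$ exists uniquely, and $\phi\ge 0$ by \Lemref{Linear-Positivity}~b) applied to the time-reversed equation.

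The heart of the argument is then two integrations by parts. Testing the $u$-equation against $\phi$ and using the Neumann conditions on $\pa\Si$ together with $\phi(T)=0$ yields
\begin{equation*}
\int_{\Si_T} u\,\psi = \int_\Si u_0\,\phi(0)\,d\si + \int_{\Si_T} f\,\phi,
\end{equation*}
whence, since $\phi\ge 0$ and $f\le\sum_j g_j$, one gets $\int_{\Si_T}u\,\psi\le\int_\Si u_0\,\phi(0)\,d\si + \sum_{j=1}^N\int_{\Si_T}g_j\,\phi$. Inserting $g_j=\alpha_j\pa_t v_j-\beta_j\Delta_\Si v_j$ and integrating by parts once in time and once in space (again using the Neumann conditions for $v_j$ and $\phi$, and $\phi(T)=0$) gives
\begin{equation*}
\int_{\Si_T} g_j\,\phi = -\alpha_j\int_\Si v^j_0\,\phi(0)\,d\si - \alpha_j\int_{\Si_T} v_j\,\pa_t\phi - \beta_j\int_{\Si_T} v_j\,\Delta_\Si\phi.
\end{equation*}
All of these manipulations are legitimate because $u,v_j,\phi$ lie in the maximal regularity spaces $\E^\Si$, which embed into spaces where the traces on $\pa\Si$ and the pairings are meaningful; a routine density argument reduces everything to smooth functions.

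It remains to estimate the terms on the right, and the only delicate point is uniformity in $T$. By maximal $L^{q'}$-regularity of the homogeneous Neumann Laplace--Beltrami operator (Subsection~\ref{sec1}), applied to the dual problem which has zero final time trace, I would obtain
\begin{equation*}
\|\pa_t\phi\|_{L^{q'}(\Si_T)} + \|\Delta_\Si\phi\|_{L^{q'}(\Si_T)}\le M\,\|\psi\|_{L^{q'}(\Si_T)}
\end{equation*}
with $M$ independent of $T<T^\ast$, the $T$-independence being secured exactly as in Lemma~\ref{lm:withoutPerturbation}, via the zero-time-trace extension operator of \cite{Pruess-Saal-Simonett:Stefan-Analytic-Classical}. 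In addition, representing $\phi$ through the contractive, positivity-preserving heat semigroup on $\Si$ gives $\|\phi(0)\|_{L^{q'}(\Si)}\le\int_0^T\|\psi(s)\|_{L^{q'}(\Si)}\,ds\le (T^\ast)^{1/q}\,\|\psi\|_{L^{q'}(\Si_T)}$. Feeding these bounds, together with $\|\psi\|_{L^{q'}(\Si_T)}\le 1$, H\"older's inequality on $\Si$ and $\|u_0\|_{L^\infty(\Si)},\|v^j_0\|_{L^\infty(\Si)}\le\|\cdot\|_{BC(\Si)}$, into the two identities above, and taking the supremum over $\psi$, produces
\begin{equation*}
\|u^+\|_{L^q(\Si_T)}\le C\Big(1+\textstyle\sum_{j=1}^N\|v_j\|_{L^q(\Si_T)}\Big)
\end{equation*}
with $C$ depending on $T^\ast$, $\mu$, the coefficients $\alpha_j,\beta_j$, $|\Si|$, and on $\|u_0\|_{BC(\Si)},\|v^j_0\|_{BC(\Si)}$, but not on $T<T^\ast$. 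I expect the main obstacle to be precisely this $T$-uniformity of the dual estimate: since $-\mu\Delta_\Si$ is invertible only after a spectral shift, one must either absorb the shift on the finite interval $(0,T)$ or invoke the zero-trace extension trick; the remaining estimates are bookkeeping.
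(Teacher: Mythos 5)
Your proposal is correct and follows essentially the same route as the paper: the duality formula for $\|u^+\|_{L^q(\Si_T)}$, the nonnegative backward dual problem with $T$-uniform maximal $L^{q'}$-regularity, the comparison $f\le\sum_j g_j$, and the double integration by parts shifting $\pa_t$ and $\Delta_\Si$ onto $\phi$. The only (immaterial) deviation is that you bound $\|\phi(0)\|_{L^{q'}(\Si)}$ via semigroup contractivity and H\"older in time, whereas the paper reads it off from the maximal-regularity estimate $\sup_{s}\|\phi(s)\|_{L^{q'}(\Si)}\le C\|\theta\|_{L^{q'}(\Si_\tau)}$.
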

\begin{proof}
Let $\theta \in L^{q^\prime}(\Sigma_T)^+$ with $1/q+1/q'=1$ and let $0<\tau<T$. By the transformation $t\mapsto \tau -t$ applied to the equation for $u$ in (\ref{eq:duality-Lm-forward-heat-equation}) with data $f=\theta$, $u_0=0$ we obtain the backward heat equation,
\begin{align}\label{eq:backward-heat-equation}
\left\{\begin{array}{rcll}
-[\pa_t \phi +\mu \Delta_\Sigma \phi]&=&\theta(\tau-t)&\text{on}\; (0,\tau)\times\Si,\\
-\mu \pa_{\ns}\phi&=&0&\text{on}\;(0,\tau)\times\pa\Si,\\
\phi(\tau)&=&0&\text{on}\;\Si,
 \end{array}\right.
\end{align}
which admits maximal $L^{q'}$-regularity, cf.\ Section \ref{sec:linear_equations}. In particular, we have $\phi \in BUC([0,\tau],L^{q'}(\Si))$ and
\begin{align*}
\sup_{s\in[0,\tau]}\Vert \phi(s)\Vert_{L^{q'}(\Si)}+\Vert \pa_t\phi\Vert_{L^{q'}(\Si_\tau)}+\mu \Vert \Delta_\Si\phi \Vert_{L^{q'}(\Si_\tau)} \leq C \Vert \theta\Vert_{L^{q'}(\Si_\tau)}
\end{align*} for a constant $C>0$ which is independent of $\tau$. Observe that $\phi\geq 0$, since $\theta\geq 0$. Hence by plugging in the first line of (\ref{eq:backward-heat-equation}) and multiple partial integrations we have
\begin{align*}
\int_{\Si_\tau} u\theta d(t,\si) &=\int_\Si u_0\phi(0) d\si + \int_{\Si_\tau}\phi(\pa_t u-\mu \Delta_\Si u) d(t,\si)  \\
&\leq \int_\Si u_0\phi(0) d\si +\sum_{j=1}^N \int_{\Si_\tau} \phi (\alpha_j\pa_t v_j -\beta_j \Delta_\Si v_j) d(t,\si) \\
&=\int_\Si (u_0-\sum_{j=1}^N\alpha_j v^{j}_0)\phi(0) d\si + \sum_{j=1}^N \int_{\Si_\tau}\left(-\alpha_j\pa_t\phi -\beta_j \Delta_\Si \phi\right)v_j d(t,\si) \\
&\leq C \Vert \theta\Vert_{L^{q'}(\Si_\tau)}\left(1+\sum_{j=1}^N \Vert v_j\Vert_{L^q(\Si_\tau)}\right).
\end{align*} Employing
\begin{equation*}
		\|u^+\|_{L^q(\Sigma_T)} = \sup \left\{\ \int^T_0 \int_\Sigma u \theta\,\mbox{d}\sigma(x)\,\mbox{d}t\,:\,\theta \in L^{q^\prime}(\Sigma_T)^+,\ \|\theta\|_{L^{q^\prime}(\Sigma_T)} < 1\ \right\}
	\end{equation*}
	the assertion follows.
\end{proof}

Now, we are in position to prove \Thmref{Global-WP}.
\begin{proof}[Proof of \Thmref{Global-WP}]
	Since according to \Thmref{Local-WP} the (local-in-time) solutions to (\ref{eq:cat}) generate a local semi-flow in the
	phase space $\bI^\Omega_2 \times \bI^\Sigma_2 = H^1(\Omega)
	\times H^1(\Sigma)$,
	we may assume $T^\ast < \infty$ and show that the solution stays bounded in $H^1(\Omega) \times H^1(\Sigma)$ on $(0,\,T^\ast)$
	in order to obtain a contradiction.
	It is sufficient to establish $L^\infty$-bounds for the solution in order to obtain boundedness in the phase space.
Then the $H^1$-boundedness of solutions follows, as it is shown in the last part of this proof.

	We will now derive $L^\infty$-bounds, which requires several steps.
	Note that we may use the fact that $c_i,\,\cs_i \geq 0$ on $(0,\,T^\ast)$ thanks to \Lemref{Nonneg}.

	{\bfseries Step 1.}
	We have $c_i \in \bE^\Omega_2(T) = H^1((0,\,T),\,L^2(\Omega)) \cap L^2((0,\,T),\,H^2(\Omega))$ and
	\begin{equation}
		\eqnlabel{Global-WP-Base-Domain}
		\left\{\begin{array}{rclll}
			\partial_t c_i + (u \cdot \nabla) c_i - d_i \Delta c_i & = & f_i                                   & \quad \mbox{in} & (0,\,T) \times \Omega,                \\[0.5em]
		                (u \cdot \nu) c_i - d_i \partial_\nu c_i & = & g^{\textrm{in}}_i                     & \quad \mbox{on} & (0,\,T) \times \Gamma_{\textrm{in}},  \\[0.5em]
		                                  - d_i \partial_\nu c_i & = & r^{\textrm{sorp}}_i(c_i,\,\cs_i) & \quad \mbox{on} & (0,\,T) \times \Sigma,                \\[0.5em]
		                                  - d_i \partial_\nu c_i & = & 0                                     & \quad \mbox{on} & (0,\,T) \times \Gamma_{\textrm{out}}, \\[0.5em]
			                                                c_i(0) & = & c_{0,i}                               & \quad \mbox{in} & \Omega
		\end{array}\right.
	\end{equation}
	for $i = 1,\,\dots,\,N$ and all $0 < T < T^\ast$.
	Now, $r^{\textrm{sorp}}_i(c_i,\,\cs_i) \geq - k^{\textrm{de}}_i \cs_i$.
	Thus, by \Lemref{Linear-Positivity}~a) we have $0 \leq c_i \leq z_i$ for the unique maximal regular solution $z_i$ to
	\begin{equation*}
		\left\{\begin{array}{rclll}
			\partial_t z_i + (u \cdot \nabla) z_i - d_i \Delta z_i & = & f_i               & \quad \mbox{in} & (0,\,T) \times \Omega,                \\[0.5em]
		                (u \cdot \nu) z_i - d_i \partial_\nu z_i & = & g^{\textrm{in}}_i & \quad \mbox{on} & (0,\,T) \times \Gamma_{\textrm{in}},  \\[0.5em]
		                                  - d_i \partial_\nu z_i & = &-C \cs_i      & \quad \mbox{on} & (0,\,T) \times \Sigma,                \\[0.5em]
		                                  - d_i \partial_\nu z_i & = & 0                 & \quad \mbox{on} & (0,\,T) \times \Gamma_{\textrm{out}}, \\[0.5em]
			                                                z_i(0) & = & c_{0,i}           & \quad \mbox{in} & \Omega
		\end{array}\right.
	\end{equation*}
	with some appropriate constant $C = C((k^{\textrm{de}}_j)_{j = 1, \dots, N}) > 0$.
	Note that this problem allows for a strong solution in the $L^2$-setting without any compatibility conditions
	between the right hand sides of the boundary conditions and the initial value.
	Since $z_i$ is a solution to a linear problem, we may write $z_i = u_i + v_i$,
	where $u_i$ solves
	\begin{equation*}
		\left\{\begin{array}{rclll}
			\partial_t u_i + (u \cdot \nabla) u_i - d_i \Delta u_i & = & 0            & \quad \mbox{in} & (0,\,T) \times \Omega,                \\[0.5em]
		                (u \cdot \nu) u_i - d_i \partial_\nu u_i & = & 0            & \quad \mbox{on} & (0,\,T) \times \Gamma_{\textrm{in}},  \\[0.5em]
		                                  - d_i \partial_\nu u_i & = & -C \cs_i & \quad \mbox{on} & (0,\,T) \times \Sigma,                \\[0.5em]
		                                  - d_i \partial_\nu u_i & = & 0            & \quad \mbox{on} & (0,\,T) \times \Gamma_{\textrm{out}}, \\[0.5em]
			                                                u_i(0) & = & 0            & \quad \mbox{in} & \Omega
		\end{array}\right.
	\end{equation*}
	and $v_i$ solves
	\begin{equation*}
		\left\{\begin{array}{rclll}
			\partial_t v_i + (u \cdot \nabla) v_i - d_i \Delta v_i & = & f_i               & \quad \mbox{in} & (0,\,T) \times \Omega,                \\[0.5em]
		                (u \cdot \nu) v_i - d_i \partial_\nu v_i & = & g^{\textrm{in}}_i & \quad \mbox{on} & (0,\,T) \times \Gamma_{\textrm{in}},  \\[0.5em]
		                                  - d_i \partial_\nu v_i & = & 0                 & \quad \mbox{on} & (0,\,T) \times \Sigma,                \\[0.5em]
		                                  - d_i \partial_\nu v_i & = & 0                 & \quad \mbox{on} & (0,\,T) \times \Gamma_{\textrm{out}}, \\[0.5em]
			                                                v_i(0) & = & c_{0,i}            & \quad \mbox{in} & \Omega
		\end{array}\right.
	\end{equation*}
	For these solutions we have
	\begin{equation*}
		\|u_i\|_{L^q(\Omega_T)} + \|u_i\|_{L^q(\Sigma_T)} \leq C^\prime \|\cs_i\|_{L^q(\Sigma_T)}
	\end{equation*}
	as well as
	\begin{equation*}
		\|v_i\|_{L^q(\Omega_T)} + \|v_i\|_{L^q(\Sigma_T)} \leq A_i
	\end{equation*}
	provided that $2 \leq q \leq\infty$.
	Here, we employed \Lemref{Weak-Estimates-Domain} to obtain constants $C^\prime = C^\prime(C,\,q) > 0$
	and $A_i = A_i(\|f_i\|_{L^q(\Omega_T)},\,\|g^{\textrm{in}}_i\|_{L^q(\Gamma_{\textrm{in},T})},\,\|c_{0,i}\|_{L^q(\Omega)},\,q) > 0$
	that are independent of $0 < T < T^\ast$.
	Since $\|c_i\|_{L^q} \leq \|z_i\|_{L^q} \leq \|u_i\|_{L^q} + \|v_i\|_{L^q}$ we may sum up the above estimates to obtain
	\begin{equation}
		\eqnlabel{Global-WP-Estimates-Domain}
		\sum^N_{i = 1} \|c_i\|_{L^q(\Omega_T)} + \sum^N_{i = 1} \|c_i\|_{L^q(\Sigma_T)} \leq C^\ast \left( 1 + \sum^N_{j = 1} \|\cs_j\|_{L^q(\Sigma_T)} \right),
	\end{equation}
	where $C^\ast = C^\ast(C^\prime, (A_j)_{j = 1, \dots, N}) > 0$ denotes a constant that is independent of $0 < T < T^\ast$.
	Note that this estimate is available for all $2 \leq q \leq \infty$.

	{\bfseries Step 2.}
	We have $\cs_i \in \bE^\Sigma_2(T) = H^1((0,\,T),\,L^2(\Sigma)) \cap L^2((0,\,T),\,H^2(\Sigma))$ and
	\begin{equation}
		\eqnlabel{Global-WP-Base-Boundary}
		\left\{\begin{array}{rclll}
			\partial_t \cs_i - \ds_i \Delta_\Sigma \cs_i & = & r^{\textrm{sorp}}_i(c_i,\,\cs_i) + r^{\textrm{ch}}_i(\cs) & \quad \mbox{on} & (0,\,T) \times \Sigma,          \\[0.5em]
			              - \ds_i \partial_{\nu_\Sigma} \cs_i & = & 0                                                                   & \quad \mbox{on} & (0,\,T) \times \partial \Sigma, \\[0.5em]
			                                              \cs_i(0) & = & \cs_{0,i}                                                       & \quad \mbox{on} & \Sigma			
		\end{array}\right.
	\end{equation}
	for $i = 1,\,\dots,\,N$ and all $0 < T < T^\ast$.
	Now we use the triangular structure of the reaction rates that is guaranteed by ($\textrm{A}^{\textrm{ch}}_{\textrm{S}}$) to
	treat the cases $i = 1$ and $i = 2,\,\dots,\,N$ separately.

	{\bfseries Step 2.1.}
	According to assumption ($\textrm{A}^{\textrm{sorp}}_{\textrm{B}}$) we have $r^{\textrm{sorp}}_1(c_1,\,\cs_1) \leq k^{\textrm{ad}}_1 c_1$
	and according to ($\textrm{A}^{\textrm{ch}}_{\textrm{S}}$) we have $q_{11} r^{\textrm{ch}}_1(\cs) \leq C (1 + \sum^N_{j = 1} \cs_j)$
	for some fixed constant $C > 0$.
	Thus, by \Lemref{Linear-Positivity}~b) we have $0 \leq \cs_1 \leq \zs_1$ for the unique solution $\zs_1$ to
	\begin{equation*}
		\left\{\begin{array}{rclll}
			\partial_t \zs_1 - \ds_1 \Delta_\Sigma \zs_1 & = & C^\prime \Big( 1 + c_1 + \sum^N_{j = 1} \cs_j \Big) & \quad \mbox{on} & (0,\,T) \times \Sigma,          \\[0.5em]
			              - \ds_1 \partial_{\nu_\Sigma} \zs_1 & = & 0                                                        & \quad \mbox{on} & (0,\,T) \times \partial \Sigma, \\[0.5em]
			                                              \zs_1(0) & = & \cs_{0,1}                                            & \quad \mbox{on} & \Sigma
		\end{array}\right.		
	\end{equation*}
	with some appropriate constant $C^\prime = C^\prime(C,\,k^{\textrm{ad}}_1,\,q_{11}) > 0$.
	Since $\zs_1$ is a solution to a linear problem, we may write $\zs_1 = \us_1 + \vs_1$,
	where $\us_1$ solves
	\begin{equation*}
		\left\{\begin{array}{rclll}
			\partial_t \us_1 - \ds_1 \Delta_\Sigma \us_1 & = & C^\prime \Big( 1 + c_1 + \sum^N_{j = 1} \cs_j \Big) & \quad \mbox{on} & (0,\,T) \times \Sigma,          \\[0.5em]
			              - \ds_1 \partial_{\nu_\Sigma} \us_1 & = & 0                                                        & \quad \mbox{on} & (0,\,T) \times \partial \Sigma, \\[0.5em]
			                                              \us_1(0) & = & 0                                                        & \quad \mbox{on} & \Sigma
		\end{array}\right.		
	\end{equation*}
	and $\vs_1$ solves
	\begin{equation*}
		\left\{\begin{array}{rclll}
			\partial_t \vs_1 - \ds_1 \Delta_\Sigma \vs_1 & = & 0             & \quad \mbox{on} & (0,\,T) \times \Sigma,          \\[0.5em]
			              - \ds_1 \partial_{\nu_\Sigma} \vs_1 & = & 0             & \quad \mbox{on} & (0,\,T) \times \partial \Sigma, \\[0.5em]
			                                              \vs_1(0) & = & \cs_{0,1} & \quad \mbox{on} & \Sigma.
		\end{array}\right.		
	\end{equation*}
	For these solutions we have
	\begin{equation*}
		\|\us_1\|_{L^q(\Sigma_T)} \leq M \|\us_1\|_{{}_0 \bE^\Sigma_p(T)} \leq L M C^\prime \left( \Theta + \|c_1\|_{L^p(\Sigma_T)} + \sum^N_{j = 1} \|\cs_j\|_{L^p(\Sigma_T)} \right)
	\end{equation*}
	with $\Theta = \Theta(p) = \|1\|_{L^p(\Sigma_{T^\ast})}$ as well as
	\begin{equation*}
		\|(\vs_1)^+\|_{L^q(\Sigma_T)} \leq A_1,
	\end{equation*}
	provided that $2 \leq p,\,q < \infty$.
	Here, $M = M(p,\,q) > 0$ denotes the norm of the embedding ${}_0 \bE^\Sigma_p(T) \hookrightarrow L^q(\Sigma_T)$, cf.\ \cite{Amann-Anisotropic-Function-Spaces}
	and $L = L(p) > 0$ denotes the norm of the solution operator in the $L^p$-setting for the time interval $(0,\,T)$,
	which are both independent of $0 < T < T^\ast$ thanks to the homogeneous initial condition.
	Furthermore, $A_1 = A_1(\|\cs_{0, 1}\|_{BC(\Sigma)},\,q) > 0$ denotes the constant delivered by \Lemref{Weak-Estimates-Boundary},
	which is also independent of $0 < T < T^\ast$. Observe, that a standard maximum principle could have been applied here, too.
	Note that $\vs_1\leq (\vs_1)^+$ implies $0 \leq \cs_1\leq \zs_1 \leq \us_1 + (\vs_1)^+$.
	Therefore,
	\begin{equation*}
		\begin{array}{rcl}
			\|\cs_1\|_{L^q(\Sigma_T)} & \leq & \|\zs_1\|_{L^q(\Sigma_T)} \leq \|\us_1\|_{L^q(\Sigma_T)} + \|(\vs_1)^+\|_{L^q(\Sigma_T)} \\[1.0em]
				& \leq & C^{\prime\prime} {\displaystyle \left( 1 + \|c_1\|_{L^p(\Sigma_T)} + \sum^N_{j = 1} \|\cs_j\|_{L^p(\Sigma_T)} \right),}
		\end{array}
	\end{equation*}
	provided that $2 \leq p,\,q < \infty$.
	Here $C^{\prime\prime} = C^{\prime\prime}(L M C^\prime,\,\Theta,\,A_1) > 0$ is independent of $0 < T < T^\ast$.

	{\bfseries Step 2.2.}
	Now fix $i \in \{\,2,\,\dots,\,N\,\}$.
	By \eqnrefbr{Global-WP-Base-Boundary} we obtain
	\begin{equation*}
		q_{ii} (\partial_t \cs_i - \ds_i \Delta_\Sigma \cs_i) + \sum_{j < i} q_{ij} \big( \partial_t \cs_j - \ds_j \Delta_\Sigma \cs_j \big) = [Q r^{\textrm{sorp}}(c,\,\cs)]_i + [Q r^{\textrm{ch}}(\,\cs)]_i
	\end{equation*}
	on $(0,\,T) \times \Sigma$.
	According to assumption ($\textrm{A}^{\textrm{sorp}}_{\textrm{B}}$) we have
	\begin{equation*}
		[Q r^{\textrm{sorp}}(c,\,\cs)]_i = \sum_{j \leq i}q_{ij} r^{\textrm{sorp}}_j(c_j,\,\cs_j) \leq \sum_{j \leq i, q_{ij}>0}q_{ij}k^\text{ad}_jc_j - \sum_{j \leq i, q_{ij}<0}q_{ij}k^\text{de}_j\cs_j
	\end{equation*}
	and according to ($\textrm{A}^{\textrm{ch}}_{\textrm{S}}$) we have $[Q r^{\textrm{ch}}(\,\cs)]_i \leq C (1 + \sum^N_{j = 1} \cs_j)$
	for some fixed constant $C > 0$.
	Thus, by \Lemref{Linear-Positivity}~b) we have $0 \leq \cs_i \leq \zs_i$ for the unique maximal regular solution $\zs_i$ to
	\begin{equation*}
		\left\{\begin{array}{rclll}
			\partial_t \zs_i - \ds_i \Delta_\Sigma \zs_i & = & C^\prime \Big( 1 + \sum_{j \leq i} c_j + \sum^N_{j = 1} \cs_j \Big) \\[1.0em]
			                                                            &   & \quad - \sum_{j < i} r_{ij} \big( \partial_t \cs_j - \ds_j \Delta_\Sigma \cs_j \big) & \quad \mbox{in} & (0,\,T) \times \Sigma,          \\[0.5em]
			              - \ds_i \partial_{\nu_\Sigma} \zs_i & = & 0                                                                                                   & \quad \mbox{on} & (0,\,T) \times \partial \Sigma, \\[0.5em]
			                                              \zs_i(0) & = & \cs_{0,i}                                                                                       & \quad \mbox{on} & \Sigma
		\end{array}\right.		
	\end{equation*}
	with some appropriate constant $C^\prime = C^\prime(C,\,k^{\textrm{ad}}_j,\,k^\text{de}_j,\,q_{ij}) > 0$ and $r_{ij} = q_{ij} / q_{ii}$.
	Since $\zs_i$ is a solution to a linear problem, we may write $\zs_i = \us_i + \vs_i$,
	where $\us_i$ solves
	\begin{equation*}
		\left\{\begin{array}{rclll}
			\partial_t \us_i - \ds_i \Delta_\Sigma \us_i & = & C^\prime \Big( 1 + \sum_{j \leq i} c_j + \sum^N_{j = 1} \cs_j \Big) & \quad \mbox{in} & (0,\,T) \times \Sigma,          \\[0.5em]
			              - \ds_i \partial_{\nu_\Sigma} \us_i & = & 0                                                                        & \quad \mbox{on} & (0,\,T) \times \partial \Sigma, \\[0.5em]
			                                              \us_i(0) & = & 0                                                                        & \quad \mbox{on} & \Sigma
		\end{array}\right.		
	\end{equation*}
	and $\vs_i$ solves
	\begin{equation*}
		\left\{\begin{array}{rclll}
			\partial_t \vs_i - \ds_i \Delta_\Sigma \vs_i & = & - \sum_{j < i} r_{ij} \big( \partial_t \cs_j - \ds_j \Delta_\Sigma \cs_j \big) & \quad \mbox{in} & (0,\,T) \times \Sigma,          \\[0.5em]
			              - \ds_i \partial_{\nu_\Sigma} \vs_i & = & 0                                                                                             & \quad \mbox{on} & (0,\,T) \times \partial \Sigma, \\[0.5em]
			                                              \vs_i(0) & = & \cs_{0,i}                                                                                 & \quad \mbox{on} & \Sigma.
		\end{array}\right.		
	\end{equation*}
	For these solutions we have
	\begin{equation*}
		\|\us_i\|_{L^q(\Sigma_T)} \leq M \|\us_i\|_{{}_0 \bE^\Sigma_p(T)} \leq L M C^\prime \left( \Theta + \sum_{j \leq i} \|c_j\|_{L^p(\Sigma_T)} + \sum^N_{j = 1} \|\cs_j\|_{L^p(\Sigma_T)} \right)
	\end{equation*}
	with $\Theta = \Theta(p) = \|1\|_{L^p(\Sigma_{T^\ast})}$ as well as
	\begin{equation*}
		\|(\vs_i)^+\|_{L^q(\Sigma_T)} \leq A_i \left( 1 + \sum_{j < i} \|\cs_j\|_{L^q(\Sigma_T)} \right),
	\end{equation*}
	provided that $2 \leq p,\,q < \infty$.
	Here, $M = M(p,\,q) > 0$ denotes the norm of the embedding ${}_0 \bE^\Sigma_p(T) \hookrightarrow L^q(\Sigma_T)$
	and $L = L(p) > 0$ denotes the norm of the solution operator in the $L^p$-setting for the time interval $(0,\,T)$,
	which are both independent of $0 < T < T^\ast$ thanks to the homogeneous initial condition as in Step 2.1.
	Furthermore, $A_i = A_i((\|\cs_{0, j}\|_{BC(\Sigma)})_{j = 1, \dots, i},\,q) > 0$ denotes the constant delivered by \Lemref{Weak-Estimates-Boundary},
	which is also independent of $0 < T < T^\ast$.
	We again have $\cs_i \leq \zs_i \leq \us_i + (\vs_i)^+$.
	Therefore,
	\begin{equation*}
		\begin{array}{rcl}
			\|\cs_i\|_{L^q(\Sigma_T)} & \leq & \|\zs_i\|_{L^q(\Sigma_T)} \leq \|\us_i\|_{L^q(\Sigma_T)} + \|(\vs_i)^+\|_{L^q(\Sigma_T)} \\[1.0em]
				& \leq & C^{\prime\prime} {\displaystyle \left( 1 + \sum_{j \leq i} \|c_j\|_{L^p(\Sigma_T)} + \sum^N_{j = 1} \|\cs_j\|_{L^p(\Sigma_T)} \right) + A_i \sum_{j < i} \|\cs_j\|_{L^q(\Sigma_T)},}
		\end{array}
	\end{equation*}
	provided that $2 \leq p,\,q < \infty$.
	Here $C^{\prime\prime} = C^{\prime\prime}(L M C^\prime,\,A_i,\,\Theta) > 0$ is independent of $0 < T < T^\ast$.

	{\bfseries Step 2.3.}
	Now we may combine the estimates obtained in Steps 2.1 and 2.2, recursively, and infer that
	\begin{equation}
		\eqnlabel{Global-WP-Estimates-Boundary}
		\sum^N_{i = 1} \|\cs_i\|_{L^q(\Sigma_T)} \leq C^\ast \left( 1 + \sum^N_{j = 1} \|c_j\|_{L^p(\Sigma_T)} + \sum^N_{j = 1} \|\cs_j\|_{L^p(\Sigma_T)} \right)
	\end{equation}
	provided that $2 \leq p,\,q < \infty$.
	Here, $C^\ast = C^\ast(C^{\prime\prime},\,(A_j)_{j = 1, \dots, N}) > 0$ is independent of $0 < T < T^\ast$.

	{\bfseries Step 3.}
	Now we combine estimates (\ref{eqn:Global-WP-Estimates-Domain}) and (\ref{eqn:Global-WP-Estimates-Boundary}) to obtain
	\begin{equation}
		\eqnlabel{Global-WP-Estimates-Final}
		\begin{array}{l}
			{\displaystyle \sum^N_{i = 1} \|c_i\|_{L^q(\Omega_T)} + \sum^N_{i = 1} \|c_i\|_{L^q(\Sigma_T)} + \sum^N_{i = 1} \|\cs_i\|_{L^q(\Sigma_T)}} \\[2.0em]
				\qquad \qquad \leq {\displaystyle C^\ast \left( 1 + \sum^N_{j = 1} \|c_j\|_{L^p(\Omega_T)} + \sum^N_{j = 1} \|c_j\|_{L^p(\Sigma_T)} + \sum^N_{j = 1} \|\cs_j\|_{L^p(\Sigma_T)} \right)},
		\end{array}
	\end{equation}
	provided that $2 \leq p,\,q < \infty$.
	Here, $C^\ast > 0$ is independent of $0 < T < T^\ast$.
	Using this inequality for $p = 2$ we may in particular obtain $L^q$-$L^2$-estimates for arbitrary $2 \leq q < \infty$.

	{\bfseries Step 4.1.}
	The surface concentrations satisfy
	\begin{equation*}
		\left\{\begin{array}{rclcl}
			\pa_t \cs_i - d^{\scriptscriptstyle\Si}_i \Delta_\Si \cs_i &=& \fs_i & \text{on}& (0,T)\times \Si, \\
			-\ds_i\pa_{\nu_\Si} \cs_i &=&0 &\text{on}& (0,T)\times \pa \Si, \\
			{\cs_i}|_{t=0}&=&\cs_{0,i} & \text{on}& \Si.
		\end{array}\right.
	\end{equation*}
	with $\fs_i := r^\text{sorp}_i(c_i,\cs_i)+r^\text{ch}_i(\cs)$.
	Due to the polynomial growth of the nonlinearities we may estimate $\fs$ in terms of $c$ and $\cs$
	and employ (\ref{eqn:Global-WP-Estimates-Final}) to obtain
	\begin{equation*}
		{\displaystyle \sum^N_{i = 1} \|\fs_i\|_{L^r(\Sigma_T)}}
			\leq {\displaystyle C^\ast \left( 1 + \sum^N_{j = 1} \|c_j\|_{L^p(\Omega_T)} + \sum^N_{j = 1} \|c_j\|_{L^p(\Sigma_T)} + \sum^N_{j = 1} \|\cs_j\|_{L^p(\Sigma_T)} \right)},
	\end{equation*}
	where $2 \leq p,\,r < \infty$ and $C^\ast > 0$ is independent of $0 < T < T^\ast$.
	Thus, for given $2 \leq q \leq \infty$ we may use this estimate for sufficiently large $2 \leq r < \infty$
	together with a classical result from \cite{LSU}, which yields the estimate
	\begin{equation}
		\eqnlabel{Global-WP-Estimates-Boundary-Infinity}
		{\displaystyle \sum^N_{i = 1} \|\cs_i\|_{L^q(\Sigma_T)}}
			\leq {\displaystyle C^\ast \left( 1 + \sum^N_{j = 1} \|c_j\|_{L^p(\Omega_T)} + \sum^N_{j = 1} \|c_j\|_{L^p(\Sigma_T)} + \sum^N_{j = 1} \|\cs_j\|_{L^p(\Sigma_T)} \right)}.
	\end{equation}
	Let us note that \cite[Theorem~III.7.1]{LSU} is stated for Dirichlet boundary conditions,
	but the result remains true in the Neumann case; see \cite[Theorem~4]{Bothe-Rolland:Global-Existence}, whose proof carries over to smooth manifolds as $\Sigma$.
	Note that in contrast to (\ref{eqn:Global-WP-Estimates-Boundary}) obtained in the second step,
	the estimate (\ref{eqn:Global-WP-Estimates-Boundary-Infinity}) is available for all $2 \leq p < \infty$ and all $2 \leq q \leq \infty$,
	while $C^\ast > 0$ is still independent of $0 < T < T^\ast$.

	{\bfseries Step 4.2.}
	Now we combine estimates (\ref{eqn:Global-WP-Estimates-Domain}) and (\ref{eqn:Global-WP-Estimates-Boundary-Infinity}) to obtain
	\begin{equation}
		\eqnlabel{Global-WP-Estimates-Final-Infinity}
		\begin{array}{l}
			{\displaystyle \sum^N_{i = 1} \|c_i\|_{L^q(\Omega_T)} + \sum^N_{i = 1} \|c_i\|_{L^q(\Sigma_T)} + \sum^N_{i = 1} \|\cs_i\|_{L^q(\Sigma_T)}} \\[2.0em]
				\qquad \qquad \leq {\displaystyle C^\ast \left( 1 + \sum^N_{j = 1} \|c_j\|_{L^p(\Omega_T)} + \sum^N_{j = 1} \|c_j\|_{L^p(\Sigma_T)} + \sum^N_{j = 1} \|\cs_j\|_{L^p(\Sigma_T)} \right)},
		\end{array}
	\end{equation}
	provided that $2 \leq p < \infty$ and $2 \leq q \leq \infty$.
	Here, $C^\ast > 0$ is independent of $0 < T < T^\ast$.
	Using this inequality for $p = 2$ we may in particular obtain $L^q$-$L^2$-estimates for arbitrary $2 \leq q \leq \infty$.

	{\bfseries Step 5.1.}
	The estimate (\ref{eqn:Global-WP-Estimates-Final-Infinity}) applied, for $p = 2$ and $q = \infty$, yields
	\begin{equation*}
		\begin{array}{l}
			{\displaystyle \sum^N_{i = 1} \|c_i(t)\|_{L^2(\Omega)}^2 + \sum^N_{i = 1} \|c_i(t)\|_{L^2(\Sigma)}^2 + \sum^N_{i = 1} \|\cs_i(t)\|_{L^2(\Sigma)}^2} \\[2.0em]
				\qquad \qquad \leq {\displaystyle C^\ast \left( 1 + \sum^N_{j = 1} \int^t_0 \|c_j(s)\|_{L^2(\Omega)}^2\,\mbox{d}s + \sum^N_{j = 1} \int^t_0 \|c_j(s)\|_{L^2(\Sigma)}^2\,\mbox{d}s + \sum^N_{j = 1} \int^t_0 \|\cs_j(s)\|_{L^2(\Sigma)}^2\,\mbox{d}s \right)}
		\end{array}
	\end{equation*}
	for all $0 < t < T < T^\ast$, where $C^\ast > 0$ is independent of $0 < T < T^\ast$.
	Thus, a standard Gronwall argument implies
	\begin{equation}
		\eqnlabel{Global-WP-Estimates-Gronwall}
		\|c_i\|_{L^2(\Omega_T)},\ \|c_i\|_{L^2(\Sigma_T)},\ \|\cs_i\|_{L^2(\Sigma_T)} \leq M e^{\omega T}, \qquad \qquad 0 < T < T^\ast,
	\end{equation}
	with some constants $M,\,\omega > 0$, which are independent of $0 < T < T^\ast$.

	{\bfseries Step 5.2.}
	The estimate (\ref{eqn:Global-WP-Estimates-Final-Infinity}) again applied for $p = 2$ and $q = \infty$ together with (\ref{eqn:Global-WP-Estimates-Gronwall}) yields
	\begin{equation}
		\eqnlabel{Global-WP-Estimates-Final-Gronwall}
		\|c_i\|_{L^\infty(\Omega_T)},\ \|c_i\|_{L^\infty(\Sigma_T)},\ \|\cs_i\|_{L^\infty(\Sigma_T)} \leq M e^{\omega T}, \qquad \qquad 0 < T < T^\ast,
	\end{equation}
	with some constants $M,\,\omega > 0$, which are independent of $0 < T < T^\ast$.

{\bfseries Step 6.}
Now the obtained a priori estimates (\ref{eqn:Global-WP-Estimates-Final-Gronwall}) carry over from $L^\infty$ to $H^1(\Omega)$ and $H^1(\Sigma)$. This may be seen by the following argument: Due to the $L^\infty$-estimates, the $L^2$-solution of (1) is contained in $\mathbb{E}^\Omega_p(T)\times \mathbb{E}^\Sigma_p(T)$ for each $1<p<\infty$ with $p\neq 3$ by bootstrapping. Here the crucial
estimate is
\begin{equation*}
\Vert r^\text{sorp}_i(c_i,\cs_i)\Vert_{\G^\Si_p(T)}\leq C \Vert (c_i,\cs_i)\Vert_{\E^\Om_q(T)\times \E^\Si_q(T)}
\end{equation*} with suitable $q<p$; see the proof of \Thmref{Local-WP}. This, in turn, yields
\begin{equation*}
c_i\in BC(\overline{\Omega_T}), \qquad \cs_i\in BC(\overline{\Sigma_T}), \qquad (T<T^\ast).
\end{equation*} Hence, by plugging in $c$, $\cs$ into $r^\text{sorp}$, $r^\text{ch}$, we may consider (\ref{eq:cat}) again, as a linear problem, this time for data being continuous in time.
More precisely, we consider (\ref{eq:withPerturbation}) for data
\begin{align*}
f_i&\in BC([0,T^*),L^2(\Omega)),\qquad \fs_i \in BC([0,T^*),L^2(\Sigma)),\\
g^\text{in}_i&\in BC([0,T^*),L^2(\Gamma_\text{in})), \quad \gs_i\in BC([0,T^*),L^2(\Sigma)), \\ g^\text{out}_i&\in BC([0,T^*),L^2(\Gamma_\text{out})),
\end{align*} and
\begin{align*}
c_{0,i}\in H^1(\Omega),\qquad \cs_{0,i}\in H^1(\Sigma).
\end{align*} Following the strategy of the proof of our linear result in Section \ref{sec:linear_equations},
in particular by transferring Lemma \ref{lm:Surjectivity-of-Trace} to handle the inhomogeneous boundary data $g^\text{in}_i$, $g^\text{out}_i$ and $\gs_i$
and by employing a semigroup representation, we obtain that the unique solution of (\ref{eq:cat}) satisfies
\begin{align*}
c_i\in BC([0,T^*),H^1(\Omega)), \qquad \cs_i\in BC([0,T^*),H^1(\Sigma)),
\end{align*} and the corresponding a priori estimates
\begin{align}\label{eq:final-BC-estimate}
\sup_{t\in[0,T]}(\Vert c_i(t)\Vert_{H^1(\Om)}+ \Vert \cs_i(t)\Vert_{H^1(\Si)}) \leq M e^{\omega T} \sup_{t\in[0,T]}\big( \Vert f_i(t)\Vert_{L^2(\Om)}
+ \Vert \fs_i(t)\Vert_{L^2(\Si)}\nonumber\\
+  \Vert g^\text{in}_i(t)\Vert_{L^2(\Ga_\text{in})} +\Vert g^\Si_i(t)\Vert_{L^2(\Si)}
+ \Vert g^\text{out}_i(t)\Vert_{L^2(\Ga_\text{out})}  \big), \qquad (T<T^*)
\end{align} for constants $M,\omega>0$ independent of $T$. Hence, we may pass to the limit $T\rightarrow T^\ast$
and see that both sides of (\ref{eq:final-BC-estimate}) stay finite.
The proof of \Thmref{Global-WP} is now complete.
\end{proof}

\begin{remark}[Sorption and reaction examples]\label{rk:revisiting-examples} 
A few remarks on the examples given in the introduction are in order here concerning the assumptions
($A_\text{F}^\text{sorp}$), ($A_\text{M}^\text{sorp}$), ($A_\text{B}^\text{sorp}$),
($A_\text{F}^\text{ch}$), ($A_\text{N}^\text{ch}$), ($A_\text{P}^\text{ch}$), ($A_\text{S}^\text{ch}$)
stated in Section \ref{sec:loc} and \Secref{Global-WP}. Evidently, Henry's law (S1) satisfies all of our assumptions. However, Langmuir's law (S2) needs to be modified in order to meet all assumptions on the sorption.
To this end we introduce $\zeta^+$, a smooth cut-off function, which approximates $(\cdot)^+$ pointwise and $\zeta^B$ a smooth and bounded function with bounded derivatives up to order $2$ which is monotonically increasing. In addition suppose $\zeta^+(0)=0$ and $\zeta^B(0)=0$. Then we consider
\begin{align*}
\tilde{r}^\text{sorp}_{L,i}(c_i,\cs_i)=k^\text{ad}_i\zeta^\text{B}(c_i)\zeta^+\left(1-\frac{\zeta^+(\cs_i)}{\cs_{\infty,i}}\right)-k^\text{de}_i\cs_i,
\end{align*} which indeed satisfies ($A_\text{F}^\text{sorp}$),
($A_\text{M}^\text{sorp}$), ($A_\text{B}^\text{sorp}$) and therefore is
covered by our main results. This modification is only necessary due to
technical reasons, since
\begin{itemize}
\item ($A_\text{F}^\text{sorp}$) is violated due to $\nabla r^\text{sorp}_{L,i} \notin BC^1(\R^2,\R^2)$.
\item ($A_\text{M}^\text{sorp}$) is not guaranteed since $\cs_i\leq \cs_{\infty, i}$ is not postulated.
\item For the same reason ($A_\text{B}^\text{sorp}$) is not satisfied since $(1-\cs_i/\cs_{\infty,i})$ could be negative.
\end{itemize}
For the time being it is not clear to the authors whether there are still global solutions in case we omit the cut-off functions. Observe that in our model there is no maximal capacity on the active surface, which 
is required in the original Langmuir law (S2) to gain nonnegativity of concentrations. Nonnegativity in turn is employed in the proof of the global existence result. 

The reaction rate $r^\text{ch}$ given in (R1) satisfies ($A^\text{ch}_\text{F}$), admits quadratic growth ($A^\text{ch}_\text{P}$), is quasi-positive ($A^\text{ch}_\text{N}$) and respects the triangular structure
($\textrm{A}^{\textrm{ch}}_{\textrm{S}}$) with corresponding matrix
\begin{align*}
Q=\begin{pmatrix}
1&0&0\\
0&1&0\\
1&0&1 \end{pmatrix}.
\end{align*}
\end{remark}

\bibliographystyle{plain}
\bibliography{references}
\end{document}